\newcommand\R{\mathbb{R}}
\newcommand\Z{\mathbb{Z}}
\newcommand\Id{\textup{id}}
\newcommand{\defin}[1]{\textbf{#1}}
\newcommand\acs{\varphi}  				
\newcommand\acxs{J}						
\newcommand\scxs{\zeta}					
\newcommand\bscxs{\bar \zeta}			
\newcommand\im{\textup{Im}}
\newcommand\Num{\textup{Num}}
\newcommand\Denom{\textup{Denom}}
\newcommand\del{\partial}
\newcommand{\an}[1]{\langle{#1}{\rangle}}
\newcommand{\wh}{\widehat}
\newcommand\BN{\mathcal{B}}             
\DeclareMathOperator{\ord}{ord}
\newtheorem{Theorem}{Theorem}[section]
\newtheorem{Definition}[Theorem]{Definition}
\newtheorem{Lemma}[Theorem]{Lemma}
\newtheorem{Proposition}[Theorem]{Proposition}
\newtheorem{Corollary}[Theorem]{Corollary}
\theoremstyle{remark}
\newtheorem{Example}[Theorem]{Example}
\newtheorem{Remark}[Theorem]{Remark}
\newcommand{\jbcomm}[1]{\begingroup\color{green}JB:~#1\endgroup}
\newcommand{\dccomm}[1]{\begingroup\color{blue}DC:~#1\endgroup}
\newcommand{\ascomm}[1]{\begingroup\color{red}AS:~#1\endgroup}
\begin{document}
\title[The topology of Stein fillable manifolds in high dimensions II.]{The topology of Stein fillable manifolds in high dimensions II.}

\author[Bowden]{Jonathan Bowden}
\address{Mathematisches Institut, Universit\"{a}t Augsburg, Universit\"{a}tstr 14, 86159 Augsburg, Germany}
\email{jonathan.bowden@math.uni-augsburg.de}
\author[Crowley]{Diarmuid Crowley}
\address{Intitute of Mathematics, University of Aberdeen, Aberdeen AB24 3UE, United Kingdom}
\email{dcrowley@abdn.ac.uk}
\author[Stipsicz]{Andr\'{a}s I. Stipsicz, with an appendix by Bernd C.~Kellner}
\address{ R\'{e}nyi Institute of Mathematics, Re\'{a}ltanoda u.~13-15., Budapest, Hungary H-1053}
\email{stipsicz@renyi.hu}
\address{Mathematisches Institut, Universit\"{a}t G\"{o}ttingen, Bunsenstr.\ 3–-5, 37073 G\"{o}ttingen, Germany}
\email{bk@bernoulli.org}


\maketitle

\begin{abstract}
We continue our study of contact structures on manifolds of dimension
at least five using surgery theoretic methods.  Particular
applications include the existence of `maximal' almost contact
manifolds with respect to the Stein cobordism relation as well as the
existence of weakly fillable contact structures on the product
$M\times S^2$. 
We also study
the connection between Stein fillability and connected sums: we give
examples of almost contact manifolds for which the connected sum is
Stein fillable, while the components are not.

Concerning obstructions to Stein fillability, we show for all $k >1$
that there are almost contact structures on the $(8k{-}1)$-sphere
which are not Stein fillable.  This implies the same result for all
highly connected $(8k{-}1)$-manifolds which admit almost contact
structures.  The proofs rely on a new number theoretic result about
Bernoulli numbers.
\end{abstract}

\section{Introduction} 
\label{sec:introduction}

One of the driving questions in contact topology was to determine
which smooth closed oriented manifolds $M$ of dimension $2q{+}1$ admit a
\emph{contact structure}, where a (coorientable) contact structure is a
codimension{-}$1$ distribution $\xi$ that is defined as the kernel of a 1-form
$\lambda \in \Omega ^1 (M)$ with the property that $\lambda \wedge
(d\lambda )^q$ is a positive volume form. Since a contact structure
splits the tangent bundle of the $(2q{+}1)$-manifold $M$ as the direct sum
of a trivial real line bundle and a complex $q$-dimensional subbundle,
we need to assume that the manifold in question is already equipped
with such a splitting, called an \emph{almost contact} structure. The
general existence question for almost contact manifolds was recently answered by
Borman{-}Eliashberg-Murphy:
  
  \begin{Theorem}[\cite{BEM}]
  Suppose that $(M, \varphi )$ is a closed oriented  $(2q{+}1)$-dimensional almost 
  contact manifold. Then there is a contact structure on $M$ homotopic to the given almost contact 
  structure.\qed 
  \end{Theorem}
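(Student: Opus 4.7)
The plan is to prove a substantially stronger statement, namely a parametric $h$-principle for a specially chosen class of contact structures (the \emph{overtwisted} ones), and then conclude the existence statement as a corollary. The strategy mirrors Eliashberg's classical three-dimensional argument: first isolate a sufficiently flexible subclass of contact structures so that every homotopy class of almost contact structure is represented by a genuine contact structure within that subclass.

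The first step is to identify a model \emph{overtwisted disk}: a specific piecewise-smooth contact germ on a $(2q{+}1)$-dimensional ball $D_{\mathrm{ot}}$ (built from a suitable suspension of the standard three-dimensional overtwisted disk, or equivalently a plastikstufe with the correct rotation behaviour), together with the data of how it should sit inside an ambient almost contact manifold. A contact structure is then declared overtwisted if it contains an embedded copy of this model. One needs to set up the \emph{formal} counterpart: a pair $(\varphi, \Delta)$ consisting of an almost contact structure $\varphi$ together with an embedded disk $\Delta$ whose almost contact germ already agrees with that of $D_{\mathrm{ot}}$. Any $(M, \varphi)$ admits a formal overtwisted disk after a homotopy of $\varphi$, since the almost contact data on a small ball is unobstructed.

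Next I would establish the main technical result: the inclusion of the space of overtwisted contact structures containing a fixed $D_{\mathrm{ot}}$ into the space of almost contact structures containing a fixed formal overtwisted disk is a weak homotopy equivalence. This is proved by a Gromov-style argument. One shows that the sheaf of contact structures on the complement of the overtwisted disk is \emph{microflexible} in the sense of Gromov, and that microflexibility upgrades to the genuine $h$-principle once the extra flexibility provided by $D_{\mathrm{ot}}$ is available. Concretely, the overtwisted disk lets one absorb the obstruction that normally prevents contact structures from satisfying an unrelative $h$-principle, by allowing isotropic discs in a neighbourhood of $D_{\mathrm{ot}}$ to be loosened, so that any formal contact isotopy can be $C^{0}$-approximated by a genuine one away from a neighbourhood of $D_{\mathrm{ot}}$. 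Once the weak equivalence is in hand, the theorem follows by applying it on $\pi_0$ to the almost contact structure $\varphi$ (after first homotoping $\varphi$ to contain a formal overtwisted disk).

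The main obstacle, by a wide margin, is the microflexibility-to-flexibility step, i.e.\ proving that the overtwisted disk really does enable the $h$-principle. This requires constructing explicit families of contact deformations supported near $D_{\mathrm{ot}}$ that can cancel the holonomy defects arising in a holonomic approximation argument. In dimension three this is essentially Eliashberg's disc-filling technique, but in higher dimensions one must replace discs by isotropic submanifolds and control the Reeb dynamics and the rotation of the contact hyperplane field in neighbourhoods of these submanifolds simultaneously. I would expect the bulk of the work to go into (i) a careful existence theorem for isotropic setups that can be made overtwisted, and (ii) a detailed convex integration or wrinkling argument showing that these can be interpolated with the original formal data. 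All other ingredients (the reduction from existence to the parametric $h$-principle, and the production of formal overtwisted disks) are comparatively soft.
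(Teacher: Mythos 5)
This statement is not proved in the paper: it is quoted verbatim from Borman--Eliashberg--Murphy and marked with a \(\qed\) precisely because the paper treats it as a black box (the paper's own contributions concern \emph{fillable} structures, which lie at the opposite end of the flexibility spectrum). So there is no ``paper's own proof'' to compare against. What you have written is a reasonable high-level summary of the actual strategy of~\cite{BEM}: define a model overtwisted disk in all odd dimensions, set up the formal analogue, prove a parametric \(h\)-principle for overtwisted contact structures, and deduce existence on \(\pi_0\) after homotoping the given almost contact structure to contain a formal overtwisted disk. That outline is faithful to~\cite{BEM}.

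However, as a proof proposal it has a genuine gap, and you already point to it yourself: the entire mathematical content of~\cite{BEM} lives in the step you defer --- showing that the presence of an overtwisted disk upgrades microflexibility to a full \(h\)-principle. Gromov-style microflexibility for contact structures is standard and by itself yields nothing, because contact structures are an open but not closed differential relation and the naive \(h\)-principle for them is false in the closed case. The assertion that ``isotropic discs near \(D_{\mathrm{ot}}\) can be loosened so that any formal contact isotopy is \(C^0\)-approximated by a genuine one'' is precisely the theorem, not a tool one can invoke. In~\cite{BEM} this is carried out through a quantitative theory of contact germs on balls (the notion of domination of germs, the saucer decomposition, and explicit connectivity/filling lemmas in dimension three that are then propagated to higher dimensions), none of which appears in your sketch. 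Also, the correct model of overtwistedness in~\cite{BEM} is \emph{not} a suspension of the three-dimensional overtwisted disk nor a plastikstufe; it is a different, purpose-built piecewise-smooth disk germ, and the equivalence with plastikstufe-type objects was only established afterwards (by Casals--Murphy--Presas and Huang). So while your roadmap points in the right direction, it does not constitute a proof, and the one step you flag as the ``main obstacle'' is where essentially all of the work of~\cite{BEM} resides.
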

  
  Indeed, the construction of \cite{BEM} provides a contact structure which contains an \emph{overtwisted disk} (cf.\,\,\cite[Section~2.5]{BEM}),
  so in particular it is not symplectically fillable in any sense. (For various notions of 
  symplectic fillability, see Section~\ref{subsec:fillandsurg} and \cite{Massot12}.)
  For this reason, constructions of fillable structures, and obstructions for their existence
  seem essential in an effort to understand all contact structures (up to 
  contactomorphism or contact isotopy) on a given almost contact $(2q{+}1)$-manifold.
  Such complete classifications are available for some
  classes of 3-dimensional manifolds, although the complete picture is still to be discovered even in that dimension.
  
The strongest fillability notion is provided by \emph{Stein fillability}. 
Recall that a compact complex
manifold $W$ is a  \emph{Stein domain}
if it  admits a strictly plurisubharmonic function for which the
boundary is a regular level set. According to Eliashberg's
characterization, a $2n$-manifold with $n\geq 3$ admits a Stein
structure if and only if it admits an almost complex structure and a
handle decomposition with handles of index at most $n$ 
\cite{Cieliebak&Eliashberg12, Eliashberg??}. A Stein
structure on $W$ naturally induces a contact structure on $M=\partial
W$, and contact structures presentable in this way are called
\emph{Stein fillable}. 
Using the above topological characterization of Stein domains, 
modified surgery theory can be fruitfully applied in studying Stein fillability as in \cite{BCS2}. 

This topological characterization of Stein domains easily generalizes to 
cobordisms, providing 
the relation of
\emph{topological Stein cobordism} for almost contact manifolds: Two
almost contact manifolds $(M_0,\varphi _0)$ and $(M_1,\varphi_1)$ are in
this relation if there is an almost complex cobordism between them
that is compatible with $\varphi _0$ and 
$\varphi _1$ on the two ends, and admits a relative handle decomposition with
handles of index at most half the dimension when built on
$M_0\times [0,1]$. For convenience we write
$(M_0,\varphi _0) \prec (M_1,\varphi_1)$ in this case.  (Notice that
this relation is not symmetric.)  

A surprising application of the surgery theoretic approach to 
existence problems in contact topology provides the following result
about topological Stein cobordisms.
(For a more precise statement, see Proposition~\ref{prop:final_elements}.)

\begin{Theorem}\label{thm:maximalelement}
For a fixed dimension $2q{+}1\geq 5$ there is an almost contact
$(2q{+}1)$-manifold $(M _{max}, \varphi _{max})$ such that for any
almost contact $(2q{+}1)$-manifold $(M, \varphi )$ we have
\[
(M,\varphi ) \prec (M_{max},\varphi_{max}).
\]
\end{Theorem}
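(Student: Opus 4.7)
The strategy is to reduce the statement to the bordism-theoretic machinery developed in \cite{BCS2}, and then to invoke finite generation of the relevant bordism group in order to exhibit $(M_{max}, \varphi_{max})$ as an explicit connected sum of a finite list of representatives.

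First I would record the basic formal properties of $\prec$. It is a preorder on almost contact $(2q{+}1)$-manifolds, compatible with connected sum: if $(M, \varphi) \prec (M', \varphi')$, then for any almost contact $(N, \psi)$ one has $(M \# N, \varphi \# \psi) \prec (M' \# N, \varphi' \# \psi)$. The special case $(\emptyset) \prec (N, \psi)$ recovers the Stein fillability of $(N, \psi)$, and a $1$-handle attached to $M \times [0,1]$ (which has index $1 \leq q+1$) produces $(M, \varphi) \prec (M \# N, \varphi \# \psi)$ whenever $(N, \psi)$ is Stein fillable.

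The core input is the bordism-theoretic characterization from \cite{BCS2}: the existence of a topological Stein cobordism from $(M_0, \varphi_0)$ to $(M_1, \varphi_1)$ is controlled by a class in a finitely generated abelian group $\mathcal{B}_{2q+1}$, and this class vanishes precisely when $[M_1] - [M_0]$ is representable by a Stein fillable manifold. I would then pick finitely many almost contact manifolds $(N_i, \psi_i)$, together with their orientation-reverses $(\overline{N}_i, \overline{\psi}_i)$, whose classes generate $\mathcal{B}_{2q+1}$, and define $(M_{max}, \varphi_{max})$ as the connected sum of these generators with multiplicity high enough that every class of $\mathcal{B}_{2q+1}$ arises as a subsum. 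Given an arbitrary $(M, \varphi)$, the class $[M_{max}] - [M] \in \mathcal{B}_{2q+1}$ is then represented by a Stein fillable manifold $(P, \chi)$, so that $(M, \varphi) \prec (M \# P, \varphi \# \chi)$ by a $1$-handle attachment followed by a Stein filling of $P$, and $M \# P$ realizes the same bordism class as $M_{max}$. The final identification $(M \# P, \varphi \# \chi) \prec (M_{max}, \varphi_{max})$ follows from the characterization of \cite{BCS2} once both sides represent the same class in $\mathcal{B}_{2q+1}$.

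The principal difficulty is precisely this last step: translating the algebraic equality $[M_{max}] = [M] + [P]$ in the bordism group into an honest topological Stein cobordism, with handles of index at most $q+1$ and a compatible almost complex structure extending $\varphi$ and $\varphi_{max}$ on the two ends. This is the technical heart of the bordism-theoretic reformulation of \cite{BCS2}, and the argument ultimately depends on a careful handle-trading analysis together with the finiteness of $\mathcal{B}_{2q+1}$ modulo the semigroup of Stein fillable classes, which is what makes a single $(M_{max}, \varphi_{max})$ suffice.
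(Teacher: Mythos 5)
Your proposal does not work and misses the key idea. You posit a single finitely generated abelian group $\mathcal{B}_{2q+1}$ that "controls" the Stein cobordism relation, and then try to realize a maximal element as a connected sum of generators. But there is no such universal group: the obstruction class of \cite{BCS2} lives in $\Omega_{2q{+}1}(B^{q{-}1}_{\zeta_1};\eta^{q{-}1}_{\zeta_1})$, a bordism group that depends on the complex normal $(q{-}1)$-type of the \emph{target} manifold, and this normal type varies with the almost contact structure $\zeta_1$. Two almost contact manifolds with different normal $(q{-}1)$-types do not lie in a common bordism group, so there is no sense in which $[M_{max}]-[M]$ is a well-defined difference that you can try to cancel. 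As a result, your "connected sum of generators with high multiplicity" strategy and the appeal to finite generation never get off the ground; the hard step you flag at the end (turning a bordism equality into a topological Stein cobordism with the correct handle structure) is not just a technicality you are deferring — it is where the argument would need a mechanism you have not supplied.

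The paper's proof works by making a single clever choice that collapses this entire problem. One builds $(M_{max},\zeta_{max})$ so that its complex normal $(q{-}1)$-type is $BU$ itself, with $\eta^{q{-}1}_{\zeta_{max}}=\mathrm{id}$: start with any stably complex $(2q{+}1)$-manifold and perform surgery below the middle dimension to make the normal map a $q$-equivalence, then destabilize to an almost contact structure via \cite[Lemma 2.17]{BCS2}. Then, for an arbitrary $(M,\varphi)$ with normal $(q{-}1)$-type $(B^{q{-}1}_\zeta,\eta^{q{-}1}_\zeta)$, the fibration $\eta^{q{-}1}_\zeta\colon B^{q{-}1}_\zeta\to BU$ itself serves as the map $\alpha$ in Theorem~\ref{thm:surgery_theorem_alternative}, pushing $[M,\bar\zeta]$ into $\Omega^U_{2q{+}1}$. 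Since the odd complex bordism groups vanish (\cite[p.~117]{Stong1}), one gets $\alpha_*[M,\bar\zeta]=0=[M_{max},\bar\zeta_{max}]$ for free, and Theorem~\ref{thm:surgery_theorem_alternative} converts this bordism equality into the required topological Stein cobordism — precisely the step you identified as the difficulty, and which is handled here not by handle-trading ad hoc but by the general surgery theorem. In short, the maximality of $M_{max}$ is not achieved by accumulating generators but by choosing the \emph{terminal} normal type, so that every almost contact manifold maps to its bordism theory and the relevant group is trivial.
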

\noindent This theorem should be compared with a result of
Etnyre-Honda~\cite{EtHond}, showing that in dimension \emph{three} there are \emph{initial} contact manifolds so that $(M_{min}, \xi _{min})$ is
Stein cobordant to any other contact manifold $(M, \xi )$. In the case
of almost contact $5$-manifolds whose almost contact structures have
vanishing first Chern class, one can even take
$(M_{max},\varphi_{max}) = (S^5,\varphi_{std})$ (cf.\ Proposition
\ref{Calabi_Yau}).

The notion of topological Stein cobordism introduced above
allows for the following interpretation of the main result of \cite{BCS1}.
 Recall that according to a result of Bourgeois \cite{Bou},  
for a
contact manifold $(M, \xi )$ the product $M\times T^2$ carries an 
almost contact structure $\varphi _{T}$ which can be represented by a contact
structure $\xi _{T}$. By \cite[Example~5]{Massot12}, if $(M, \xi )$ is weakly fillable
then so is $\xi _{T}$. Since the main result of 
\cite{BCS1} shows that for some appropriately chosen almost contact structure
$\varphi _{S}$ on $M\times S^2$ we have 
$(M\times T^2,  \varphi _T) \prec (M\times S^2, \varphi _S)$, this yields
the following
variant of the main result of \cite{BCS1}:

\begin{Theorem}\label{thm:mandmtimess2}
Suppose that a contact manifold $(M, \xi )$ 
admits a weak symplectic filling $(W, \omega )$.
Then the product $M\times S^2$ admits a
weakly fillable contact structure.
\end{Theorem}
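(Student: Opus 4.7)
The plan is to combine three ingredients already assembled in the excerpt: Bourgeois's contact structure on $M \times T^2$, the topological Stein cobordism from $M \times T^2$ to $M \times S^2$ furnished by the main result of \cite{BCS1}, and a gluing lemma which allows one to attach a Stein cobordism on top of a weak symplectic filling without destroying weak fillability.

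First I would apply Bourgeois's construction \cite{Bou} to the contact manifold $(M,\xi)$ to obtain a contact structure $\xi_T$ on $M \times T^2$ representing the almost contact class $\varphi_T$. By \cite[Example~5]{Massot12} the weak filling $(W,\omega)$ of $(M,\xi)$ then produces a weak filling $(W',\omega')$ of $(M \times T^2, \xi_T)$; explicitly one can take $W' = W \times T^2$ equipped with $\omega' = \omega + \pi^{*}\omega_{T^2}$, where $\omega_{T^2}$ is an area form on the torus and $\pi \colon W \times T^2 \to T^2$ the projection.

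Next, the main result of \cite{BCS1} yields an almost contact structure $\varphi_S$ on $M \times S^2$ together with a topological Stein cobordism $(M \times T^2, \varphi_T) \prec (M \times S^2, \varphi_S)$. Invoking Eliashberg's theorem in the relative form \cite{Cieliebak&Eliashberg12}, this topological cobordism carries an honest Stein cobordism $(X,J)$ whose negative boundary is $(M \times T^2, \xi_T)$ with the \emph{prescribed} contact structure and whose positive boundary is $(M \times S^2, \xi_S)$ with $\xi_S$ a contact structure in the homotopy class $\varphi_S$.

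The final step is to glue: set $\wh W := W' \cup_{M \times T^2} X$ and equip it with a symplectic form that weakly dominates $\xi_S$. Near the negative boundary the Stein cobordism carries a Liouville primitive $\lambda_X$ with $d\lambda_X = \omega_X$ and $\lambda_X|_{\xi_T}$ a contact form for $\xi_T$; using a cutoff function in a collar neighborhood of $M \times T^2 \subset W'$ one interpolates between $\omega'$ and $\omega_X$, modifying $\omega'$ only by a form that is exact on the collar and whose contribution can be absorbed by rescaling the Liouville part (cf. the standard gluing procedure used in \cite{Massot12}). The resulting closed two-form on $\wh W$ is symplectic and restricts to a positive symplectic form on $\xi_S$ at $M \times S^2$, giving the desired weak filling.

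The principal obstacle is the gluing step. In dimensions beyond three a weak filling cannot in general be deformed to a strong filling near the boundary, so one cannot simply concatenate Liouville data. The interpolation must be organized so that the weak filling condition at the new top boundary is inherited directly from the Stein end of $X$, while the modification of the form in the collar near $M \times T^2$ remains compatible with $\omega'$. Provided this standard construction goes through as in \cite{Massot12}, the theorem follows.
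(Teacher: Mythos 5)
Your outline correctly identifies the three ingredients (Bourgeois's structure on $M\times T^2$, the BCS1 topological Stein cobordism to $M\times S^2$, and a gluing step), which is the same skeleton as the paper's proof. But the final paragraph, where you acknowledge that the gluing is the obstacle and then close by asserting ``provided this standard construction goes through\dots,'' is precisely where the actual content of the proof lies, and the construction is \emph{not} standard. The proposed interpolation does not work as stated: the form $\omega' = \omega \oplus \omega_{T^2}$ is cohomologically nontrivial on $M\times T^2$, while the Stein cobordism $X$ carries an \emph{exact} symplectic form $\omega_X$. One cannot cut off from $\omega'$ to $\omega_X$ with an exact correction across a collar, because that would require $[\omega'|_{M\times T^2}] = 0$; the weak condition does not force this, and here it genuinely fails. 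More fundamentally, even granting that one should instead extend the non-exact cohomology class over $X$, the real difficulty is 3-handles: as Lemma~\ref{weak_fill} makes explicit, a Stein $3$-handle can be attached to a weak filling only if the symplectic form vanishes on the homology class of the attaching $2$-sphere, and this does \emph{not} hold automatically.

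The paper's proof deals with this by working handle by handle. It first normalizes the handle decomposition of the bordism $Y$ from $M\times T^2$ to $M\times S^2$ (killing $\pi_1(T^2)$ with $2$-handles, and applying Wall's rearrangement so all index-$\le 1$ handles are traded for $3$-handles). It then applies Lemma~\ref{weak_fill} inductively, and the key invariant tracked through each step is the cohomology class of the ambient symplectic form restricted to the outgoing boundary, which is shown at every stage to be $(g)^*\bigl([\omega|_M \oplus \omega_{S^2}]\bigr)$ pulled back along the $(q{+}2)$-equivalence $g_Y$. This cohomological control is exactly what guarantees the vanishing condition for each $3$-handle attaching sphere (because those spheres die in $H_2(M\times S^2)$). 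None of this bookkeeping appears in your sketch, and without it the gluing step cannot be completed; so while the strategy is right, the proof as written has a genuine gap at the one point that makes the theorem nontrivial.
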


\begin{Remark}
Note that while all our other statements are concerned with 
\emph{almost} contact structures on manifolds, Theorem~\ref{thm:mandmtimess2}
is about genuine contact structures.
\end{Remark}

We now move from products to connected sums.
The connected sum of two 3-manifolds is Stein fillable if and only if
both 3-manifolds are Stein fillable \cite{Eliashberg90}. 
Recall that in higher dimensions, the diffeomorphism types of components of a connected 
sum are only
well-defined up to connect summing with homotopy spheres. In contrast
to dimension three, we have the following result.

\begin{Theorem}\label{thm:mainconnsum}
Let $M = ST^*S^{2k{+}1}$ be the unit cotangent bundle of the
$(2k{+}1)$-sphere.  For every odd $k \geq 5$, $M$ admits an almost
contact structure $\varphi$ such that $(M,\varphi) \# (-M,-\varphi)$
admits a Stein fillable contact structure.  However, for every
almost contact homotopy $(4k{+}1)$-sphere $(\Sigma, \acs_\Sigma)$,
neither $(M \# \Sigma, \varphi \# \varphi_{\Sigma})$ nor 
$\bigl(-(M \# \Sigma), -(\varphi \# \varphi_{\Sigma}) \bigr)$ is Stein fillable.
%
%
%
\end{Theorem}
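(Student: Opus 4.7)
My plan is to treat the two halves of Theorem~\ref{thm:mainconnsum} independently. For the \emph{positive} part I would use an explicit doubling construction that in fact works for \emph{any} almost contact $\varphi$ on $M$. Let $M' := M \setminus B$ where $B$ is a small open ball in $M$, and set $W := M' \times [0,1]$. After smoothing corners $\partial W$ is the double of $M'$, which is $M \# (-M)$. The sphere bundle structure $S^{2k} \to M \to S^{2k+1}$ admits a Morse function — for instance a sum of a pulled-back height function on the base and a fibrewise one — with critical points of indices in $\{0, 2k, 2k{+}1, 4k{+}1\}$; the unique top-index handle can be taken to be a neighbourhood of $B$, so $M'$ inherits a handle decomposition with handles of index $\leq 2k{+}1$, and therefore so does $W$. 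In particular all handles of $W$ have index $\leq 2k{+}1 = \tfrac12\dim W$. Meanwhile, writing $\varphi = (\xi, J_\xi)$ we have $TM \oplus \trivR \cong \xi \oplus \C$, so $M \times \R$ is almost complex; restricting to $W \subset M \times \R$ and checking boundary conventions shows that the induced almost contact structure on $\partial W$ is homotopic to $\varphi \# (-\varphi)$. As $\dim W \geq 22$, Eliashberg's theorem then produces a Stein structure on $W$ realising $\varphi \# (-\varphi)$ on its contact boundary.

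For the \emph{negative} part I would appeal to the bordism-theoretic obstruction $\FU$ of \cite{BCS2}: for an almost contact $(4k{+}1)$-manifold $(N,\psi)$, Stein fillability forces $\FU(N,\psi) = 0$ in a certain bordism group; moreover $\FU$ is additive under connected sum and satisfies $\FU(-N,-\psi) = -\FU(N,\psi)$. Let $\mathcal{S}$ be the subgroup generated by the classes $\FU(\Sigma,\varphi_\Sigma)$ as $(\Sigma,\varphi_\Sigma)$ ranges over almost contact homotopy $(4k{+}1)$-spheres; in particular $-\mathcal{S} = \mathcal{S}$. Provided we can choose $\varphi$ on $M$ with $\FU(M,\varphi) \notin \mathcal{S}$, then for every $(\Sigma,\varphi_\Sigma)$ the class
\[
\FU(M \# \Sigma,\,\varphi \# \varphi_\Sigma) \;=\; \FU(M,\varphi) + \FU(\Sigma,\varphi_\Sigma)
\]
lies in the coset $\FU(M,\varphi) + \mathcal{S}$, which cannot contain $0$; hence $(M \# \Sigma, \varphi \# \varphi_\Sigma)$ is not Stein fillable. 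The same argument applied to $-\FU$ handles the orientation-reversed case.

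The hard part is therefore to exhibit $\varphi$ with $\FU(M,\varphi) \notin \mathcal{S}$. The almost contact structures on $M$ form an affine space governed by classes in $H^*(M;\Z)$, and one can compute how $\FU(M,\varphi)$ varies over this space using the cell structure of $M = ST^*S^{2k+1}$ together with standard Chern-class data. The subgroup $\mathcal{S}$, by contrast, is controlled by the image of the $J$-homomorphism, whose orders are governed by denominators of Bernoulli numbers. The crucial input is the number-theoretic statement supplied by Kellner's appendix: for odd $k \geq 5$, the Bernoulli-number denominator attached to $M$ is strictly finer than those produced by homotopy spheres. Matching this Bernoulli-number estimate to the specific obstruction computation is where the bulk of the work — and the hypothesis ``$k \geq 5$ odd'' — resides; everything apart from this number-theoretic step is expected to be fairly routine bordism and characteristic-class bookkeeping.
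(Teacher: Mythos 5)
Your positive half follows the paper's argument essentially verbatim: set $W = M^\bullet \times [0,1]$ with $M^\bullet = M \setminus B^{4k+1}$, observe that the boundary after smoothing is $(M,\varphi)\#(-M,-\varphi)$, use a handle decomposition of $M^\bullet$ of index $\leq 2k+1$ (the paper cites Smale \cite[Theorem C]{Smale61} rather than building one from the sphere bundle structure, but the content is the same), and invoke Eliashberg's $h$-principle. No issues there.

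Your negative half, however, takes a genuinely different route from the paper, and more importantly misidentifies the key input. The paper does \emph{not} run the bordism obstruction $[M,\bscxs]$ against a subgroup of homotopy-sphere classes; those bordism group computations are explicitly deferred to \cite{BCS3}. Instead it argues directly on the topology of any Stein filling. Lemma~\ref{lem:c_k} shows that for $k$ odd there is an almost contact $\varphi$ on $ST^*S^{2k+1}$ with $c_k(\varphi) \neq 0$ (the input is Bott periodicity: $\pi_{4k}(SO/U)=0$ when $k$ is odd, so any map on $M^\bullet \simeq S^{2k}\vee S^{2k+1}$ extends, and the boundary map $\pi_{2k}(SO/U) \to \pi_{2k-1}(U)$ is onto). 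Proposition~\ref{prop:filling_cotangent_spheres} then shows that any Stein filling $W$ of $(ST^*S^{2k+1})\#\Sigma_0$ decomposes as $W_l \natural W_\Sigma$ with $H_{2k}(W_l)=\Z/l\Z$, and that $l=0$ (equivalently $H^{2k}(W)$ infinite) forces $k\in\{1,3\}$ — the input here is Milnor--Spanier's theorem that $ST^*S^{2k+1}$ is fibre-homotopy trivial exactly for $k=0,1,3$, together with handle cancelling. Since $c_k(\varphi)\neq 0$ pulls back from $c_k(J)\in H^{2k}(W)$, any filling would have $H^{2k}(W)$ infinite, which is impossible for $k\geq 5$.

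The concrete gap: your claim that ``the crucial input is the number-theoretic statement supplied by Kellner's appendix'' and that ``the hypothesis $k\geq 5$ odd resides'' in a Bernoulli-number estimate is wrong. Kellner's Theorem~\ref{thm:bernoulli} plays no role in the proof of Theorem~\ref{thm:mainconnsum}; it enters only in Theorem~\ref{thm:Yang+} (existence of stable almost complex structures on $(4k-1)$-connected $8k$-manifolds) and thence Theorem~\ref{thm:wackyspheres} about $S^{8k-1}$. The ``odd'' in $k\geq 5$ odd comes from $\pi_{4k}(SO/U)=0$, and the ``$\geq 5$'' comes from excluding the fibre-homotopy-trivial cases $k=1,3$ in Milnor--Spanier. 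Your bordism scheme is not a priori wrong, but you would need to compute the relevant groups $\Omega_{4k+1}(B^{2k-1}_\zeta;\eta^{2k-1}_\zeta)$ and locate the image of the homotopy-sphere classes inside them — none of which is done in this paper — and the Bernoulli input you reach for wouldn't close that gap.
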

%


Dimension five appears to be 
intermediate between dimension three and higher dimensions,
with regard to the Stein fillability of the summands of a Stein fillable
connected sum.
In dimension five, there are no exotic spheres, and 
if $(M, \acs)$ is an almost contact manifold where $M$ is a connected sum
$M = M_0 \# M_1$, then $(M, \acs) = (M_0, \acs) \# (M_1, \acs_1)$ for almost
contact structures $\acs_i$ on $M_i$ which are uniquely defined up to homotopy
(see Lemma \ref{lem:5d-almost-contact-disconnected-sum}).
By abuse of notation we let
$c_1(\varphi) \colon \pi_2(M) \to \Z = \pi_2(BU)$ denote the evaluation homomorphism given by the first Chern class of the almost contact structure $\varphi$. We then have the following
analogue of Eliashberg's theorem.

\begin{Theorem}\label{thm:5fill}
Let $(M, \acs) = (M_0 \# M_1, \acs_0 \# \acs_1)$ be a Stein fillable almost contact $5$-manifold.
Assume that either $c_1(\acs) = 0$ or that
\[ c_1(\acs)(\pi_2(M_0)) = c_1(\acs)(\pi_2(M_1)) = \Z = \pi_2(BU).\]
Then both $(M_0, \acs_0)$ and $(M_1, \acs_1)$ are Stein fillable.
%
%
%
\end{Theorem}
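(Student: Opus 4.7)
The plan is to apply the bordism-theoretic obstruction to Stein fillability from \cite{BCS2}: to an almost contact $5$-manifold $(N, \psi)$ one attaches an obstruction class $\mathcal{O}(N, \psi)$ living in a bordism group $\Omega_5^{B(N,\psi)}$ for the normal 2-type $B(N, \psi)$, with $\mathcal{O}(N, \psi) = 0$ equivalent to Stein fillability. In dimension $5$ the normal 2-type is essentially captured by $c_1(\psi)$ (after reduction to the simply-connected case via covers and Lemma~\ref{lem:5d-almost-contact-disconnected-sum}). I denote the normal 2-types of $(M_i, \varphi_i)$ and $(M, \varphi)$ by $B_i$ and $B$ respectively.

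The first substantive step is additivity of the obstruction under connected sum. The natural 1-handle cobordism
\[
(M_0 \sqcup M_1, \varphi_0 \sqcup \varphi_1) \; \prec \; (M_0 \# M_1, \varphi_0 \# \varphi_1)
\]
is subcritical (a single index-$1$ handle in dimension $6$) and preserves the $B$-structure, so it induces a $B$-bordism. Combined with the inclusions $M_i \setminus D^5 \hookrightarrow M$ and the compatibility of normal 2-types under such inclusions, this yields
\[
\mathcal{O}(M, \varphi) \; = \; \iota_{0*}\mathcal{O}(M_0, \varphi_0) + \iota_{1*}\mathcal{O}(M_1, \varphi_1)
\]
in $\Omega_5^B$, where $\iota_{i*} \colon \Omega_5^{B_i} \to \Omega_5^B$ are the maps induced by the natural morphisms $B_i \to B$ of normal 2-types. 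By hypothesis the left-hand side vanishes.

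The remaining task is to use the $c_1$ hypotheses to deduce that each individual $\mathcal{O}(M_i, \varphi_i)$ vanishes. In the case $c_1(\varphi) = 0$, the splitting $H^2(M) = H^2(M_0) \oplus H^2(M_1)$ (simply connected case) implies $c_1(\varphi_i) = 0$, so each $B_i$ reduces to an SU-type classifier; the images $\iota_{0*}(\Omega_5^{B_0})$ and $\iota_{1*}(\Omega_5^{B_1})$ then sit in complementary summands of $\Omega_5^B$ coming from the two $\pi_2$-blocks, forcing each summand to vanish independently. In the surjectivity case, for each $i$ there is an embedded $2$-sphere $S_i \subset M_i$ (existence by general position, since $2 \cdot 2 < 5$) on which $c_1(\varphi_i)$ evaluates to $1$; this sphere provides a retraction $B \to B_i$ of normal 2-types splitting $\iota_i$, and applying the retraction to the sum equation isolates $\mathcal{O}(M_i, \varphi_i) = 0$.

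The main obstacle will be constructing the retraction of normal 2-types from the embedded 2-sphere in the surjectivity case: one must lift the splitting of $c_1$ to a splitting of the full $2$-type, verifying compatibility of higher Postnikov invariants. The $c_1 = 0$ case should be more elementary and reduce to a calculation in SU-bordism, but the general surjective case will likely require a careful analysis of the second $k$-invariant and of how it transforms under the connected-sum operation.
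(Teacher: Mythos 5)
Your general strategy---use the bordism-theoretic obstruction to Stein fillability, exploit additivity under connected sum, then project to isolate each summand's obstruction---is indeed the paper's approach, and the $1$-handle bordism you invoke for the additivity formula is essentially the paper's $5$-handle attachment along the connect-sum locus (after turning the cobordism upside down).  However several of your key claims are either wrong or unjustified.

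First, you consistently misidentify where the difficulty lives.  For a $5$-manifold the relevant invariant is the complex normal $1$-type $B^1_\zeta$, not a ``normal $2$-type,'' and it is controlled by $c_1$ together with the \emph{fundamental group}: the paper identifies $B^1_\zeta \cong BG \times \bigl(K(\pi_1 M_0,1) \vee K(\pi_1 M_1,1)\bigr)$ with $G = SU$ or $U$ according to the two hypotheses.  There is no reduction to the simply-connected case here and no justification for one---the argument must (and does) run with $\pi_1$ present.  Your appeal to ``$H^2(M) = H^2(M_0) \oplus H^2(M_1)$ (simply connected case)'' is therefore a gap: the theorem makes no simple-connectivity assumption.

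Second, the retraction you use in the surjective case has nothing to do with an embedded $2$-sphere.  The retraction $B^1_\zeta \to B^1_{\zeta_1}$ is simply the collapse $K(\pi_1 M_0,1)\vee K(\pi_1 M_1,1) \to K(\pi_1 M_1,1)$ crossed with the identity on the $BU$ or $BSU$ factor.  The role of the surjectivity hypothesis $c_1(\acs)(\pi_2(M_i)) = \Z$ is entirely different: it guarantees that each $B^1_{\zeta_i}$ really is the full $BU \times K(\pi_1 M_i,1)$, so that the collapse is a map of the correct normal $1$-types.  Your proposed sphere does not ``split'' anything, and there is no hidden Postnikov invariant to track, since the normal $1$-type is given explicitly as a product.

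Third, and most importantly, you never state the fact that actually makes the cancellation work: $\Omega_5^{SU} = \Omega_5^{U} = 0$.  After applying the collapse, the class $[M_0, \bar\zeta_0]$ lands in a copy of $\Omega_5^{SU}$ or $\Omega_5^{U}$ (since $\mathrm{pr}_{K_1}\circ \bar\zeta_0$ is null-homotopic), and it is precisely the vanishing of these groups that forces $[M_0, \bar\zeta_0] = 0$ and hence $[M_1, \bar\zeta_1] = 0$.  Your ``complementary summands'' heuristic is only correct \emph{because} this bordism group vanishes, a fact you do not use.  Without it, the two images overlap in the absolute summand $\Omega_5^G$, and the argument fails.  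So while your overall roadmap matches the paper's, the concrete steps as written do not carry the proof.
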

We next consider Stein fillability
of almost contact structures on  spheres. Let $(S^{2q{+}1}, \zeta_{std})$ be
the {\em standard} stable almost contact structure on the $(2q{+}1)$-dimensional sphere, which is induced by the Stein $(2q{+}2)$-disk.
When $2q{+}1 = 8k{-}1$, basic
obstruction theory shows that 
$S^{8k{-}1}$ has two stable almost contact structures, $\zeta _{std}$ and 
a non{-}standard or \emph{exotic} stable almost contact structure $\zeta _{ex}$.
The exotic structure $\zeta _{ex}$ is
harder to visualize than $\zeta_{std}$ (see Section~\ref{subsec:explicit} for
a description when $k > 1$).  It follows from \cite{BCS2, Geiges93} that 
$(S^7, \zeta_{ex})$ can be represented by a Stein fillable contact structure. 
In contrast, for $8k{-}1>7$ we have 

\begin{Theorem} \label{thm:wackyspheres}
The exotic stable almost contact structure $\zeta _{ex}$ on 
$S^{8k{-}1}$ cannot be represented by a Stein fillable contact structure once $k\geq 2$.
\end{Theorem}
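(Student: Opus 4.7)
The plan is to apply the bordism-theoretic obstruction to Stein fillability developed in our earlier paper \cite{BCS2}, which assigns to an almost contact manifold $(M,\varphi)$ a class whose vanishing is necessary for $(M,\varphi)$ to admit a Stein fillable representative. The goal is to show that this obstruction is non-zero for $(S^{8k-1}, \zeta_{ex})$ whenever $k \geq 2$.

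The first step is to exploit the fact that $S^{8k-1}$ already bounds the standard Stein $8k$-disk, whose induced contact structure is $\zeta_{std}$. This reduces the computation of the obstruction for $\zeta_{ex}$ to a difference calculation: any hypothetical Stein filling $W$ of $(S^{8k-1}, \zeta_{ex})$ can be closed up to the almost complex $8k$-manifold $X := W \cup (-D^{8k})$, and the obstruction records a specific characteristic number of $X$ modulo the subgroup of characteristic numbers realised by closed almost complex $8k$-manifolds with a handle decomposition of index at most $4k$. The exotic-versus-standard comparison then reduces the problem to evaluating a canonical characteristic class on a generator representing the difference class $\zeta_{ex} - \zeta_{std}$ in the appropriate homotopy group of $BU$.

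The second step is to identify this evaluation explicitly. Through the Chern character and Todd-class integrality, the generator of the exotic class is detected by a rational expression involving a specific Bernoulli number, normalised by Milnor--Kervaire-style denominators governing the image of the $J$-homomorphism in the relevant stem. Consequently, Stein fillability of $(S^{8k-1}, \zeta_{ex})$ becomes equivalent to the integrality of a concrete Bernoulli-number expression, up to contributions absorbed by the subgroup of fillable classes.

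The third and most delicate step is the number theoretic input. The required integrality must fail for every $k \geq 2$, while it holds for $k=1$, the latter being needed for consistency with the Stein fillability of $(S^7, \zeta_{ex})$ from \cite{BCS2, Geiges93}. This is the content of the appendix by Kellner, which provides precisely the needed non-divisibility statement about Bernoulli numbers. The main obstacle is thus this arithmetic step: once the machinery of \cite{BCS2} together with the classical integrality relations is in hand, the reduction from geometry to a divisibility question is formal, but proving that the obstruction genuinely fails to vanish for \emph{every} $k \geq 2$ (rather than merely generically) is the non-trivial content deferred to the appendix.
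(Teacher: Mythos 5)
Your proposal takes a genuinely different route from the paper's, and unfortunately it has a sign error built into the arithmetic step that would prevent it from closing.

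The paper does \emph{not} prove Theorem~\ref{thm:wackyspheres} by computing the bordism obstruction class $[S^{8k-1},\bar\zeta_{ex}]$ and detecting it via a characteristic number. Instead it argues directly: if $(W,J)$ were a Stein filling with $\partial J$ stabilizing to $\zeta_{ex}$, then the closed-up manifold $X = W \cup D^{8k}$ is a $(4k-1)$-connected $8k$-manifold satisfying the tangential hypothesis $\im(\tau_{X*}) \subset F_*(\pi_{4k}(BU))$; by the improved Yang theorem (Theorem~\ref{thm:Yang+}), $X$ then admits a stable complex structure $\zeta_X$. Removing a disc again yields a stable complex structure on $W$ whose boundary restriction is $\zeta_{std}$, since it extends over $D^{8k}$. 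The decisive ingredient is then Lemma~\ref{lem:delJ_is_topological}: for $q \equiv 3 \bmod 4$, the stable complex structure induced on the boundary sphere by an almost complex filling depends only on the diffeomorphism type of $W$, so $S\partial J$ must also equal $\zeta_{std}$, a contradiction. Your proposal does not contain anything playing the role of this uniqueness lemma, and without it the comparison between the hypothetical filling and a ``good'' structure on the same $W$ has no teeth.

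The more serious problem is the role you assign to the Bernoulli input. You write that fillability of $(S^{8k-1},\zeta_{ex})$ ``becomes equivalent to the integrality of a concrete Bernoulli-number expression,'' and that the appendix supplies ``the needed non-divisibility statement,'' with integrality supposedly failing for $k\ge 2$ and holding at $k=1$. This is backwards in two respects. First, Kellner's Theorem~\ref{thm:bernoulli} is a \emph{divisibility} result: $2^{j+3}$ divides $\Num\bigl(\frac{B_{2k}-B_k}{B_{2k}B_k}\bigr)$. Second, the way it enters the paper is in the \emph{opposite logical direction} to what you describe: the divisibility is used (together with Wall's evenness of the intersection form for $k>2$) to show that Yang's Bernoulli conditions are \emph{always} satisfied once $k\ge 2$, so that $X$ always admits a stable complex structure. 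Existence of that structure, not its obstruction, is what drives the contradiction. The $k=1$ case is special not because some integrality holds there but because the intersection form of a simply connected closed $8$-manifold need not be even, so the Wall-type divisibility argument is unavailable; the paper handles $k=1,2$ by plugging in the explicit values of $B_1,B_2,B_4$.

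In short: your reduction ``non-fillability $\Leftrightarrow$ non-integrality of a Bernoulli expression, and the appendix supplies the non-integrality'' is not available, since the appendix supplies exactly the opposite. What the appendix actually buys is the removal of the Bernoulli hypotheses from Yang's theorem, and that positive existence result is then combined with the rigidity of the boundary structure (Lemma~\ref{lem:delJ_is_topological}) to conclude. If you want to salvage the bordism-obstruction viewpoint you would still need, in effect, to prove both of those inputs; they are not formal.
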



Theorem \ref{thm:wackyspheres} rests on Theorem \ref{thm:YangIntro} below, which improves a result of Yang \cite{Yang12} 
about the existence of stable almost complex structures on $(4k{-}1)$-connected $8k$-manifolds.
Before stating these results we first recall some notation and terminology. Let $F \colon BU \to BSO$ be the forgetful map between the classifying spaces for stable unitary 
and stable oriented vector bundles.
A  necessary condition
for an oriented manifold $X$
to admit a stable complex structure
is that $$\im(\tau_{X*}) \subseteq F_*(\pi_{4k}(BU)) \subseteq \pi_{4k}(BSO),$$
where $\tau_{X*} \colon \pi_{4k}(X) \to \pi_{4k}(BSO)$ is induced by 
the classifying map of the stable tangent bundle of $X$, $\tau_X \colon X \to BSO$.
(Note that when $k$ is even, $\pi_{4k}(BSO)/F_*(\pi_{4k}(BU)) = \Z/2$.)
According to the following theorem, once $k>1$, this necessary condition is also
sufficient. 

\begin{Theorem} \label{thm:YangIntro}
A smooth closed oriented $(4k{-}1)$-connected $8k$-manifold\,\,\,$Y$\! admits
a stable almost complex structure if and only if
\begin{enumerate}
\item $k\geq 3$ is odd, or
\item  $k=1$ and the  signature $\sigma_Y$ of $Y$  is even, or
\item $k$ is even and  $\im(\tau_{Y*}) \subseteq F_*(\pi_{4k}(BU))$.
\end{enumerate}
\end{Theorem}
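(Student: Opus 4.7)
My plan is to translate the existence of a stable almost complex structure on $Y$ into a $KO$-$KU$ lifting problem and then analyze it via obstruction theory, extracting the secondary obstruction from characteristic numbers of $Y$. Such a structure corresponds to a lift of $\tau_Y \in \wt{KO}^0(Y)$ along the realification map $\wt{KU}^0(Y) \to \wt{KO}^0(Y)$ induced by the forgetful map $F\colon BU \to BSO$.

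Since $Y$ is $(4k{-}1)$-connected, I endow it with a CW structure having a single $0$-cell, some number $N$ of $(4k)$-cells, and a single top $(8k)$-cell, so that $Y^{(4k)} \simeq \bigvee_N S^{4k}$, and use the cofiber sequence $Y^{(4k)} \hookrightarrow Y \to S^{8k}$ to split the lifting problem into a primary obstruction on the skeleton and a secondary obstruction on the top cell. Comparing the $\wt{KU}$- and $\wt{KO}$-cofibration long exact sequences, the primary obstruction is the condition that the restriction of $\tau_Y$ to every wedge summand lies in the image of $\wt{KU}^0(S^{4k}) \to \wt{KO}^0(S^{4k})$. By Bott periodicity this image is all of $\Z$ for $k$ odd and the index-two subgroup $2\Z$ for $k$ even; in the first case the primary obstruction vanishes automatically, and in the second it is precisely the condition of case (3). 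Assuming it vanishes, there is a unique lift $\tilde\tau_0 \in \wt{KU}^0(Y^{(4k)})$ (realification being injective on each $\wt{KU}^0(S^{4k}) = \Z$), which extends to some $\tilde\tau \in \wt{KU}^0(Y)$ because $\wt{KU}^1(S^{8k}) = 0$; the difference between its realification and $\tau_Y$ lies in the image of $\wt{KO}^0(S^{8k}) = \Z$, and the ambiguity in the extension shifts it by $2\Z$, so the secondary obstruction is a well-defined element of $\Z/2$.

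The main task, and the principal obstacle, is to identify this $\Z/2$-obstruction in terms of computable characteristic numbers of $Y$. Because $Y$ is $(4k{-}1)$-connected, its nonzero Pontryagin and putative Chern classes live only in $H^{4k}$ and $H^{8k}$. Hirzebruch's signature theorem together with the integrality of the $\hat A$-genus on the spin manifold $Y$ give linear relations between $\sigma_Y$, $p_k^2[Y]$ and $p_{2k}[Y]$ with rational coefficients involving Bernoulli numbers. Pairing a suitable Bott element against $\tilde\tau$ to compute the secondary obstruction, I expect to obtain an explicit $\Z/2$-congruence in $\sigma_Y$ and $p_{2k}[Y]$ whose coefficient is essentially a Bernoulli denominator.

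For $k = 1$ this congruence reduces directly to $\sigma_Y \bmod 2$, yielding case (2). For $k \geq 3$ odd, the Bernoulli number congruence proved in the appendix by Kellner shows the relevant coefficient is always even, so the obstruction vanishes identically, establishing case (1). For $k$ even, vanishing of the primary obstruction forces extra $2$-divisibility of the characteristic numbers governing the secondary obstruction, and combining this with the integrality of $\hat A$ and $L$ then forces that obstruction to vanish as well, establishing case (3).
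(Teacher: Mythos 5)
Your high-level strategy — set up stable almost complex structures as a $BU\to BSO$ lifting problem, split it over the cofibration $Y^{(4k)}\hookrightarrow Y\to S^{8k}$ into a primary obstruction (image of $\pi_{4k}(BU)$ in $\pi_{4k}(BSO)$) and a secondary $\Z/2$-obstruction on the top cell, then identify the latter with a characteristic-number congruence — is in essence a re-derivation of Yang's theorem, which is the content of \cite{Yang12}. The paper does \emph{not} redo this: it cites Yang's result as Theorem~\ref{thm:Yang} (a black box) and then proves that the Bernoulli-number congruences appearing there are \emph{automatically} satisfied under the remaining hypotheses. So your approach is genuinely different: you attempt the obstruction-theoretic computation itself, whereas the paper only has to do arithmetic on top of it. That is a legitimate route in principle, but it demands that you actually carry out the identification of the secondary obstruction with Bernoulli data, which your write-up glosses over with ``I expect to obtain an explicit $\Z/2$-congruence\dots whose coefficient is essentially a Bernoulli denominator.'' That step is the bulk of Yang's paper and cannot be waved at.

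There is also a concrete error in which case needs the appendix. Kellner's Theorem~\ref{thm:bernoulli} concerns $\Num\!\left(\frac{B_{2k}-B_k}{B_{2k}B_k}\right)$ for \emph{even} $k$ (write $k=2^jc$ with $j\geq 1$, get $2^{j+3}$ divides the numerator), and in the paper it is invoked precisely to dispose of the $k$ even case of Yang's condition (2). The $k\geq 3$ odd case is handled by the elementary observation that
\[
\Num\!\left(\frac{B_{2k}+B_k}{B_{2k}B_k}\right)=D_kN_{2k}+D_{2k}N_k=2\bigl(D'_kN_{2k}+D'_{2k}N_k\bigr)
\]
is divisible by $4$, since $N_k, N_{2k}, D'_k, D'_{2k}$ are all odd. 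Your proposal assigns Kellner's result to the odd case and leaves the even case to a vague appeal to ``extra $2$-divisibility,'' which reverses the actual division of labour. Finally, the essential arithmetic engine — Wall's formula \cite[$(15)_m$]{Wall62} expressing $\wh A_{2k}(Y)$ in terms of $\sigma_Y$ and $\tau_Y^2$, combined with the evenness of the intersection form for $k>2$, to force $2^{4k-3-2j}\mid\sigma_Y$ — is only gestured at in your ``integrality of $\hat A$ and $L$'' remark. Without the explicit $2$-adic estimate on $\sigma_Y$ that this provides, neither the odd nor the even case of Yang's congruence can be verified, so the argument as written has a genuine gap there as well.
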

The improvement provided by Theorem~\ref{thm:YangIntro} over
Yang's result 
is the removal of assumptions involving Bernoulli numbers.
This step is made possible by a new divisibility property
of differences of reciprocals of Bernoulli numbers, which is proven in the Appendix
written by Bernd Kellner.


Computing the appropriate bordism obstruction class to Stein fillability from \cite{BCS2},
Theorem~\ref{thm:wackyspheres} implies the following non{-}fillability result for highly
connected manifolds:


\begin{Corollary}\label{cor:wacky}
Let $M$ be a $(4k{-}2)$-connected $(8k{-}1)$-manifold and $k \geq 2$.
Suppose that $M$ admits an almost contact structure.
Then $M$ admits an almost contact structure which cannot be
represented by any Stein fillable contact structure.
\end{Corollary}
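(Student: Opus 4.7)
The plan is to start from the given almost contact structure $\varphi_0$ on $M$ and, when it is already Stein fillable, modify it by a connected sum with the exotic almost contact sphere $(S^{8k-1}, \zeta_{ex})$ of Theorem~\ref{thm:wackyspheres}. Since $S^{8k-1}$ is the standard sphere, $M \# S^{8k-1}$ is diffeomorphic to $M$, and so $\varphi := \varphi_0 \# \zeta_{ex}$ may be regarded as a new almost contact structure on $M$. The aim is then to show that at least one of the two structures $\varphi_0$ and $\varphi$ on $M$ is not Stein fillable; the original structure serves as a fallback, and the real content is to rule out fillability of the new one.

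The mechanism driving the argument is the bordism-theoretic obstruction $o$ of \cite{BCS2}: to an almost contact manifold $(N, \psi)$ this construction associates a class $o(N, \psi)$ in an appropriate bordism group which must vanish whenever $(N, \psi)$ admits a Stein filling. The key property I would need to extract from \cite{BCS2} is additivity under connected sum,
\[
o\bigl((M, \varphi_0) \# (S^{8k-1}, \zeta_{ex})\bigr) \; = \; o(M, \varphi_0) \, + \, o(S^{8k-1}, \zeta_{ex}),
\]
which should follow essentially formally, because connected sums of almost contact manifolds correspond to boundary connected sums of the model almost complex cobordisms whose bordism classes define~$o$.

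With additivity in hand, the rest of the argument is immediate: if $\varphi_0$ itself is already non-fillable there is nothing to prove, and otherwise $o(M, \varphi_0) = 0$, so $o(M, \varphi) = o(S^{8k-1}, \zeta_{ex})$. Theorem~\ref{thm:wackyspheres} (which is itself to be proved by computing this very obstruction) states that $\zeta_{ex}$ is not Stein fillable for $k \geq 2$, and in the framework of \cite{BCS2} this is precisely the assertion that $o(S^{8k-1}, \zeta_{ex}) \neq 0$. Thus $o(M, \varphi) \neq 0$, and $\varphi$ cannot be represented by any Stein fillable contact structure.

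The main technical point will be carefully unpacking the obstruction of \cite{BCS2} in order to verify both (a) its additivity under connected sum, and (b) the non-vanishing of $o(S^{8k-1}, \zeta_{ex})$ as an explicit class, rather than only the abstract non-existence of a filling. Both are expected to be concrete computations with the bordism obstruction of \cite{BCS2} and the explicit computation underlying the proof of Theorem~\ref{thm:wackyspheres}, rather than genuinely new geometric input; nevertheless they constitute the substantive part of the argument.
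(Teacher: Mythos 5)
Your approach is essentially the same as the paper's: start from a Stein fillable structure, form the connected sum with $(S^{8k-1},\zeta_{ex})$, invoke additivity of the bordism obstruction of \cite{BCS2}, and conclude non-fillability from $o(S^{8k-1},\zeta_{ex})\neq 0$.

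One point you flag as a ``technical detail'' actually carries the hypothesis on $M$, and is worth spelling out. The obstruction of \cite{BCS2} is a class $[M,\bar\zeta]$ in $\Omega_{2q+1}(B^{q-1}_\zeta;\eta^{q-1}_\zeta)$, a bordism group that \emph{depends on the complex normal $(q-1)$-type} of the manifold. For the additivity statement to be useful, the obstruction class of $(S^{8k-1},\zeta_{ex})$ must live in the \emph{same} group as that of $(M,\zeta)$, and must remain nonzero there. This is exactly where the $(4k-2)$-connectedness of $M$ enters: it forces the complex normal $(4k-2)$-type of $(M,\zeta)$ to be $(BU\langle 4k\rangle,\pi_{4k})$, which coincides with the normal type of $(S^{8k-1},\zeta_{ex})$. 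The paper makes this identification explicit, observing that both obstruction classes live in $\Omega^{U\langle 4k-1\rangle}_{8k-1}$ and that Theorem~\ref{thm:wackyspheres} together with Theorem~\ref{thm:stein} gives $[S^{8k-1},\bar\zeta_{ex}]\neq 0$ \emph{in that group}. Without this coincidence of normal types, the sphere's class would have to be pushed forward along a map of Postnikov factorisations $BU\langle 4k\rangle \to B^{4k-2}_\zeta$ and could, in principle, become trivial. So ``additivity'' is not merely formal here; it is the high connectivity of $M$ that makes the two obstruction theories comparable, and you should state this rather than defer it.
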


{\bf {Outline of the paper:}} In Section~\ref{sec:fillability}
we review some basic notions and recall the definition of the
obstruction class as introduced in \cite{BCS2} associated to an almost contact
manifold. We also prove
Theorems~\ref{thm:mandmtimess2} in this section.  In
Section~\ref{sec:max} we present the proof
of Theorem~\ref{thm:maximalelement}.
Section~\ref{subsec:stein_fillability_and_connected_sums} is devoted
to the study of the relation between Stein fillability and connected
sums, and in particular it contains the proofs of
Theorems~\ref{thm:mainconnsum} and~\ref{thm:5fill}.  In
Section~\ref{sec:nonst-acs} we examine the Stein fillability of stable almost
contact structures on $(8k{-}1)$-spheres, and in particular prove
Theorem~\ref{thm:wackyspheres} and Corollary~\ref{cor:wacky}. 
Section~\ref{sec:nonst-acs} also contains our improvement 
of Yang's result given in Theorem~\ref{thm:YangIntro}
about existence of stable almost complex structures on
highly connected $8k$-manifolds.
The
Appendix contains the number theoretic
result about Bernoulli numbers needed for the proof of
Theorem~\ref{thm:wackyspheres}, and was written by Bernd Kellner.

\bigskip

{\bf {Acknowledgements:}} The authors would like to thank the
Max-Planck-Institute in Bonn and the Laboratoire de Math\'{e}matiques Jean Leray in Nantes
for their hospitality which enabled parts of this work to be carried out.  
We would also like to thank 
Pieter Moree for providing a bridge to the world of number theory
and contacting Karl Dilcher and Bernd Kellner. 
We are grateful to the referee for many helpful comments and suggestions.
JB was partially supported by DFG Grant BO4423/1-1.
DC acknowledges the support  of the Leibniz 
Prize of Wolfgang L\"{u}ck, granted by the Deutsche Forschungsgemeinschaft.
AS was partially supported by OTKA
K100796, by the \emph{Lend\"ulet program} of the Hungarian Academy of
Sciences and by ERC Advanced Grant LDTBud.    
The present work is part of the authors' activities within CAST, a Research Network Program of the European
Science Foundation.


\section{Fillability and surgery} 
\label{sec:fillability}
In their proof of the existence of contact structures on closed almost contact manifolds
\cite{BEM}, Borman{-}Eliashberg-Murphy produce contact structures with the
additional property that they are not fillable in any sense. For this reason we will focus
on finding fillable structures on various manifolds.

\subsection{Fillable structures and Stein cobordisms}
\label{subsec:fillandsurg}
We begin by recalling the definitions of the various standard notions
of fillability of contact structures. For a more  detailed account we
refer to \cite{Geiges08, Massot12}. Recall that a symplectic manifold $(W,
\omega)$ is a $(2q{+}2)$-dimensional manifold $W$ with a closed $2$-form
$\omega$ such that $\omega^{q{+}1} \neq 0$. Hence a symplectic manifold
carries a canonical orientation. Similarly, a cooriented contact
structure $\xi$ on a $(2q{+}1)$-manifold $M$ determines an orientation
of $M$ given by the form $\lambda \wedge (d \lambda)^q$.

\begin{Definition} \label{def:contact}
A contact manifold $(M,\xi)$ is \textbf{weakly symplectically
  fillable} if it is the oriented boundary of a compact symplectic manifold
$(W,\omega)$ and there is an almost complex structure $J$ that is
tamed by $\omega$ so that 
 $J(TM) \cap TM = \xi$ and for a contact form
$\lambda$ defining $\xi $ we have 
$ d \lambda (v,Jv) > 0$ (for all  $0\neq v \in \xi$).
\end{Definition}

\noindent This definition was introduced in \cite{Massot12}, where it
was shown to be strictly weaker than the more standard notion of
strong fillability.
\begin{Definition}
A contact manifold $(M,\xi)$ is called \textbf{strongly symplectically
  fillable} if it bounds a compact symplectic manifold $(W,\omega)$ and there
is an outward pointing vector field $V$ near $\del W$ such that the
Lie derivative satisfies $L_V\omega = \omega$ and $\lambda
=\iota_V\omega$ is a defining $1$-form for $\xi$.  If the symplectic
form $\omega$ is also exact then we say that $(M,\xi)$ is \textbf{exactly
  fillable}.
\end{Definition}
\noindent Note that strong fillability is equivalent to weak
fillability plus the condition that the symplectic form is exact near
the boundary \cite[Remark 1.11]{Massot12}. A further specialisation of
the fillability notion is that of Stein fillability. Recall that a
\emph{Stein domain} is a compact, complex manifold $(W,J)$ with
boundary that admits a function $f\colon W \to [0,1]$ so that
$f^{-1}(1) = \del W$ is a regular level set and $\omega = -dd^{\mathbb
  {C}} f$ is a symplectic form (where $d^{\mathbb {C}} f(X) = d
f(JX)$).
\begin{Definition}
A contact manifold $(M,\xi)$ is \textbf{Stein fillable} if it bounds a
Stein domain $(W,J)$ such that $\xi = J(TM) \cap TM$.
\end{Definition}
\noindent These notions of fillability fit into the 
following sequence of inclusions of contactomorphism classes of contact manifolds, all of which are known to be strict:
\begin{equation*} \label{eq:contactflavours}
\{\text{Stein fillable\} $\subset$ \{exactly fillable\} $\subset$
  \{strongly fillable\} $\subset$ \{weakly fillable\}}.
\end{equation*}

The applicability of surgery theoretic methods in the study of
fillable contact structures is provided by the following fundamental
result of Eliashberg:

\begin{Theorem}[Eliashberg's $h$-principle, \cite{Cieliebak&Eliashberg12, Eliashberg??}]
\label{thm:h-principle}
	Let $(W,J)$ be a compact $(2q{+}2)$-di\-men\-si\-o\-nal almost complex
        manifold admitting a handle decomposition with handles of index $q{+}1$ or less, and
        suppose that $q\geq 2$.  Then $J$ is homotopic to a complex
        structure $\widetilde{J}$ so that $(W,\widetilde{J})$ is a Stein
        filling of a contact structure $\xi$ on $M = \partial W$. \qed
\end{Theorem}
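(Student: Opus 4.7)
The theorem is Eliashberg's classical $h$-principle for Stein structures, and my plan is to proceed by induction on the handles of the given handle decomposition of $W$. The base case is a disjoint union of $0$-handles, each diffeomorphic to $D^{2q+2}$, on which the standard Stein structure of the unit ball in $\C^{q+1}$ is compatible (up to a homotopy of almost complex structures) with the restriction of $J$. For the inductive step, I would show that if $W_0 \subset W$ is a sublevel set of a strictly plurisubharmonic function whose regular boundary $(M_0, \xi_0)$ is contact, and if $W_1 = W_0 \cup h$ for a single handle $h$ of index at most $q+1$, then both the Stein structure on $W_0$ and the homotopy of $J$ extend over $h$.

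The inductive step breaks into two cases. For a subcritical handle of index $k \le q$, the attaching $(k-1)$-sphere $S \subset M_0$ has dimension strictly less than the maximal isotropic dimension $q$. The formal data provided by the restriction of $J$ to a neighborhood of $S$, together with Gromov's $h$-principle for isotropic embeddings, lets us isotope $S$ to an isotropic sphere in $(M_0, \xi_0)$, after which Weinstein's model subcritical handle attachment extends the Stein structure and produces a new contact boundary $(M_1, \xi_1)$. At the level of almost complex structures, the model is built so that the extension is compatible with a homotopy from the inherited $J$ to the new Stein structure.

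The essential difficulty, and the main obstacle, is the critical case $k = q+1$. Here the attaching $q$-sphere $S$ lies in the $(2q+1)$-dimensional contact manifold $(M_0, \xi_0)$ and must be isotoped to be \emph{Legendrian}, i.e.\ isotropic of maximal dimension. The strategy is to apply the $h$-principle for Legendrian \emph{immersions} — whose formal data is precisely the Lagrangian subbundle of $\xi_0|_S$ recorded by $J$ — to obtain a Legendrian immersion in the regular homotopy class of $S$, and then to remove its self-intersections by a Whitney trick. The hypothesis $q \geq 2$ ensures that the ambient contact manifold has dimension at least five, giving the codimension needed to implement Whitney disks compatibly with the contact structure. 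Once $S$ is Legendrian, Weinstein's critical handle attachment extends the Stein structure across $h$. Iterating over all handles of the given decomposition yields the Stein structure $\widetilde{J}$ on $W$, homotopic to $J$ through almost complex structures, with induced contact boundary $\xi$ on $\partial W$.
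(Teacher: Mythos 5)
The paper states this theorem as a known result and supplies no proof, citing it to Eliashberg's original 1990 paper and to the book of Cieliebak and Eliashberg; there is therefore no in-text argument to compare your sketch against. Your outline is nonetheless a faithful summary of the argument in those references: induct on a handle decomposition ordered by increasing index, deal with subcritical attachments via Gromov's $h$-principle for subcritical isotropic embeddings followed by Weinstein's model handle attachment, and deal with the critical index-$(q{+}1)$ case by making the attaching $q$-sphere Legendrian.

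The one place I would push you to sharpen is the phrase ``remove its self-intersections by a Whitney trick,'' since this slightly misdescribes the step where the hypothesis $q \geq 2$ actually enters. A generic smooth immersion of a $q$-sphere into a $(2q{+}1)$-manifold already has no double points by a transversality count, so there is nothing to cancel in the classical Whitney sense; the difficulty is rather that one must perturb \emph{within} the Legendrian category, which is far more rigid. The correct statement is that a Legendrian immersion of a $q$-manifold into a contact $(2q{+}1)$-manifold can be $C^0$-approximated by a Legendrian embedding when $q \geq 2$, by locally stabilizing one sheet of the immersion near each double point to push it off the other sheet. This is precisely what fails for $q = 1$ (there is extra Legendrian rigidity in contact $3$-manifolds), which is why Eliashberg's theorem is false in complex dimension two and why $q \geq 2$ appears in the hypothesis. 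You should also note that one first reorders the handles so that they are attached in increasing index, and that for the critical handles one must verify that the framing of the attaching sphere prescribed by $J$ agrees with the one demanded by Weinstein's model; both points are addressed in the cited references.
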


The concept of Stein domains can be generalized to 
cobordisms as follows:

\begin{Definition}
A smooth cobordism $W$ between contact 
manifolds $(M_0, \xi _0)$ and $(M_1, \xi _1)$ is a \textbf{Stein cobordism}
if 
\begin{itemize}
\item $\partial W=-M_0 \sqcup M_1$;
\item $W$ admits a complex structure $J$ and a map $f\colon W\to [0,1]$ such that 
$M_0 :=f^{-1}(0)$ and $M_1:=f^{-1}(1)$ are regular level sets;
\item $\omega =-dd^{\mathbb {C}}f$ is a symplectic form;
\item $\xi_i = J(TM_i)\cap TM_i$, i.e.\ the complex structure $J$ 
induces the contact structures $\xi _i$ on the ends of the cobordism. 
The contact
manifold $(M_0,\xi _0)$ is usually called the \emph{concave} end and
$(M_1, \xi _1)$ the \emph{convex} end of the Stein cobordism $(W, J)$.
\end{itemize}
\end{Definition}

The proof of  
Theorem~\ref{thm:h-principle} proceeds by inductively adding handles to the standard
contact structure on the sphere $S^{2q{+}1}$ (which is regarded as the 
boundary of the standard complex ball), and showing that the traces of these
handle attachments can be endowed with the structure of a Stein cobordism:

\begin{Theorem}\label{thm:h-principleforhandles}
Let $(M^{2q{+}1},\xi)$ be a contact manifold of dimension $2q{+}1 \geq 5$.
Suppose that $k \leq q{+}1$ and that $M'$ is obtained from $M$ 
via an 
almost complex handle attachment  of index $k$ with trace $(M \times I) \cup h_{k}$. 
Then the almost complex structure $J$ on the trace 
is homotopic to a complex structure $\widetilde{J}$ so that
$((M\times I)\cup h_{k}, \widetilde{J})$ 
is a Stein cobordism  from $(M, \xi)$ to $(M',\xi')$ (with some contact structure $\xi '$ on 
$M'$). \qed
\end{Theorem}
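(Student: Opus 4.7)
The plan is to split the argument into the subcritical case ($k \leq q$) and the critical case ($k = q{+}1$), and in each case to deform the attaching data of $h_k$ so that a standard Weinstein/Stein handle model can be glued in. The input is a smooth attaching embedding $f\colon S^{k{-}1} \times D^{2q{+}2{-}k} \to M$ together with an almost complex structure $J$ on the trace $(M\times I) \cup h_k$ restricting to the contact structure $\xi$ on $M\times\{0\}$; the output should be a complex structure $\widetilde J$ making the trace into a Stein cobordism, homotopic to $J$ through almost complex structures.

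In the subcritical case ($k{-}1 < q$), I would first invoke Gromov's $h$-principle for isotropic immersions (in the form stated, for instance, in Eliashberg--Mishachev) to deform the smooth attaching sphere, through embeddings, to an isotropic embedding $\Lambda \subset (M,\xi)$ whose formal isotropic data is prescribed by the almost complex framing of $h_k$. Because $\dim \Lambda = k{-}1 < q$ and $\Lambda$ is a sphere, the symplectic normal bundle $T\Lambda^\omega/T\Lambda$ inside $\xi$ is stably trivial, and the trivialization needed to identify a neighbourhood with the standard model is determined up to homotopy by the almost complex framing. Then I would attach the standard subcritical Weinstein handle as in Cieliebak--Eliashberg; this produces a Stein cobordism whose underlying almost complex structure is, by construction, homotopic rel boundary to $J$ on the trace.

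The critical case ($k = q{+}1$) is the main obstacle. Here the attaching sphere has dimension $q$ and to glue a Weinstein handle it must be realized as a Legendrian embedding with a framing of its conformal symplectic normal bundle that matches the almost complex framing supplied by $J$. The obstruction is a rotation/Maslov-type class, which a priori need not match even after isotopy of the smooth attaching sphere. Since $\dim M = 2q{+}1 \geq 5$, I would apply Murphy's $h$-principle for loose Legendrian embeddings to produce a Legendrian embedding in the prescribed formal Legendrian class; the formal class is precisely the one recorded by the almost complex framing of $h_k$, so the resulting Legendrian has the correct rotation class. Attaching the standard critical Weinstein handle (Eliashberg's model) along this Legendrian then yields the desired Stein cobordism.

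Finally I would verify that the complex structure $\widetilde J$ produced by gluing the Weinstein handle model is homotopic to the original $J$ through almost complex structures on the trace. This is an obstruction-theoretic check: the two almost complex structures agree near $M \times \{0\}$ and near the core of the handle up to homotopy by construction, and the remaining obstructions live in cohomology groups of $(M\times I)\cup h_k$ relative to $M \times \{0\}$ that vanish because the handle is cellularly just a $k$-cell with $k \leq q{+}1 = \tfrac12(\dim W)$. Thus $\widetilde J \simeq J$, and $((M\times I)\cup h_k, \widetilde J)$ is the required Stein cobordism from $(M,\xi)$ to $(M',\xi')$ with $\xi' = \widetilde J(TM')\cap TM'$.
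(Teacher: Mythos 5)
The paper does not supply its own proof of this statement: the \verb|\qed| at the end of Theorem~\ref{thm:h-principleforhandles} marks it as a recalled result, and the only ``argument'' the paper offers is the Remark immediately following, which attributes the symplectic handle attachment to Weinstein~\cite{Weinstein} and the upgrade to a genuine Stein structure on the trace to Eliashberg~\cite{Cieliebak&Eliashberg12, Eliashberg??}. Your sketch follows the same skeleton as that Remark --- isotope the attaching sphere to an isotropic (resp.\ Legendrian) sphere, glue a Weinstein handle, then compare almost complex structures --- so the high-level route is the intended one. Two points deserve comment.

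First, for the critical case $k = q{+}1$ you invoke Murphy's $h$-principle for loose Legendrians. This is valid (and gives a clean statement), but it is strictly stronger than what is required: the existence part needed here is already contained in Eliashberg's original construction and is precisely the content of the ``Legendrization'' lemmas in~\cite{Cieliebak&Eliashberg12} (see the chapter proving the Existence Theorem for Stein structures), which predates Murphy's work. So your argument is correct but routes through a more powerful black box than the cited sources do.

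Second, and more substantively, the final obstruction-theoretic step is not correct as stated. You argue that after arranging agreement of $J$ and $\widetilde J$ near $M\times\{0\}$ and near the core, the remaining obstructions to a homotopy, which live in $H^i\bigl((M\times I)\cup h_k,\, M\times\{0\};\ \pi_i(SO(2q{+}2)/U(q{+}1))\bigr)$, vanish ``because $k\le q{+}1 = \tfrac12\dim W$.'' This is not a reason for vanishing: the relative cell structure gives exactly one nonzero group, namely $H^k$ with coefficients $\pi_k(SO(2q{+}2)/U(q{+}1)) \cong \pi_k(SO/U)$ in the stable range, and this group is typically nonzero. The inequality $k \le q{+}1$ plays no role. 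What actually makes the argument work is that the Weinstein model's attaching data is chosen so that $\widetilde J$ and $J$ agree, up to homotopy relative to $M\times\{0\}$, on all of $M\times\{0\}$ \emph{together with} the core of $h_k$; since $M\times\{0\}\cup\text{core}$ is a deformation retract of the trace, the obstruction group \emph{relative to that larger subcomplex} is zero. In other words, the ``matching of the almost complex framing'' must be verified to produce agreement on the core itself, at which point no obstruction theory in positive degree remains. As written, your proof has a genuine gap at exactly this point: you assert the obstruction groups vanish for the wrong reason, when in fact they do not vanish and must instead be killed by a more careful choice of attaching data.
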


\begin{Remark}
By equipping the product $M_0\times I$ with the symplectic structure given by the symplectization $\omega _{sp}(\xi _0)$ 
of the contact 
structure $\xi _0$ and isotoping the attaching sphere of the handle $h_k$ to an
isotropic sphere, the symplectic form $\omega _{sp}(\xi _0)$ was extended by 
Weinstein~\cite{Weinstein} to the trace $(M_0\times I )\cup h_k$. The existence of a Stein 
structure on the trace (in particular, the construction of the appropriate function $f$ of
the definition) is due to Eliashberg~\cite{Cieliebak&Eliashberg12, Eliashberg??}. 
When the symplectic or Stein structures are implicitly assumed in our later arguments, we will refer to such 
handles and handle attachments as Stein/Weinstein handles resp.\ handle attachments.
\end{Remark} 

In the following we would like to emphasize the topological nature of the above definitions.
To do this in the proper setting, we need to recall the definitions of almost contact and
stably complex structures and manifolds.

Suppose that $M$ is a smooth closed oriented
$(2q{+}1)$-manifold and $\varphi$ is an almost contact structure on
$M$. The tangent bundle of $M$ is classified by the the map $\tau
\colon M\to BSO(2q{+}1)$, and an almost contact structure provides a
lift of this map to $BU(q)$:
\[ \xymatrix{  & BU(q) \ar[d]^-{F_q}\\
M \ar[r]^-{\tau} \ar[ur]^-{\varphi} & BSO (2q{+}1),} \]
where $F_q$ is induced by the canonical embedding $U(q)\to SO(2q{+}1)$. All these maps can be stabilized to yield maps to $BSO$ resp.\  $BU$. 
For some purposes, it is helpful to formulate results using the \emph{stable normal Gauss map}
$\nu\colon M\to BSO$ rather than the tangential map $\tau$, and we
will follow this strategy.
In this setting, a map $\zeta \colon M \to BU$ in a commutative diagram
\[ \xymatrix{  & BU \ar[d]^-F\\
M \ar[r]^-{\nu} \ar[ur]^-{\zeta} & BSO } 
\] 
describes a complex structure on the normal bundle of $M$.
Since the sum of the stable tangent and normal bundles is canonically trivialized, 
a normal complex structure determines a unique {\em stable complex} (or {\em stable contact}) structure,
and \emph{vica versa}.
Theorem~\ref{thm:h-principleforhandles} motivates the following definition:

\begin{Definition} \label{def:topological-Stein{-}cobordism}
A stably almost contact $(2q{+}1)$-manifold  $(M_0,\zeta_0)$ is \defin{topologically 
Stein cobordant} to $(M_1,\zeta_1)$ if there is a stably complex  cobordism $(W,\zeta)$ 
such that 
$$\partial (W, \zeta) = -(M_0,\zeta_0) \sqcup (M_1,\zeta_1)$$
as stably complex manifolds and $W$ is built from $M_0\times [0,1]$ 
by attaching handles of index $\le q{+}1$. In this case we write
$$(M_0,\zeta_0) \prec (M_1,\zeta_1),$$
and call $(W, \zeta )$ a \defin{topological Stein cobordism}.
\end{Definition}
Note that according to \cite[Lemma 3.6]{BCS2} the Stein cobordism
relation is the same if we consider almost complex cobordisms or
stably complex ones. This follows from the fact that every almost
complex structure in a given stable class can be realized by taking
the connected sum with various Stein fillable almost contact structures on
the standard sphere. Hence we can also consider the Stein cobordism
relation given by a true almost complex bordism $(W,J)$ between almost
contact manifolds $(M_0,\varphi_0) $ and $ (M_1,\varphi_1)$. In short,
if $\varphi _i$ generates the stable complex structure $\zeta _i$, then
\begin{equation} \label{eq:stable_and_unstable_Stein_cobordism}
(M_0,\varphi_0) \prec (M_1,\varphi_1) \Longleftrightarrow
(M_0,\zeta_0) \prec (M_1,\zeta_1).
\end{equation}
If, in addition, an almost contact structure $\varphi _0$ on
$M_0$ is represented by a contact structure, then repeated
application of Theorem~\ref{thm:h-principleforhandles} shows that $J$ can be homotoped to a Stein
structure on the cobordism $W$.

A Stein cobordism from $(M_0, \xi _0)$ to $(M_1, \xi _1)$ can be glued
to a Stein filling of $(M_0, \xi _0)$, providing a Stein filling of
$(M_1, \xi _1)$. Attaching Stein/Weinstein handles preserves strong
fillability, hence gluing a Stein cobordism to a strong filling again
yields a strong filling. 
When gluing a Stein cobordism to a weak symplectic filling, however,
some care is needed: as shown by the next lemma, we need to assume
that the symplectic form vanishes on the attaching spheres of
$3$-handles.

\begin{Lemma}\label{weak_fill}
Let $(W ,\omega)$ be a weak filling of a contact manifold $(M ,\xi_0)$
and suppose that $(W_1, J)$ is a Stein cobordism from $(M, \xi _0)$ to
$(M_1, \xi _1)$ consisting of a single $k$-handle attachment so that
$\omega$ vanishes on the homology class of the
attaching sphere if $k =3$. Then $W' = W\cup
W_1$ (equipped with a suitable symplectic structure $\omega'$, based
on $\omega $ and the Stein structure on $W_1$) provides a weak filling
of $(M_1, \xi _1)$.

Furthermore, if the attaching sphere of a $2$-handle bounds a surface
$\Sigma$ in $M$ then we can assume that the $\omega'([\Sigma \cup
  D^2]) = 0$,\ where $D^2$ denotes the core of the $2$-handle.
\end{Lemma}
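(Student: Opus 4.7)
The plan is to build $\omega'$ on $W'=W\cup W_1$ by gluing $\omega$ with the Stein symplectic form $\omega_1=-dd^{\C}f$ on $W_1$ after suitable collar adjustments. Pick a contact form $\lambda_0$ for $\xi_0$ and a Liouville primitive $\alpha$ for $\omega_1$ on $W_1$ with $\alpha = e^t\lambda_0$ on a standard collar $M\times[0,\epsilon)\subset W_1$. Using the taming hypothesis on $(W,\omega)$, a standard collar-interpolation argument modifies $\omega$ on a small enlargement of $W$ so that near the new boundary it equals $C\,d(e^t\lambda_0)$ for a large constant $C>0$. The enlarged form still weakly fills $(M,\xi_0)$ and agrees with the rescaled Weinstein form $C\,d\alpha$ on the attaching collar of $h_k$.

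Next I would extend this form across the handle $h_k$ using $C\,d\alpha$. The only obstruction to gluing two closed $2$-forms that agree up to an exact correction on a collar of the attaching region is the cohomology class of the discrepancy, localized near the attaching sphere $S^{k-1}$; this obstruction lives in $H^2(S^{k-1})$. For $k\in\{1,2\}$ we have $\dim S^{k-1}\le 1$, and for $k\ge 4$ we have $\dim S^{k-1}\ge 3$, so $H^2(S^{k-1})=0$ in both ranges and the extension is automatic. The only nontrivial case is $k=3$, where $H^2(S^2)=\R$ and the obstruction equals the period $\int_{S^2}\omega$, which vanishes by hypothesis. Taking $C$ sufficiently large ensures the glued $2$-form $\omega'$ is symplectic throughout $h_k$, and after homotoping the almost complex structures on $W$ and $W_1$ to agree on a common collar near $M$, $\omega'$ tames an almost complex structure adapted to $\xi_1$, so $(W',\omega')$ weakly fills $(M_1,\xi_1)$.

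For the \emph{furthermore} claim, if a $2$-handle is attached along $S^1=\partial D^2\subset M$ with $S^1=\partial\Sigma$ for a surface $\Sigma\subset M$, then isotropy of $S^1$ gives $\alpha|_{S^1}=0$, so Stokes yields
\[
\omega'([\Sigma\cup D^2])=\int_\Sigma\omega+C\int_{D^2}d\alpha=\int_\Sigma\omega.
\]
To kill this residual period I would replace $\omega$ by $\omega+d(\chi\beta)$, where $\beta$ is a $1$-form on $M$ with $\int_{S^1}\beta=-\int_\Sigma\omega$ and $\chi$ is a collar cutoff equal to $1$ on $M$. Taking $\beta$ concentrated in a thin tubular neighborhood of $S^1$ keeps $d(\chi\beta)$ of small $C^0$-norm, preserving symplecticity and the taming condition (both open). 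The new period computation yields $\int_\Sigma\omega+\int_{S^1}\beta=0$, as required.

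The main obstacle is this last cohomological adjustment: producing a perturbation of $\omega$ that realises a prescribed period on $\Sigma$ while remaining small enough not to destroy the weak-filling conditions. The rest of the argument is a routine assembly of Weinstein-handle gluing onto a weak filling, with the restriction to $k=3$ arising naturally because $H^2(S^{k-1})$ vanishes for all other values of $k$.
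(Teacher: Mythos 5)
The student's overall plan runs parallel to the paper's (normalize the weak filling near the boundary by a collar extension, identify the obstruction to gluing the Weinstein handle with a cohomology class on the attaching sphere which vanishes for degree reasons unless $k=3$), but two of its steps do not hold up.

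First, the claim in the opening paragraph that one can modify $\omega$ on an enlargement of $W$ "so that near the new boundary it equals $C\,d(e^t\lambda_0)$" is false: achieving this would promote any weak filling to a strong one, contradicting the strict separation between the two notions established in \cite{Massot12}. What is actually available, and what the paper invokes via \cite[Lemma~1.10]{Massot12}, is that after attaching a long collar $[0,2C]\times M$ the symplectic form can be taken to be $\overline\omega + d(t\lambda)$ where $\overline\omega$ is a closed $2$-form cohomologous to $\omega|_M$. The term $\overline\omega$ can be made to vanish near the attaching sphere $S^{k-1}$ only if the restriction $[\omega|_M]|_{S^{k-1}} \in H^2(S^{k-1})$ vanishes, and this is the true source of the $k=3$ hypothesis. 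Your second paragraph begins to acknowledge such a ``discrepancy,'' but this is in conflict with the over-strong claim in the first paragraph: if the form really were $C\,d(e^t\lambda_0)$ near the boundary there would be nothing to obstruct and the $k=3$ hypothesis would be superfluous.

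Second, the treatment of the \emph{furthermore} clause via $\omega\mapsto\omega+d(\chi\beta)$ cannot work as a $C^0$-small perturbation. If $\int_{S^1}\beta=-\int_\Sigma\omega\ne 0$ is fixed and $\beta$ is supported in an $\epsilon$-tubular neighbourhood of $S^1$, then $\|d\beta\|_{C^0}$ grows like $\epsilon^{-1}$, so $d(\chi\beta)$ is \emph{not} small, and the openness argument for symplecticity and taming does not apply. (Indeed no \emph{closed} $1$-form on $M$ has nonzero period over $S^1 = \partial\Sigma$, so a nonzero period cannot be realised by an honestly small exact perturbation.) The paper avoids this by building the period correction into the choice of $\overline\omega$ on the long collar: choose $\overline\omega$, cohomologous to $\omega|_M$, to vanish both near $S^1$ and on $\Sigma$. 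This is a cohomological adjustment absorbed into the collar extension, not a $C^0$-small perturbation of $\omega$ on $W$. With that choice, the computation you intended goes through: using that the core $D^2$ of a Weinstein handle is isotropic and that $\partial\Sigma=S^1$ is isotropic,
\[
\omega'([\Sigma \cup D^2]) = \int_\Sigma C\,d\lambda + \int_{D^2}\omega' = C\int_{S^1}\lambda = 0.
\]
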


\begin{proof}
Let $(-\epsilon,\epsilon) \times M$ be a small regular neighbourhood
of $M$ in $W$, where $W$ has been extended slightly. Let $\lambda$ be
a defining 1-form for $\xi$. Suppose that $\omega|_M$ is exact on the
attaching sphere $S^{k-1}$ of the $k$-handle of $W_1$. Then there is a form
$\overline{\omega}$ cohomologous to $\omega|_M$ which vanishes near
$S^{k-1}$. By \cite[Lemma~1.10]{Massot12} one can alter the symplectic
structure after attaching a sufficiently long end $ [0,2C] \times
\partial W$ so that the symplectic form is given by $\overline{\omega}
+ d(t \lambda)$ for all $t \ge C - \epsilon$ and we still have a weak
filling of $M = M \times \{C\}$. In particular, near $S^{k-1} \subset
M \times \{C\}$ the symplectic form is just $d(t \lambda)$. We attach
a Stein/Weinstein $k$-handle along $S^{k-1}$ and denote the resulting
filling by $(W',\omega')$. The almost complex structure $J$ on $W$
used in the definition of weak filling then extends to an almost
complex structure $J'$ on $W'$ that is tamed by $\omega'$.

Since $\omega $ is always exact near an attaching sphere $S^{k-1}$
with $k\ne 3$ and this is the case by assumption if $k=3$, the lemma
follows immediately.

In the case of a $2$-handle whose attaching sphere $S^1$ bounds a
surface $\Sigma$, we can assume that the form $\overline{\omega}$
above vanishes on $\Sigma \subset M \times \{C\}$. Then since the core
of a Weinstein handle is isotropic with isotropic boundary, it
follows that
$$\omega'([\Sigma \cup D^2]) = \int_\Sigma C \thinspace d\lambda +
\int_{D^2} \omega' = \int_{\partial \Sigma} C\thinspace \lambda = 0,$$
giving the final claim. 
\end{proof}

In \cite{BCS1} a contact structure was constructed on $M\times S^2$ by 
constructing a topological Stein cobordism between $M\times T^2$ and
$M\times S^2$ for appropriate choices of almost
contact structures.  With the above lemma at hand, this point of view
then provides the following fillability result, which corresponds to 
Theorem~\ref{thm:mandmtimess2} in the Introduction.

\begin{Proposition}\label{cor:weak_product}
Let $(M,\xi)$ be a contact manifold of dimension $2q{+}1$ 
that admits a weak symplectic filling $(W,\omega)$. Then $M \times S^2$
admits a weakly fillable contact structure.
\end{Proposition}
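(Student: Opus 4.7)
The plan is to combine three preparatory results: (i) the weak filling of the Bourgeois contact manifold $(M \times T^2, \xi_T)$ provided by \cite[Example~5]{Massot12}; (ii) the topological Stein cobordism from $(M \times T^2, \varphi_T)$ to $(M \times S^2, \varphi_S)$ furnished by the main theorem of \cite{BCS1}; and (iii) Lemma~\ref{weak_fill}, which propagates a weak filling across a Stein handle attachment under a controlled condition on $3$-handles.

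Concretely, starting from the given weak filling $(W,\omega)$ of $(M,\xi)$, the Massot construction equips the product $W \times T^2$ with a symplectic form that weakly fills the Bourgeois contact manifold $(M \times T^2, \xi_T)$; we denote this filling $(W \times T^2, \omega + \omega_{T^2})$, where $\omega_{T^2}$ is an area form on $T^2$. Next, invoke the main result of \cite{BCS1} to obtain a topological Stein cobordism $V$ from $(M \times T^2, \varphi_T)$ to $(M \times S^2, \varphi_S)$. Since the concave end already carries the contact structure $\xi_T$, iterated application of Theorem~\ref{thm:h-principleforhandles} across the handles of $V$ upgrades $V$ to a genuine Stein cobordism and produces a contact structure $\xi_S$ on $M \times S^2$ representing $\varphi_S$. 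Finally, attach this Stein cobordism handle by handle to $(W \times T^2, \omega + \omega_{T^2})$ by repeated use of Lemma~\ref{weak_fill}, obtaining a weak filling of $(M \times S^2, \xi_S)$.

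The principal obstacle is verifying the hypothesis of Lemma~\ref{weak_fill} at every $3$-handle of $V$, namely that $\omega + \omega_{T^2}$ evaluates trivially on the homology class of each attaching $2$-sphere. By the K\"unneth decomposition
\[
H_2(M \times T^2) = H_2(M) \oplus \bigl(H_1(M) \otimes H_1(T^2)\bigr) \oplus H_2(T^2),
\]
the form $\omega + \omega_{T^2}$ pairs trivially with the middle summand, so only the projections to $H_2(M)$ and to $H_2(T^2) \cong \Z$ must be controlled. Since the cobordism of \cite{BCS1} is designed to surger the $T^2$-factor into an $S^2$-factor, $3$-handles appear only after $2$-handles that trivialize the $T^2$-direction; using handle slides if necessary, one arranges each attaching $2$-sphere to project trivially to $H_2(T^2)$ and to have vanishing $\omega$-pairing on $H_2(M)$, thus verifying the required hypothesis.
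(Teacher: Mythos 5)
Your high-level structure is exactly the paper's: fill $(M\times T^2,\xi_T)$ weakly via \cite[Example~5]{Massot12}, take the topological Stein cobordism to $M\times S^2$ from \cite{BCS1}, and attach it handle by handle using Lemma~\ref{weak_fill}. You have also correctly identified the crux of the matter: the hypothesis of Lemma~\ref{weak_fill} at $3$-handles. But the verification you give for that hypothesis has a real gap.

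The K\"unneth argument you offer lives on $H_2(M\times T^2)$, whereas by the time a $3$-handle is attached the boundary is no longer $M\times T^2$ but some intermediate manifold $X$; the relevant question is whether the \emph{current} symplectic form $\omega'$ pairs trivially with the attaching sphere in $H_2(X)$. What the paper does --- and what is missing from your argument --- is to track the cohomology class $[\omega']$ through the handle attachments, proving at each stage that $[\omega'] = g^*\bigl([\omega|_M \oplus \omega_{S^2}]\bigr)$, where $g$ is the restriction of the $(q{+}2)$-equivalence $g_Y\colon Y \to M\times S^2$ from \cite[Proposition~3.1]{BCS1}. Establishing this requires first attaching two explicit $2$-handles along the $\pi_1(T^2)$-generators to kill $\pi_1$, invoking Wall's rearrangement theorem to eliminate further $0$- and $1$-handles from the remaining cobordism, and then using the ``furthermore'' clause of Lemma~\ref{weak_fill} together with the $2$-connectivity of the relevant pair to control $[\omega']$ across the remaining $2$-handles. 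Once $[\omega']$ is identified with a pullback from $M\times S^2$, the $3$-handle case is immediate: an attaching $2$-sphere bounds a $3$-disc in $Y$, hence dies under $(g_Y)_*$ in $H_2(M\times S^2)$, so it pairs trivially with any class pulled back via $g$. Your appeal to ``handle slides to trivialize the $T^2$-direction'' does not substitute for this bookkeeping --- there is no a priori reason that every $3$-handle attaching sphere in the BCS1 cobordism can be slid to be symplectically null, and without controlling $[\omega']$ there is nothing forcing the pairing to vanish.
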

\begin{proof}
Let $(W,\omega)$ be a weak filling of $(M,\xi)$. By \cite[Example~5]{Massot12}
the manifold $M \times T^2$ admits a contact structure that is weakly
filled by the symplectic manifold $(W \times T^2,\omega \oplus \omega_{T^2})$. According to \cite[Proposition~3.1]{BCS1} there is a Stein cobordism $Y$ from $M \times T^2$ to $M \times S^2$ which fits into the following diagram:
\begin{equation}\label{eq:trianglewithy}
\xymatrix{ M\times T^2 \ar[dr]_(0.45){f_0} \ar[r]^(0.6){i_{0}} & Y 
\ar[d]^(0.4){g_Y} & M\times S^2 \ar[l]_(0.6){i_{1}} \ar[dl]^(0.45){Id} \\ & M\times S^2,} 
\end{equation}
where $g_Y$ is a $(q{+}2)$-equivalence and $f_0$ is the product of the identity with a map of degree $1$. 

The idea of the proof is to inductively apply Lemma \ref{weak_fill} to Stein handle
attachments which make up the bordism $Y$, starting from from $M \times T^2$.
We first find a topological Stein structure on $Y$ where we can apply 
Lemma \ref{weak_fill} to each handle attachement.
For this, we need to keep track of the cohmology class of the symplectic form of the filling,
when restricted to the outgoing boundary.  
Hence we note the equality of cohomology classes 
$[\omega|_M \oplus \omega_{T^2}]= f_0^*\bigl( [\omega|_M \oplus \omega_{S^2}] \bigl)$ 
for a symplectic form $\omega_{S^2}$ on $S^2$.

Let $\alpha,\beta \subset T^2$ be the standard generators of $\pi_1(T^2)$ which we consider in different $T^2$-fibers of $M \times T^2$. These are then null-homotopic in $Y$ and hence extend to maps of discs, which can be taken to be proper embeddings in $Y$ since the dimension of $Y$ is at least $6$. Let $Y_{2}$ be the bordism obtained by attaching a pair of $2$-handles along $\alpha$ and $\beta$. We then obtain decomposition of $Y = Y_2 \cup_{X_2} Y_3$, where $X_2$ is the upper boundary component of $Y_2$. Attaching these 2-handles yields $\pi_1(Y_2) = \pi_1(X_2) = \pi_1(M)$ by construction and also that the map $Y_2 \to M \times S^2$ is a surjection on $\pi_2$. This latter claim in obvious for classes in the $\pi_2(M)$-factor and for the class coming form the $S^2$-factor observe that the result of the surgery on the class $[\text{pt} \times T^2]$ is a spherical class that is mapped to $[\text{pt} \times S^2]$ under $(g_Y)_*$.

Since $Y_2$ is formed by $2$-handle attachments, 
it has the homotopy type of a space obtained by attaching $(2q{+}2)$-cells to $X_2$.
Hence the inclusion $X_2 \to Y_2$ is $(2q{+}1)$-connected and so 
$g_{X_2} \colon X_2 \to M \times S^2$ is $2$-connected:
Here, and for the rest of the proof, we set $g_Z := g_Y|_Z$ for any subspace $Z \subset Y$.
Since $Y_3$ is obtained from the $(2q{+}2)$-dimensional manifold $Y$ by deleting neighbourhoods of $2$-handles,
the inclusion $Y_3 \to Y$ is $(2q{+}1)$-connected and so 
$g_{Y_3} \colon Y_3 \to M \times S^2$ is at least $3$-connected.
It follows that the pair $(Y_3,X_2)$ is algebraically $2$-connected,
and we will use this later in the proof.

The fact that $Y_3$ is obtained from $Y$ by deleting 
neighbourhoods of $2$-handles has another important consequence.
Combined with the fact that $(Y, M \times S^2)$ is algebraically $(q{+}1)$-connected,
it implies that the pair $(Y_3,M \times S^2)$ is algebraically $(q{+}1)$-connected. 
Thus there is a handle decomposition of $Y_3$ relative to $M \times S^2$ containing only handles of index at least $q{+}2$ by a result of Wall \cite[Theorem 2.18]{BCS2} or dually there is a handle decomposition of $Y_3$ relative to $X_2$ containing only handles of index at most $q{+}2$. More precisely Wall shows inductively that given \emph{any} handle decomposition one can cancel handles of index $k < q{+}2$ with $(k{+}1)$-handles at the expense of introducing a $(k{+}2)$-handles. 

We now apply Lemma \ref{weak_fill} to $Y_2$ to obtain a weak filling $(Y_2,\omega')$ of $X_2$. Since the $2$-handles are attached along curves that are non-trivial in rational homology it follows from the long exact sequence of the pair $(Y_2, M \times T^2)$
that the map $M \times T^2 \to Y_2$ induces an injection on cohomology. In particular, we have the following equality for cohomology classes 
$[\omega'] = (g_{Y_2})^*\bigl( [\omega|_M \oplus \omega_{S^2}] \bigr)$. 

We now consider any topological Stein handle decomposition of $Y_3$ relative to $X_2$. As this pair is $2$-connected we can then apply Wall's argument to cancel all handles of index $k \le 1$ at the expense of introducing $3$-handles. In particular, the resulting handle decomposition will still be topologically Stein as $\dim(Y_3) = 2q{+}4 \ge 6$.

We now attach the $2$-handles $h^2_i$ of $Y_3$ to $X_2$. 
Let $Y_{2+} = Y_2 \cup \cup_{i=1}^n h^2_i \subset Y_3$ be the union of $Y_2$
and these new $2$-handles.  Let $D^2_i$ be the core of $h^2_i$. Since the pair $(Y_3, X_2)$ is $2$-connected 
the disc $D^2_i$ can be homotoped  inside $Y_3$ to a disc $\Delta_i \subset X_2$ relative to its boundary, which by general position is embedded. Applying Lemma \ref{weak_fill} for each $2$-handle we obtain a weak filling $(Y_{2+},\omega')$ such that $\omega'([D^2_i \cup \Delta_i]) = 0$.  
We claim that at the level of cohomology classes we again have 
$[\omega'] = (g_{Y_{2+}})^*\bigl( [\omega|_M \oplus \omega_{S^2}] \bigr)$.
To see this, consider the cohomology exact sequence
of the pair $(Y_{2+}, Y_2)$:
\[ \cdots \longrightarrow H^2(Y_{2+}, Y_2) \stackrel{j}\longrightarrow H^2(Y_{2+}) \longrightarrow H^2(Y_2) \longrightarrow \cdots~. \]
By construction, $[\omega']$ and $(g_{Y_{2+}})^*\bigl( [\omega|_M \oplus \omega_{S^2}] \bigr)$ 
agree when restricted to $Y_2$, so their difference lies in $j(H^2(Y_{2+}, Y_2))$,
where $H^2(Y_{2+}, Y_2)$ is a free abelian group with dual basis consisting of the $2$-handles $h^2_i$.
Since $D_i^2$ and $\Delta_i$ are homotopic relative to their boundary in $Y_3$, the spherical class $[D_i^2 \cup \Delta_i]$ is null-homotopic in $Y_3$.
This, combined with the fact that $\omega'([D^2_i \cup \Delta_i]) = 0$, ensures that
$[\omega'] - (g_{Y_{2+}})^*\bigl( [\omega|_M \oplus \omega_{S^2}] \bigr) = 0$.

We next attach $3$-handles to $Y_{2+}$. 
In order to apply Lemma \ref{weak_fill} we must ensure that the resulting symplectic form vanishes on the attaching $2$-sphere $S^2_a$. Let $\iota_{2+} \colon Y_{2+} \to Y$ be the inclusion.  
Since $S^2_a$ bounds a $3$-disc in $Y$, it follows that 
$$(g_{Y_{2+}})_*([S^2_a]) = (g_Y)_*(\iota_{2+})_*([S^2_a]) = 0 \in H_2(M \times S^2).$$
We then have
$$\langle (g_{Y_{2+}})^*\bigl( [\omega|_M \oplus \omega_{S^2}] \bigr), [S^2_a] \rangle =
\langle [\omega|_M \oplus \omega_{S^2}], (g_{Y_{2+}})_*([S^2_a]) \rangle 
= 0,$$
where the angular brackets denote the natural Kronecker pairing. Thus we can again apply Lemma \ref{weak_fill}. As above the cohomology class of the symplectic structure $\omega'$ is  just the restriction of $(g_{Y})^*\bigl( [\omega|_M \oplus \omega_{S^2}] \bigr)$. Inductively applying Lemma \ref{weak_fill} to the remaining handles completes the argument.\end{proof}




\subsection{The surgery obstruction and topological Stein cobordisms} 
\label{subsec:obstruction}
In this subsection we briefly recall the main construction
of~\cite{BCS2}. We then extend this point of view and identify the
``topological Stein envelope'' of an almost contact manifold,
i.e.\,\,those almost contact manifolds which can be obtained from a given one
via a topological Stein cobordism.

Recall that an almost contact structure $\acs$ on a $(2q{+}1)$-manifold $M$ can be regarded
as a map $\acs\colon M^{2q{+}1}\to BU(q)$, which lifts the classifying map 
$\tau \colon M^{2q{+}1}\to BSO(2q{+}1)$ of the tangent bundle of $M$. 
We then stabilize $\acs$ and pass to the corresponding complex normal
structure, which is an equivalence class of maps $\zeta \colon M\to BU$, 
which lift the stable normal Gauss map $\nu \colon M\to BSO$.
 
For a fixed integer $k$, the map $\zeta \colon M\to BU$ admits a
\emph{Postnikov factorization} $(B^k_{\zeta}, \eta ^k_{\zeta}, \bar
\zeta)$ with the following properties: these maps and spaces fit into
the commutative diagram
\[ \xymatrix{  & B^k_{\zeta} \ar[d]^{\eta^k_{\zeta}}\\
M \ar[r]^-{\zeta} \ar[ur]^-{\bar \zeta} & BU,} \]
and satisfy the following conditions:
\begin{enumerate}
\item $\eta^k_{\zeta}$ is a Serre fibration,
\item $\bar \zeta$ is a $(k{+}1)$-equivalence, that is, it induces an
isomorphism on $\pi _i$ for all $i<k{+}1$ and a surjection for $i=k{+}1$,
and
\item $\eta ^k _{\zeta}$ is a $(k{+}1)$-coequivalence, that is, it induces an
isomorphism on $\pi _i $ for $i>k{+}1$ and an injection for $i=k{+}1$.
\end{enumerate}
The existence of these spaces and maps are proved in \cite[Chapters 2 \& 5]{Baues77}.
The pair $(B^k_{\zeta}, \eta ^k _{\zeta})$ is unique (up to fiber
homotopy equivalence),  and we call them the {\em complex normal $k$-type} of the stable
complex manifold $(M,\zeta)$.
The map $\bscxs \colon M \to B^k_\scxs$ is called a {\em $\scxs$-compatible}
normal smoothing and is not, in general, uniquely determined by $\scxs$.
The only explicit complex normal $k$-types we will use in this paper 
are covered by the following

\begin{Example}[cf.~{\cite[Example 2.5]{BCS2}}] \label{exa:normal-type}
We take the stable complex bundle 
$(B^k_\scxs, \eta^k_\scxs) = (BU\an{k{+}1}, \pi_{k{+}1})$ where
the map $\pi_{k{+}1} \colon BU\an{k{+}1} \to BU$ is the $k$-fold
connective covering of $BU$. Recall that $BU\an{k{+}1}$ is the space
whose homotopy groups are trivial in degree $i \le k-1$ and such that
$\pi_{k{+}1}$ induces a surjection on the $k$-th homotopy group and
isomorphisms for all higher homotopy groups.  We denote the bordism
groups $\Omega_*(BU\an{k{+}1}; \pi_{k{+}1})$ by $\Omega_*^{U\an{k}}$. When
$k = 3$, we have  that $\Omega_*(BU\an{4}; \pi_4) = \Omega_*^{SU}$ is just
special unitary bordism as consider in \cite[Chapter X]{Stong1}.
\end{Example}

For an almost contact $(2q{+}1)$-manifold $(M,\varphi )$ with its induced
stable complex structure $\zeta$ we consider the associated complex normal
$(q{-}1)$-type $(B^{q{-}1}_{\zeta}, \eta ^{q{-}1}_{\zeta})$.
The map $\bar \zeta$ then provides a bordism class $[M, \bar \zeta]$
in the bordism group $\Omega _{2q{+}1}(B^{q{-}1}_{\zeta}; \eta
^{q{-}1}_{\zeta})$. For a detailed discussion of this group
see~\cite{BCS2}. 


{\em A priori} the bordism class $[M, \bar \zeta ]$ depends on the
choice of $(q{-}1)$-smoothing $\bscxs$, but we call any such class 
an \emph{obstruction class},
since --- according to the next theorem --- $[M, \bscxs]$ vanishes 
if and only if the almost contact structure $\varphi$ can be
represented by a Stein fillable contact structure.

\begin{Theorem}[{\cite[Theorem~1.2]{BCS2}}] 
\label{thm:stein}
A closed almost contact manifold $(M, \acs)$ of dimension
$2q{+}1\geq5$ admits a Stein fillable contact structure homotopic to
the almost contact structure $\varphi$ if and only if $[M, \bscxs] = 0
\in \Omega_{2q{+}1}(B^{q{-}1}_\scxs; \eta^{q{-}1}_\scxs)$ for any,
equivalently for all, choices of $\bscxs$, where $\zeta $ is the stabilization
of $\varphi$. \qed
\end{Theorem}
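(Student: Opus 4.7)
The plan is to prove the two implications separately. The forward direction is a soft obstruction-theoretic argument: the Stein filling itself, together with a lift of its stable complex normal classifying map, provides the required nullbordism. The backward direction is harder and proceeds by using Kreck's modified surgery theory to upgrade a general nullbordism to one whose underlying manifold admits the kind of handle decomposition required by Eliashberg's $h$-principle (Theorem~\ref{thm:h-principle}), which then supplies a Stein filling.

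For the forward direction, let $(W,J)$ be a Stein filling of a contact structure homotopic to $\varphi$. Then $W$ admits a handle decomposition from $\emptyset$ with handles of index at most $q+1$, and by Poincar\'e--Lefschetz duality $H^k(W,M)\cong H_{2q+2-k}(W)=0$ for $k\leq q$, so $(W,M)$ is $q$-connected. The complex structure $J$ yields a classifying map $\zeta_W\colon W\to BU$ restricting to $\zeta$ on $M$, and I would construct a relative lift $\bar\zeta_W\colon W\to B^{q-1}_\zeta$ of $\zeta_W$ extending $\bar\zeta$. Since $\eta^{q-1}_\zeta$ is a $q$-coequivalence, its homotopy fiber $F$ satisfies $\pi_iF=0$ for $i\geq q$, so the successive obstructions to such a relative lift lie in $H^{i+1}(W,M;\pi_iF)$ with $i<q$; these all vanish by the $q$-connectivity of $(W,M)$. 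The pair $(W,\bar\zeta_W)$ is then a nullbordism of $(M,\bar\zeta)$ in $\Omega_{2q+1}(B^{q-1}_\zeta;\eta^{q-1}_\zeta)$.

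For the backward direction, assume $[M,\bar\zeta]=0$ and pick a nullbordism $(W,\bar\zeta_W)$ with $\partial(W,\bar\zeta_W)=-(M,\bar\zeta)$. Since $\dim W=2q+2$, Kreck's surgery-below-the-middle-dimension argument applies: by a finite sequence of interior surgeries of $W$ along framed embedded $i$-spheres with $i\leq q$, one can modify $W$ (rel $\partial$) so that $\bar\zeta_W$ itself becomes a $q$-equivalence, without changing the boundary or the bordism class. Combined with $\bar\zeta$ being a $q$-equivalence, the long exact sequence of the pair forces $(W,M)$ to be $q$-connected, so $W$ admits a handle decomposition relative to $M$ using only handles of index at least $q+1$; dualizing gives a handle decomposition from $\emptyset$ with handles of index at most $q+1$. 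Theorem~\ref{thm:h-principle} then homotopes the almost complex structure encoded by $\bar\zeta_W$ to a genuine Stein structure on $W$, and the induced contact structure on $M$ is homotopic to $\varphi$ because the underlying homotopy class of stable almost contact structure is preserved throughout, together with the equivalence~\eqref{eq:stable_and_unstable_Stein_cobordism} between stable and unstable versions of the cobordism relation.

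Independence of $\bar\zeta$ is formal: any two $(q-1)$-smoothings of $\zeta$ differ by a fiber-homotopy self-equivalence $h$ of $B^{q-1}_\zeta$ over $BU$, and $h_*$ induces a bijection on $\Omega_{2q+1}(B^{q-1}_\zeta;\eta^{q-1}_\zeta)$ sending one choice of class to the other, so the vanishing of the obstruction is independent of the smoothing. The main technical obstacle is the surgery step in the backward direction: each element of $\ker(\pi_i(W)\to\pi_i(B^{q-1}_\zeta))$ with $i\leq q$ that we wish to kill must be representable by a framed embedded sphere whose normal structure is compatible with the lift $\bar\zeta_W$, and the surgeries must be performed inductively while preserving the bordism datum. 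This is precisely the content of Kreck's formalism, and it is where all the real work lies.
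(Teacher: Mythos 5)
This theorem is stated in the paper with a \qed because it is cited from \cite[Theorem~1.2]{BCS2}, not reproved here; the closest in-paper analogue is the proof of Theorem~\ref{thm:surgery_theorem_alternative}, which generalizes this statement and whose argument runs along exactly the lines you propose: obstruction theory over $(W,M)$ for the ``only if'' direction, and surgery below the middle dimension $+$ Wall's handle rearrangement $+$ Eliashberg's $h$-principle for the ``if'' direction. So your overall strategy matches the paper's.

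There is, however, a calibration error in your backward direction that breaks the key deduction. You claim surgery below the middle dimension makes $\bscxs_W$ a \emph{$q$-equivalence}, and then that this, together with $\bscxs$ being a $q$-equivalence, forces $(W,M)$ to be $q$-connected via the long exact sequence of the pair. That inference fails: with both maps merely $q$-equivalences, one has surjections $\pi_q(M)\twoheadrightarrow\pi_q(B^{q{-}1}_\scxs)$ and $\pi_q(W)\twoheadrightarrow\pi_q(B^{q{-}1}_\scxs)$, but without injectivity of $\pi_q(W)\to\pi_q(B^{q{-}1}_\scxs)$ there is no reason for $\pi_q(M)\to\pi_q(W)$ to be onto, and $\pi_q(W,M)$ may well be nonzero. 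What you actually need, and what surgery below the middle dimension delivers when $\dim W = 2q{+}2 = 2(q{+}1)$, is a \emph{$(q{+}1)$-equivalence} $\bscxs_W$ — an isomorphism on $\pi_i$ for $i\leq q$ and a surjection on $\pi_{q+1}$. This is precisely what the paper asserts in the proof of Theorem~\ref{thm:surgery_theorem_alternative} (``we can assume that $\bscxs\colon W\to B^{q{-}1}_{\scxs_1}$ is a $(q{+}1)$-equivalence''). With $\bscxs_W$ a $(q{+}1)$-equivalence the diagram chase gives $\pi_q(M)\to\pi_q(W)$ surjective and $\pi_{q-1}(M)\to\pi_{q-1}(W)$ injective, whence $\pi_q(W,M)=0$. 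The surgeries you describe (on $i$-spheres with $i\leq q$) do in fact produce a $(q{+}1)$-equivalence; you have simply stated the output one notch too weakly, and the weaker statement makes the subsequent connectivity claim a non sequitur.

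A smaller point on the forward direction: the Lefschetz duality computation $H^k(W,M)\cong H_{2q+2-k}(W)=0$ for $k\leq q$ only gives vanishing integral cohomology, and by universal coefficients this yields $H_k(W,M)=0$ for $k\leq q{-}1$ but leaves open the possibility of torsion in $H_q(W,M)$, so it does not by itself establish $q$-connectivity. The cleaner route, already implicit in your first sentence, is to turn the handle decomposition upside down: $W$ is then built from $M$ by attaching handles of index $\geq q{+}1$, so $(W,M)$ is $q$-connected directly and $H^{i+1}(W,M;A)=0$ for $i{+}1\leq q$ and all coefficient groups $A$, which is exactly what the obstruction theory needs.
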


\begin{Remark} \label{rem:applying_the_obstruction_class}
The applicability of the obstruction class described above hinges on computations of
the bordism group $\Omega_{2q{+}1}(B^{q{-}1}_{\zeta_1};
\eta^{q{-}1}_{\zeta_1})$, which is a highly nontrivial matter in
general. For simply connected 7-manifolds with torsion free second homotopy group
\cite[Theorem 1.3]{BCS2} shows that $\Omega_7(B^2_{\zeta},
\eta^2_{\zeta}) = 0$; implying that all such almost contact
7-manifolds are Stein fillable.  For $(q{-}1)$-connected
$(2q{+}1)$-manifolds further calculations of these bordism groups will be presented in
\cite{BCS3}.
\end{Remark}

In terms of the topological Stein cobordism relation given in Definition 
\ref{def:topological-Stein{-}cobordism},
Theorem \ref{thm:stein} states that $(S^{2k{+}1}, \scxs_{std}) \prec (M, \scxs)$ if and only if an obstruction class $[M, \bscxs]$ vanishes.  
We now extend Theorem \ref{thm:stein} to give a bordism theoretic determination of the topological Stein cobordism relation for any pair of closed $(2q{+}1)$-dimensional stably complex manifolds $(M_0, \scxs)$ and $(M_1, \scxs_1)$.

\begin{Theorem} \label{thm:surgery_theorem_alternative}
There is a topological Stein cobordism $(W, \zeta)$ from $(M_0, \zeta_0)$ to $(M_1, \zeta_1)$, i.e.~$(M_0, \zeta_0) \prec (M_1, \zeta_1)$,
if and only if there is a map $\alpha$ of fibrations over $BU$ 
\[ \xymatrix{ B^{q{-}1}_{\zeta_0} \ar[dr]_{\eta^{q{-}1}_{\zeta_0}} \ar[rr]^{\alpha} && 
B^{q{-}1}_{\zeta_1} \ar[dl]^{\eta^{q{-}1}_{\zeta_1}} \\ & BU } \]
and $\zeta_i$-compatible normal $(q{-}1)$-smoothings $ \bar \zeta_i \colon M_i \to B^{q{-}1}_{\zeta_i}$ 
such that
\[ \alpha_*([M_0, \bar \zeta_0]) = [M_1, \bar \zeta_1] \in 
\Omega_{2q{+}1}(B^{q{-}1}_{\zeta_1}; \eta^{q{-}1}_{\zeta_1}). \]
\end{Theorem}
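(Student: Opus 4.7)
The plan is to extend Kreck's modified surgery theory, as used in the proof of Theorem~\ref{thm:stein} in \cite{BCS2}, from the case of nullbordisms to the case of general cobordisms between stably complex manifolds. The required equivalence between the (stably) almost complex and stably complex Stein cobordism relations is already recorded in \eqref{eq:stable_and_unstable_Stein_cobordism}, so throughout we work in the stably complex category.

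For the forward direction, suppose $(W, \zeta_W)$ is a topological Stein cobordism from $(M_0, \zeta_0)$ to $(M_1, \zeta_1)$. Since $W$ is built from $M_0 \times I$ using handles of index at most $q{+}1$, dually it is built from $M_1 \times I$ with handles of index at least $q{+}1$, so the inclusion $M_1 \hookrightarrow W$ is $q$-connected. Fix a $\zeta_1$-compatible normal $(q{-}1)$-smoothing $\bar{\zeta}_1 \colon M_1 \to B^{q-1}_{\zeta_1}$. The homotopy fiber $F$ of $\eta^{q-1}_{\zeta_1}$ is $(q{-}1)$-truncated since $\eta^{q-1}_{\zeta_1}$ is a $q$-coequivalence, so the obstructions to lifting $\zeta_W$ to some $\bar{\zeta}_W$ extending $\bar{\zeta}_1$ lie in $H^{i+1}(W, M_1; \pi_i(F))$; these groups vanish because either $i \geq q$ (so $\pi_i(F) = 0$) or $i{+}1 \leq q$ (so the relative cohomology vanishes by $q$-connectedness of $(W, M_1)$). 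Hence $\bar{\zeta}_W$ exists. Its restriction to $M_0$ is a lift of $\zeta_0$ over $\eta^{q-1}_{\zeta_1}$, which we factor as $M_0 \to B' \to B^{q-1}_{\zeta_1}$ via the Moore--Postnikov construction, with $M_0 \to B'$ a $q$-equivalence and $B' \to B^{q-1}_{\zeta_1}$ a $q$-coequivalence. Composing with $\eta^{q-1}_{\zeta_1}$ presents $B'$ as a Moore--Postnikov factorization of $\zeta_0$, so by uniqueness of normal types $B'$ is fiber-homotopy equivalent to $B^{q-1}_{\zeta_0}$ over $BU$; this produces both the desired $\alpha \colon B^{q-1}_{\zeta_0} \to B^{q-1}_{\zeta_1}$ and a compatible normal smoothing $\bar{\zeta}_0$ satisfying $\alpha \circ \bar{\zeta}_0 = \bar{\zeta}_W|_{M_0}$. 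The bordism $(W, \bar{\zeta}_W)$ then witnesses $\alpha_*([M_0, \bar{\zeta}_0]) = [M_1, \bar{\zeta}_1]$ in $\Omega_{2q+1}(B^{q-1}_{\zeta_1}; \eta^{q-1}_{\zeta_1})$.

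For the backward direction, a representative of the equation $\alpha_*([M_0, \bar{\zeta}_0]) = [M_1, \bar{\zeta}_1]$ is a stably complex cobordism $(W, \bar{\zeta}_W)$ with $\partial W = -M_0 \sqcup M_1$ and boundary values $\alpha \circ \bar{\zeta}_0$ and $\bar{\zeta}_1$. Exactly as in the proof of Theorem~\ref{thm:stein}, Kreck's modified surgery (performed below the middle dimension, which is permitted since $2q{+}2 \geq 6$) allows us to alter $W$ in its interior so that $\bar{\zeta}_W$ becomes a $q$-equivalence; the surgery is done in the category of $B^{q-1}_{\zeta_1}$-manifolds and so preserves the stable complex structure $\zeta_W = \eta^{q-1}_{\zeta_1} \circ \bar{\zeta}_W$. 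Since $\bar{\zeta}_1$ is also a $q$-equivalence, a standard comparison of homotopy long exact sequences shows that $(W, M_1)$ is algebraically $q$-connected. Wall's handle cancellation argument, in the form of \cite[Theorem~2.18]{BCS2}, then yields a handle decomposition of $W$ relative to $M_1 \times I$ with all handles of index at least $q{+}1$, or dually, a handle decomposition relative to $M_0 \times I$ with all handles of index at most $q{+}1$. This exhibits $(W, \eta^{q-1}_{\zeta_1} \circ \bar{\zeta}_W)$ as a topological Stein cobordism from $(M_0, \zeta_0)$ to $(M_1, \zeta_1)$.

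The main technical step is the surgical improvement of $\bar{\zeta}_W$ to a $q$-equivalence in the backward direction, which is the heart of Kreck's modified surgery machine and is the very ingredient that already powers the proof of Theorem~\ref{thm:stein}. The genuinely new point in the present argument is the construction of $\alpha$ in the forward direction: the $q$-connectedness of $M_1 \hookrightarrow W$ allows the normal $(q{-}1)$-smoothing of $M_1$ to be extended across $W$, and Moore--Postnikov factorization of the resulting restriction to $M_0$ then produces $\alpha$ and a compatible smoothing $\bar{\zeta}_0$ as required.
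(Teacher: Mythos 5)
Your forward direction (topological Stein cobordism $\Rightarrow$ existence of $\alpha$ and the bordism identity) is correct and is essentially the paper's argument made explicit: you lift $\bar\zeta_1$ across $W$ via obstruction theory using the $(q{-}1)$-truncatedness of the fiber of $\eta^{q-1}_{\zeta_1}$ together with the $q$-connectedness of $M_1 \hookrightarrow W$, and then Moore--Postnikov factorize the restriction to $M_0$; the paper instead invokes functoriality of normal types directly, producing $Bi_0$ and $Bi_1$ and noting that $Bi_1$ is an equivalence, but the geometric input is the same.

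There is, however, a gap in your backward direction. You alter $W$ by surgery so that $\bar\zeta_W \colon W \to B^{q-1}_{\zeta_1}$ becomes only a \emph{$q$-equivalence}, and then claim that, since $\bar\zeta_1$ is also a $q$-equivalence, a comparison of homotopy long exact sequences shows that $(W, M_1)$ is algebraically $q$-connected. This inference fails at degree $q$: a $q$-equivalence gives only a surjection $\pi_q(W) \twoheadrightarrow \pi_q(B^{q-1}_{\zeta_1})$, not an isomorphism, so an element of $\pi_q(W)$ with the same image as $i_{1*}(z)$ in $\pi_q(B^{q-1}_{\zeta_1})$ need not equal $i_{1*}(z)$, and the surjectivity of $\pi_q(M_1) \to \pi_q(W)$ does not follow. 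What is needed (and what the paper uses, citing \cite[Proposition~2.6]{BCS2}) is that surgery below the middle dimension on a $(2q{+}2)$-manifold makes $\bar\zeta_W$ a \emph{$(q{+}1)$-equivalence}, i.e.\ an isomorphism on $\pi_i$ for $i \leq q$. With that, $\pi_q(M_1) \twoheadrightarrow \pi_q(B^{q-1}_{\zeta_1}) \xleftarrow{\cong} \pi_q(W)$ does give surjectivity of $i_{1*}$ on $\pi_q$, and hence that $(W, M_1)$ is $q$-connected, after which Wall's handle-cancellation theorem applies exactly as you say. This is a one-word fix but, as written, your deduction is not valid.
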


\begin{proof}
Suppose that $\alpha_*([M_0, \bar \zeta_0]) = [M_1, \bar \zeta_1]$ and let $(W,\bar \zeta)$ be a $(B^{q{-}1}_{\scxs_1}, \eta^{q{-}1}_{\scxs1})$-nullcobordism of 
$(-M_0, -\alpha \circ \bscxs_0) \sqcup (M_1, \bscxs_1)$. 
Applying surgery below the middle dimension \cite[Proposition 2.6]{BCS2}, 
we can assume that $\bscxs \colon W \to B^{q{-}1}_{\scxs_1}$ is 
a $(q{+}1)$-equivalence. It follows from a result of Wall \cite[Theorem 2.18]{BCS2} that $W$ is built 
from $M_1$ by attaching handles of index $\ge q{+}1$ and dually that $W$ is obtained from $M_0$ 
by attaching handles of index at most $q{+}1$, verifying one direction of the equivalence. 


Conversely, if $(W, \zeta)$ is a topological Stein cobordism with boundary
$(-M_0, -\scxs_0) \sqcup (M_1, \scxs_1)$, then 
the universal properties of Postnikov factorizations 
\cite[Chapters 2 \& 5]{Baues77} mean that there
is a homotopy commutative diagram,
\[ \xymatrix{ -M_0 \ar[d]_{-\bscxs_0} \ar[r]^{i_0} & W \ar[d]^{\bscxs} & 
M_1 \ar[d]^{\bscxs_1} \ar[l]_{i_1} \\
B^{q{-}1}_{\scxs_0} \ar[dr] \ar[r]^{Bi_0} & B^{q{-}1}_{\scxs} \ar[d] & 
B^{q{-}1}_{\scxs_1} \ar[dl] \ar[l]_{Bi_1} \\
& BU&
,  } \]
where $\bscxs_0, \bscxs_1$ and $\bscxs$ are all $(q{-}1)$-smoothings and
for $j = 0, 1$, $i_j \colon M_j \to W$ are the inclusions and $Bi_j$
are the corresponding induced maps of complex normal $(q{-}1)$-types.
Since $W$ is obtained from $M_1$ by the addition of handles on index
$(q{+}1)$ or larger, the proof of \cite[Lemma 2.9 (3)]{BCS2}
shows that $Bi_1$ is an equivalence of complex normal $(q{-}1)$-types.
We then set $\alpha$ to be the following map of fibrations over $BU$:
\[ \alpha  := (Bi_1)^{-1} \circ Bi_0 \colon B^{q{-}1}_{\scxs_0} \to B^{q{-}1}_{\scxs_1},\]
where $(Bi_1)^{-1}$ denotes a homotopy inverse of $Bi_1$. By definition, $(W, \bscxs)$ is a $(B^{q{-}1}_{\scxs_1}, \eta^{q{-}1}_{\scxs_1})$-bordism
which gives $\alpha_*([M_0, \bscxs_0]) = [M_1, \bscxs_1]$.
%
%
%
\end{proof}

\begin{Remark}
Notice that \cite[Theorem~3.8]{BCS2} is a direct consequence of the above result (by taking
$\alpha ={\rm id}$). While \cite[Theorem~3.8]{BCS2} is symmetric for 
$(M_0, \zeta _0)$ and $(M_1, \zeta _1)$, in Theorem~\ref{thm:surgery_theorem_alternative} 
the direction
of the map $\alpha$ breaks this symmetry.
The above extension of our earlier result  was suggested by a question from 
Andy Wand in Nantes in September 2013.
\end{Remark}

\section{Maximal almost contact manifolds}
\label{sec:max}

In dimension three the Stein cobordism relation has several interesting
properties, one of which is that there are \emph{initial} elements: There
exists a contact manifold $(M_{min},\xi_{min})$ such that for any other contact 3-manifold
 $(M, \xi )$ we have 
$$(M_{min},\xi_{min}) \prec (M,\xi).$$ 
In fact, by \cite{EtHond} any overtwisted contact
structure on any manifold will do (see also \cite{GeigesZemisch}). On the other hand, in
high dimensions, i.e.\ for $\dim (M) = 2q{+}1 \ge 5$ there exist
\emph{final} almost contact elements. It is not clear whether such objects exist in
dimension three.

The next proposition provides a proof of
Theorem~\ref{thm:maximalelement} from the Introduction.

\begin{Proposition}\label{prop:final_elements}
In every dimension $2q{+} 1 \ge 5$ there exists an almost contact
manifold $(M_{{max}},\varphi_{{max}})$ so that for any almost contact manifold $(M, \varphi  )$ we have
$$(M ,\varphi  ) \prec (M_{{max}},\varphi_{{max}}).$$
 Moreover, in dimensions $5$ and $7$ we can take certain almost contact structures on the non{-}trivial sphere bundles over $S^2$ as final elements:
$$(M_{{max}},\varphi_{{max}}) = (S^3 \tilde \times S^2,\varphi_{{max}}) , \quad (M_{{max}},\varphi_{{max}}) = (S^5 \tilde \times S^2,\varphi_{{max}}),$$
where $\varphi_{{max}}$ is any almost contact structure whose first Chern class is primitive.
\end{Proposition}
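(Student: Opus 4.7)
The plan is to apply Theorem~\ref{thm:surgery_theorem_alternative} and exploit the vanishing of complex bordism in odd degrees. Specifically, I would aim to choose $(M_{max}, \varphi_{max})$ whose complex normal $(q{-}1)$-type is $BU$ itself---equivalently, whose stable classifying map $\zeta_{max}\colon M_{max} \to BU$ is a $q$-equivalence. Granting this, the maximality is almost immediate: for any other almost contact $(2q{+}1)$-manifold $(M,\varphi)$ with stabilization $\zeta$, the structure map $\eta^{q-1}_\zeta\colon B^{q-1}_\zeta \to BU = B^{q-1}_{\zeta_{max}}$ is automatically a map of fibrations over $BU$, so one takes $\alpha := \eta^{q-1}_\zeta$ in Theorem~\ref{thm:surgery_theorem_alternative}. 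The required bordism condition
\[
\alpha_*([M, \bar\zeta]) = [M_{max}, \bar\zeta_{max}] \in \Omega_{2q+1}(BU) = \Omega_{2q+1}^U
\]
then takes place in a group that vanishes, since $\Omega_*^U$ is concentrated in even degrees, and so $(M,\varphi) \prec (M_{max}, \varphi_{max})$ follows.

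For the explicit models in low dimensions, the $q$-equivalence condition has a transparent topological meaning. In dimension $5$ ($q{=}2$) it asserts that $M_{max}$ is simply connected and $c_1(\varphi_{max})\colon \pi_2(M_{max}) \to \pi_2(BU) = \Z$ is surjective; in dimension $7$ ($q{=}3$) it additionally requires $c_1$ to induce an isomorphism on $\pi_2$, which for a manifold with $\pi_2 = \Z$ is again the primitivity condition. I would verify that both $S^3 \tilde\times S^2$ and $S^5 \tilde\times S^2$ satisfy $\pi_1 = 0$ and $\pi_2 = \Z$ (the latter via the long exact sequence of the sphere bundle over $S^2$), and that each admits an almost contact structure with primitive first Chern class; this last fact is a direct obstruction-theoretic computation using that the set of stable almost contact structures on $M_{max}$ is a torsor over $[M_{max}, SO/U]$, combined with the effect of the generator of $\pi_2(SO/U)$ on $c_1 \in H^2(M_{max};\Z) = \Z$ coming from the Bott fibration $SO/U \to BU \to BSO$.

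For general $2q{+}1 \geq 5$, I would produce $M_{max}$ by a surgery-theoretic bootstrap. Starting from a convenient simply connected almost complex $(2q{+}1)$-manifold with primitive $c_1$ (e.g.\ $S^{2q-1} \tilde\times S^2$ with a suitable almost contact structure), iteratively apply Stein surgeries of index at most $q{+}1$---realizable as almost complex handle attachments by Theorem~\ref{thm:h-principleforhandles}---to adjust the low-dimensional homotopy: surgery along an $i$-sphere with $i \le q{-}1$ kills a class in $\pi_i$, while surgery along a trivially embedded $(2j{-}1)$-sphere effects a connect sum with $S^{2j} \times S^{2q-2j+1}$, introducing the required $\Z$-generator of $\pi_{2j}(BU)$ for each $4 \le 2j \le q{-}1$. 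After finitely many such moves the outgoing boundary has the low-dimensional homotopy of $BU$ and its classifying map is a $q$-equivalence.

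The main obstacle will be coordinating these moves in the general case so that the resulting $M_{max}$ has exactly the low-dimensional homotopy of $BU$ without unwanted residual classes in intermediate degrees---a delicate but standard bookkeeping exercise in surgery theory. The essential simplification is that all operations occur well below the middle dimension of the bounding $(2q{+}2)$-manifold, where almost complex structures extend freely by the $h$-principle and connect-sum summands contribute to $\pi_i$ additively for $i$ in the relevant range.
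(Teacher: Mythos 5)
Your proof takes essentially the same approach as the paper: choose $M_{max}$ with stable classifying map $\zeta_{max}\colon M_{max}\to BU$ a $q$-equivalence (so the complex normal $(q{-}1)$-type is $BU$ itself), take $\alpha := \eta^{q-1}_\zeta$ in Theorem~\ref{thm:surgery_theorem_alternative}, and conclude from the vanishing of $\Omega^U_{2q+1}$. The only difference is that the paper constructs $M_{max}$ by simply citing surgery below the middle dimension \cite[Proposition 2.6]{BCS2} applied to any stably complex $(2q{+}1)$-manifold, whereas you sketch an explicit handle-by-handle bootstrap; your route is the unrolled version of the same surgery argument, and the ``delicate bookkeeping'' you flag is precisely what that cited proposition handles.
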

\begin{proof}
For $(M_{{max}},\varphi_{{max}})$ we can take any almost contact $(2q{+}1)$-manifold where the corresponding 
stable complex manifold $(M_{{max}},\scxs_{max})$ has complex normal $(q{-}1)$-type $B^{q{-}1}_{\scxs_{max}}= BU$ and the map to $BU$ is just the identity. 
To construct such a manifold, we begin
with {\em any} stably complex $(2q{+}1)$-manifold $\scxs \colon M \to BU$ 
and apply surgery below the middle dimension 
\cite[Proposition 2.6]{BCS2} to obtain a stably complex manifold $(M_{max}, \zeta_{max})$ 
where $\zeta_{max} \colon M_{max} \to BU$ is a $q$-equivalence, which then has the desired complex normal $(q{-}1)$-type. 
We then take $\acs_{max}$, to be any almost contact structure which stabilizes to $\zeta_{max}$, which exists by \cite[Lemma 2.17]{BCS2}.

Now let $(M, \acs)$ be any almost contact $(2q{+}1)$-manifold with stable complex
structure $\zeta$, complex normal $(q{-}1)$-type $(B^{q{-}1}_\zeta, \eta^{q{-}1}_\zeta)$
and with $\zeta$-compatible normal $(q{-}1)$-smoothing $\bscxs \colon M \to B^{q{-}1}_\scxs$.
By definition, $\eta^{q{-}1}_\zeta \colon B^{q{-}1}_\zeta \to BU$ is a fibration and we set
$\alpha = \eta^{q{-}1}_\zeta$. Bordism of $BU$-manifolds is just
ordinary complex bordism, and by \cite[p.117]{Stong1} the
odd bordism groups $\Omega^U_{2q{+}1}$ are trivial, which implies that 
$\alpha_*([M, \bscxs]) = 0 = [M_{max}, \scxs_{max}]$.
By Theorem \ref{thm:surgery_theorem_alternative} it follows that $(M, \scxs) \prec (M_{max}, \scxs_{max})$
and due to the equivalence given in \eqref{eq:stable_and_unstable_Stein_cobordism} above we finally conclude that $(M, \acs) \prec (M_{max}, \acs_{max})$.

In dimensions $5,7$ one checks that the explicit manifolds stated in
the proposition have the correct complex normal $1$-
resp.\ $2$-types. For this, note that $\pi_1(BU) = \pi_3(BU) = 0,
\pi_2(BU) = \Z$ and the assumption that $c_1 $ is primitive ensures
that the second homotopy group of the associated type is $\Z$.
\end{proof}

\begin{Remark} \label{rem:maximal_is_not_unique}
The almost contact manifold $(M_{max}, \varphi_{max})$ is far from being unique.
Indeed, if $(M_{max}, \varphi_{max}) \prec (M', \varphi')$ then $(M', \varphi')$ is also maximal for the topological Stein cobordism relation.
For example, for any Stein fillable almost contact manifold $(M_0, \varphi_0)$,
$(M_{max} \# M_0, \varphi_{max} \# \varphi_0)$ is also maximal. Note also that 
$(M_{max}, \varphi_{max})$ is necessarily Stein fillable (that is, contains a Stein fillable
contact structure), shown by the Stein cobordism from, say, the standard contact sphere to 
$(M_{max}, \varphi_{max})$.
\end{Remark}

\begin{Remark} \label{rem:contact_max}
At the level of contact structures, it seems very unlikely that the 
analogue of Proposition \ref{prop:final_elements} holds.
Specifically, it seems unlikely that 
there is a single contact $(2n{+}1)$-manifold $(M_{max}, \xi_{max})$
such that for  every contact $(2n{+}1)$-manifold $(M, \xi)$ the manifold
 $(M_{max}, \xi_{max})$
is the out-going end of some 
Stein cobordism starting from $(M, \xi)$.
However by Theorem \ref{thm:h-principleforhandles} we have the following:
For any maximal almost contact manifold  $(M_{max}, \varphi_{max})$ of dimension $(2q{+}1) \geq 5$,
and for any 
contact $(2q{+}1)$-manifold $(M, \xi)$, $(M_{max}, \varphi_{max})$ admits 
a contact structure $\xi_{(M, \xi)}$
and a Stein cobordism from $(M, \xi)$ to $(M_{max}, \xi_{(M, \xi)})$.
\end{Remark}

A further interesting special case of the Stein cobordism relation
occurs for ``Calabi-Yau'' almost contact structures on $5$-manifolds and $7$-manifolds.

\begin{Proposition}\label{Calabi_Yau}
Let $n = 5$ or $7$ and let $(M,\varphi)$ be an almost contact $n$-manifold 
such that $c_1(\varphi) = 0$. Then
$$(M,\varphi) \prec (S^n,\varphi_{std}),$$ where $\varphi_{std}$ denotes
the almost contact structure underlying the standard contact structure on $S^n$ and the Stein cobordism
$(W,J)$ can be assumed to have $c_1(J) = 0$.
\end{Proposition}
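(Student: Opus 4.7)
The plan is to apply Theorem~\ref{thm:surgery_theorem_alternative} with $(M_0, \zeta_0) := (M, \zeta)$ and $(M_1, \zeta_1) := (S^n, \zeta_{std})$, where $\zeta$ and $\zeta_{std}$ denote the stabilizations of $\varphi$ and $\varphi_{std}$. Since $S^n$ is $(n{-}1)$-connected, a direct homotopy-group check shows that the natural inclusion $BSU \hookrightarrow BU$, together with any lift $\bar \zeta_{std} \colon S^n \to BSU$ of $\zeta_{std}$, realizes the complex normal $(q{-}1)$-type of $(S^n, \zeta_{std})$.

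Next, I would construct a map of fibrations $\alpha \colon B^{q-1}_\zeta \to BSU$ over $BU$, which amounts to lifting $\eta^{q-1}_\zeta \colon B^{q-1}_\zeta \to BU$ through the fibration $BSU \to BU$. The obstruction is the class $(\eta^{q-1}_\zeta)^* c_1 \in H^2(B^{q-1}_\zeta; \Z)$, which pulls back via $\bar \zeta$ to $c_1(\zeta) = c_1(\varphi) = 0 \in H^2(M;\Z)$. Since $\bar \zeta$ is a $q$-equivalence with $q \ge 2$, it induces an isomorphism on $H_i(-; \Z)$ for $i < q$ and a surjection for $i = q$; the universal coefficient theorem then makes $\bar \zeta^* \colon H^2(B^{q-1}_\zeta; \Z) \to H^2(M; \Z)$ injective. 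Hence $(\eta^{q-1}_\zeta)^* c_1 = 0$ and the desired lift $\alpha$ exists.

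To conclude, I would apply Theorem~\ref{thm:surgery_theorem_alternative}: both $\alpha_*([M, \bar \zeta])$ and $[S^n, \bar \zeta_{std}]$ lie in $\Omega_n(BSU;\, \mathrm{incl}) = \Omega^{SU}_n$, which vanishes for $n = 5$ and $n = 7$ by the classical computations of special unitary bordism in \cite[Chapter X]{Stong1}. The resulting topological Stein cobordism $(W, \zeta_W)$ is built from an $SU$-nullbordism by surgery below the middle dimension (just as in the proof of Theorem~\ref{thm:surgery_theorem_alternative}), so $\zeta_W$ factors through $BSU$; the associated almost complex structure $J$ on $W$ therefore satisfies $c_1(J) = 0$, and the equivalence~\eqref{eq:stable_and_unstable_Stein_cobordism} yields $(M, \varphi) \prec (S^n, \varphi_{std})$.

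The main technical point is the injectivity of $\bar \zeta^*$ on $H^2$, which requires a little care when $M$ is not simply connected; apart from this, the argument essentially parallels the proof of Proposition~\ref{prop:final_elements}, with $\Omega^U_{2q+1}$ replaced by $\Omega^{SU}_n$ and the factorization through $BSU$ ensuring the vanishing of $c_1(J)$.
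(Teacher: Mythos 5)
Your proof is correct and follows essentially the same strategy as the paper's: identify the complex normal $(q{-}1)$-type of $(M,\zeta)$ as factoring through $BSU \to BU$, invoke Theorem~\ref{thm:surgery_theorem_alternative} with $\alpha$ the resulting map to $BSU$, and conclude from $\Omega^{SU}_5 = \Omega^{SU}_7 = 0$. The only difference is that where the paper simply cites \cite[Lemma 2.22(ii), Lemma 2.23]{BCS2} for the $BSU$-factorization, you re-derive it directly via the observation that the obstruction $(\eta^{q-1}_\zeta)^*c_1$ vanishes because $\bar\zeta^*$ is injective on $H^2$ (which, as you note, requires the homological form of the $q$-equivalence hypothesis together with the universal coefficient sequence); your treatment of $c_1(J)=0$ is also slightly more explicit than the paper's.
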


\begin{proof}
Let $n = 2q{+}1$, so that $q = 2$ or $3$ and let $\scxs$ be the stabilization of the almost
contact structure
$\varphi$.
Since $c_1(\varphi) = 0$, the complex normal $(q{-}1)$-type of $(M, \scxs)$ 
factors through
$BSU \to BU$ by \cite[Lemma 2.22 (ii), Lemma 2.23]{BCS2}.
Since the complex normal $(q{-}1)$-type of $(S^n,\varphi_{std})$ is $BSU \to BU$,
the lemma follows from Theorem
\ref{thm:surgery_theorem_alternative} and the fact that 
$\Omega_5^{SU} = \Omega_7^{SU}= 0$; see \cite[p.\,248]{Stong1}.
\end{proof}
\begin{Remark}
 In contrast to the 3-dimensional case, there can be no minimal
 elements with respect to the Stein cobordism relation in dimension at
 least 5. For example if $(M_0, \varphi _0) \prec (M_1, \varphi _1)$ and $c_1(\varphi_1) = 0$, then the
 fact that $c_1(\varphi_1) = 0$ implies that $c_1(\varphi_0) =0$. To see this note that a topological Stein cobordism $(W,J)$ from $M_0$ to $M_1$ is obtained from $M_1$ by attaching high index
 handles, which means that $c_1(J) = 0$. But this would imply that an
 initial element $(M_{min},\varphi_{min}) $ must have $c_1(\varphi_{min})
 = 0$. A similar argument shows that for certain choices of $M_1$ the
 fact that $c^q_1(\varphi_1) \neq 0$ implies $c^q_1(\varphi_{min}) \neq 0$
 (cf.\ \cite[Proposition 6.2]{BCS2}). For this note that the inclusion
 of $M_1$ into $W$ gives an isomorphism on fundamental groups.
Suppose that $\beta \in H^1(B \pi_1(W))$ is a class such that when restricted to $M_1$
\[
p^*_1(\beta) \cup c^q_1(J)\neq 0, 
\] 
where $p_1\colon W \longrightarrow B \pi_1(W)$ is the classifying map
of the universal cover of $W$.
It follows that the restriction of $p^*_1(\beta) \cup c^q_1(J)\neq 0$ to $M_0=M_{min}$ is also non{-}zero and hence that $c_1(\varphi_{min}) = c_1(J)|_{M_{min}} \neq 0$.

In fact, these sorts of arguments show that there can be
no initial elements even if one forgets about the almost contact
structures, and one simply considers Pontrjagin classes rather than Chern
classes.
\end{Remark}

\section{Stein fillability and connected sums} 
\label{subsec:stein_fillability_and_connected_sums}

The connected sum of Stein fillable manifolds is again
Stein fillable, since adding a one-handle to the Stein fillings can
be done in a way that is compatible with Stein structures. Eliashberg \cite[Section 8]{Eliashberg90} has shown that the 
converse of this statement holds for 3-manifolds: a connected sum
of $3$-manifolds is Stein fillable if and only if both summands
are. In addition, the Stein fillable structures on the components
can be chosen so that their connected sum is isotopic to the given Stein fillable
structure on the connected sum.
(For a detailed description of the contact connected sum construction
see \cite[p. 301-302]{Geiges08}.) 
Extensions of some aspects of the above result of Eliashberg
to higher dimensions are
given in \cite{GNW}.

In this section we prove Theorem~\ref{thm:mainconnsum}, which shows 
that in higher dimensions the summands of a Stein fillable almost
contact manifold in a connected sum decomposition are not necessarily
Stein fillable.  To make this statement precise, we note that the summands are
well-defined only up to \emph{almost diffeomorphism}, that is, up to
connected sum with homotopy spheres.  Indeed, for $M_1\# M_2$ the
manifolds $M_1-{\rm {int}}\ D^n$ and $M_2- {\rm {int}}\ D^n$ can be
turned into closed manifolds (by gluing back $D^n$) in many different
ways, differing by connected sums with homotopy spheres.  Below we 
find examples  of almost contact connected sums
where the summands are not Stein fillable, even after the addition
of homotopy spheres.

For the proof of Theorem~\ref{thm:mainconnsum} we examine the Stein
fillability of certain almost contact structures on the unit cotangent
bundle $ST^*S^{2k{+}1}$ of the $(2k{+}1)$-sphere  $S^{2k{+}1}$.  We first need to
establish some preliminary results.  Lemma \ref{lem:cotangent} is a
small elaboration of a theorem of Milnor and Spanier about the
topology of $ST^*S^{2k{+}1}$.
Proposition \ref{prop:filling_cotangent_spheres} gives a 
description of the topology of possible Stein fillings of manifolds almost
diffeomorphic to $ST^*S^{2k{+}1}$, which may be of independent interest. 
Finally, Lemmas \ref{lem:c_k} and \ref{lem:non_filling_cotangent} show that $ST^*S^{2k{+}1}$
admits an almost contact structure $\varphi$ which is not Stein fillable,
provided $k \geq 5$ is odd.

\begin{Lemma}[cf.~{\cite[Theorem 2]{Milnor&Spanier}}] \label{lem:cotangent}
There is a map $f \colon ST^*S^{2k{+}1} \to S^{2k}$ such that the induced homomorphism
$f_* \colon H_{2k}(ST^*S^{2k{+}1}) \to H_{2k}(S^{2k})$ is an isomorphism if and only if $k = 0, 1$ or $ 3$.
\end{Lemma}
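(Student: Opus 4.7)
The plan is to prove the two directions separately and to reduce the converse to the classical result of Milnor and Spanier. Throughout I will write $\pi \colon ST^*S^{2k+1} \to S^{2k+1}$ for the bundle projection and $i \colon S^{2k} \to ST^*S^{2k+1}$ for the inclusion of a fibre.

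For the forward implication, suppose $k \in \{0, 1, 3\}$. Then $S^{2k+1}$ is parallelizable by the theorem of Bott--Milnor--Kervaire (equivalently, by Adams's solution to the Hopf invariant one problem), so a global trivialisation of $T^*S^{2k+1}$ gives a diffeomorphism $ST^*S^{2k+1} \cong S^{2k+1} \times S^{2k}$. Projecting onto the second factor yields an $f$ whose composition with $i$ is the identity of $S^{2k}$, and in particular $f_*$ is an isomorphism on $H_{2k}$.

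For the converse I would first exploit the Gysin sequence associated to $\pi$. Since $\chi(S^{2k+1}) = 0$ the Euler class vanishes, so the sequence degenerates into short exact pieces
\[
0 \to H^j(S^{2k+1}) \xrightarrow{\pi^*} H^j(ST^*S^{2k+1}) \to H^{j-2k}(S^{2k+1}) \to 0,
\]
which identifies $H^*(ST^*S^{2k+1}) \cong H^*(S^{2k+1}) \otimes H^*(S^{2k})$ as graded abelian groups, with a distinguished generator of $H^{2k}(ST^*S^{2k+1})$ that restricts under $i^*$ to a generator of $H^{2k}(S^{2k})$. The hypothesis on $f$ says precisely that $f \circ i$ has degree $\pm 1$, so $f^*$ of the generator of $H^{2k}(S^{2k})$ coincides, up to sign, with this distinguished class. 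Combining $f$ with $\pi$ and comparing with the K\"unneth decomposition of the target, the map
\[
(\pi, f) \colon ST^*S^{2k+1} \longrightarrow S^{2k+1} \times S^{2k}
\]
is then routinely seen to be an isomorphism on integral cohomology in each of the four nontrivial degrees. For $k \geq 1$ both spaces are simply connected, so Whitehead's theorem upgrades $(\pi, f)$ to a homotopy equivalence $ST^*S^{2k+1} \simeq S^{2k+1} \times S^{2k}$; the case $k = 0$ is trivial since $ST^*S^1 \cong S^1 \sqcup S^1$ admits an obvious map to $S^0$ realising the required isomorphism.

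The main obstacle, and indeed the real content of the lemma, is the final step: deducing from this homotopy equivalence that $2k+1 \in \{1, 3, 7\}$. For this I would invoke \cite[Theorem~2]{Milnor&Spanier}, whose proof rests on Adams's Hopf invariant one theorem, to conclude that $k \in \{0, 1, 3\}$ and thereby finish the argument.
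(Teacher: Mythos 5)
Your proposal is correct and takes essentially the same approach as the paper. Both arguments hinge on the observation that the map with components $\pi$ and $f$ is a homology isomorphism, hence (by Whitehead) a homotopy equivalence, and that Milnor--Spanier's Theorem 2 then pins down the allowed values of $k$. One small point worth making explicit: Milnor--Spanier's theorem concerns \emph{fibre homotopy} triviality of $\pi$, not a bare homotopy equivalence of total spaces, so before invoking it you should record that $(\pi,f)$ is fibre-preserving over $S^{2k+1}$ and therefore (by Dold's theorem) a fibre homotopy trivialisation of $\pi$; the paper notes exactly this (``this gives a fibre homotopy trivialisation of $\pi$''). Your use of parallelizability for the forward implication is a fine substitute for reading that direction off of Milnor--Spanier, and for odd-dimensional spheres the two are in any case equivalent.
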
 

\begin{proof}
Let $\pi \colon ST^*S^{2k{+}1} \to S^{2k{+}1}$ be the bundle projection
of the unit cotangent bundle of $S^{2k{+}1}$. 
If we consider $\pi$ merely as a spherical fibration,
then a map $f$ as in the statement of the lemma exists if and only if $\pi$ is trivial as
a spherical fibration.  This is because the product map
\[ f \times \pi \colon ST^*S^{2k{+}1} \to S^{2k} \times S^{2k{+}1} \]
is a homology isomorphism, and so by Whitehead's theorem a homotopy
equivalence, and this gives a fibre homotopy trivialisation of $\pi$.
By \cite[Theorem~2]{Milnor&Spanier}, the bundle projection $\pi$ is
trivial as a spherical fibration if and only if $k = 0, 1$ or $3$.
\end{proof}

The proof of the following proposition uses handle cancelling and the
Whitney trick, which are familiar from the proof of the $h$-cobordism
theorem in higher dimensions. For details concerning these
constructions we refer to \cite{Milnor65, Whitney44}.
\begin{Proposition}\label{prop:filling_cotangent_spheres}
Let $k \geq 1$ and let $M = (ST^*S^{2k{+}1}) \# \Sigma _0$ be the
connected sum of $ST^*S^{2k{+}1}$ with some homotopy sphere $\Sigma
_0$. Choose some almost contact structure $\varphi$ on $M$ and let
$W^{4k+2}$ be the smooth manifold underlying a Stein filling of $(M,
\acs)$.  Then $W$ decomposes as a boundary connected sum
\[ W =W_l\thinspace \natural W_\Sigma,\]
%
where $H_{2k}(W_l) = \Z /l\Z$ and $W_\Sigma$ is a $2k$-connected filling of some homotopy
sphere $\Sigma$. 
Moreover we have the following possibilities for the topology of $W_l$:
\begin{enumerate}

\item If $l > 1$, then $W_l$ has a handle decomposition with precisely
  one handle of index $2k$ and two handles of index $2k{+}1$;

\item If $l = 1$, then $W_1 \cong DT^*S^{2k{+}1}$;

\item If $l = 0$, then we must have $k = 1, 3$ and $W_0 \cong D^{2k{+}2} \times S^{2k}$.

\end{enumerate}
In particular, $ST^*S^{2k{+}1}$ admits a subcritical filling if and only if $k = 1,3$.
\end{Proposition}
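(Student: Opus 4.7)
The plan is to put $W$ into a standard handle form via Eliashberg's $h$-principle, simplify the resulting handle decomposition using Smith normal form and the Whitney trick, and then identify $W = W_l \,\natural\, W_\Sigma$ by splitting off the inessential handles. Invoking Eliashberg's $h$-principle (Theorem~\ref{thm:h-principle}), $W$ admits a handle decomposition with handles of index at most $2k{+}1$; dualising this decomposition shows that the pair $(W, M)$ is $2k$-connected, so $\pi_1(W) = 0$ and $H_j(W) = 0$ for $1 \le j \le 2k{-}1$ (using $\pi_1(ST^*S^{2k{+}1}) = 0$ for $k \ge 1$, preserved under connect sum with $\Sigma_0$). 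Standard cancellation of low-index handles (compatible with the Stein structure by the $h$-principle, and using the Whitney trick, valid since $\dim W = 4k{+}2 \ge 6$) then lets me assume $W$ has handles only of indices $0$, $2k$, and $2k{+}1$.

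Writing $A \colon \Z^b \to \Z^a$ for the boundary map between middle-dimensional handles, the long exact sequence of $(W, M)$ combined with Lefschetz duality identifies $H_{2k}(W) = \coker A$ with a cyclic quotient of $H_{2k}(M) = \Z$, so $H_{2k}(W) \cong \Z/l\Z$ for some $l \ge 0$. Geometric realisation of Smith normal form lets me cancel all handle pairs whose invariant factor equals $1$, leaving: one trivially-attached $2k$-handle if $l = 0$; no $2k$-handles if $l = 1$; and one $2k$-handle linked by a coefficient-$l$ $(2k{+}1)$-handle if $l \ge 2$. Any further $(2k{+}1)$-handles attach along null-homotopic $2k$-spheres in the boundary of the preceding handlebody, and by Haefliger's embedding theorem can be isotoped into a small disc disjoint from the essential handles. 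These extra handles can then be collected into a boundary connected summand $W_\Sigma$ which, by construction, is $2k$-connected and whose boundary $\Sigma$ is a homotopy sphere.

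The identification of $W_l$ proceeds case by case. When $l = 1$, $W_l$ consists of a $0$-handle and one $(2k{+}1)$-handle, hence is a disc bundle over $S^{2k{+}1}$; matching $\partial W_l$ with the almost diffeomorphism class of $ST^*S^{2k{+}1}$ forces $W_l \cong DT^*S^{2k{+}1}$. When $l \ge 2$, an additional $(2k{+}1)$-handle beyond the coefficient-$l$ one is required in order to produce the correct $\partial W_l$ (roughly, the first reduces $H_{2k}$ from $\Z$ to $\Z/l$ and the second corrects the resulting boundary to an almost-diffeomorph of $ST^*S^{2k{+}1}$), giving the asserted two $(2k{+}1)$-handles. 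When $l = 0$, $W_l$ is a $0$-handle plus a trivially-attached $2k$-handle, hence a $(2k{+}2)$-disc bundle over $S^{2k}$ with boundary an $S^{2k{+}1}$-bundle over $S^{2k}$; for this boundary to agree with $ST^*S^{2k{+}1}$ modulo a homotopy sphere, Lemma~\ref{lem:cotangent} forces $k \in \{1, 3\}$, and in these cases the bundle is trivial so $W_l \cong D^{2k{+}2} \times S^{2k}$. The subcritical fillability claim follows immediately, as a subcritical filling has no $(2k{+}1)$-handles, which by the case analysis occurs only when $l = 0$, i.e.\ iff $k = 1, 3$. The main technical obstacle is the $l \ge 2$ case, where one must verify that exactly one additional $(2k{+}1)$-handle suffices to realise the correct $\partial W_l$ and that the resulting $W_l$ is uniquely characterised by $l$; the $l = 0$ case is the second delicate point, requiring the identification of the $S^{2k{+}1}$-bundle structure produced by the $2k$-handle attachment with $ST^*S^{2k{+}1}$ via Lemma~\ref{lem:cotangent}.
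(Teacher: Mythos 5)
The paper's proof takes a different, more direct route to the splitting $W = W_l \natural W_\Sigma$: it first observes (via the long exact sequence of $(W, \partial W)$ together with Lefschetz duality) that the intersection form on a complement $H$ of $\im\bigl(H_{2k{+}1}(\partial W) \to H_{2k{+}1}(W)\bigr)$ is unimodular, picks a symplectic basis of $H$ represented by embedded $(2k{+}1)$-spheres (Whitney trick), and takes $W_\Sigma$ to be a regular neighbourhood of this sphere configuration. Unimodularity is what forces $\partial W_\Sigma$ to be a homotopy sphere. The analysis of $W_l$ is then done by building a handle decomposition \emph{relative to its boundary} $M' = M \# (-\Sigma)$, turning it upside down, and cancelling.

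Your approach via Smith normal form and cancellation starting from the $0$-handle is a genuinely different organisation, but it leaves a real gap at the splitting step: after cancelling all SNF pairs with invariant factor $1$, you propose to collect the remaining ``extra'' $(2k{+}1)$-handles (those with zero boundary map) into $W_\Sigma$ by isotoping their attaching spheres into a small disc. Haefliger's theorem does let you embed these attaching spheres disjointly in a disc, but that alone does not make $W_\Sigma$ a filling of a \emph{homotopy sphere}: the $(2k{+}1)$-handles can still link one another inside the disc, and $\partial W_\Sigma$ is a homotopy sphere if and only if the intersection form of $W_\Sigma$ is unimodular. Nothing in your SNF argument supplies this unimodularity; it is precisely the point that the paper's symplectic-basis construction handles directly. (Equivalently: algebraically trivial boundary in $\coker A$ does not give you control over the self-linking of those handles.)

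Beyond this, the case analysis for $W_l$ is underdeveloped. For $l = 1$ you assert that one $(2k{+}1)$-handle survives, but after cancelling all SNF pairs with invariant factor $1$ your framework produces \emph{no} $2k$-handle and a collection of trivially-bounding $(2k{+}1)$-handles — it is not explained why exactly one of these should be assigned to $W_l$ rather than $W_\Sigma$. The paper avoids this by doing the handle count on $W_l$ \emph{after} the geometric split has been made, working relative to its known boundary $M'$. For the identification $W_1 \cong DT^*S^{2k{+}1}$ you say ``matching $\partial W_l$ forces'' it; the paper's actual argument is that $W_1$ is a linear $D^{2k{+}1}$-bundle over $S^{2k{+}1}$ which must be stably trivial (else $M$ would have non-trivial stable tangent bundle), and over $S^{2k{+}1}$ the only stably trivial options are the trivial bundle and $DT^*S^{2k{+}1}$, with the trivial one ruled out for $k \neq 1, 3$ by Lemma~\ref{lem:cotangent}. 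Your $l \geq 2$ gap you have acknowledged yourself; it would be closed by the paper's relative handle-cancellation argument showing at most one $2k$-handle and at most two $(2k{+}1)$-handles, combined with the constraints $H_{2k}(W_l) = \Z/l\Z$ and $\partial W_l \cong M'$.

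In short: your route through Smith normal form correctly captures the cyclic structure of $H_{2k}(W)$, but the decomposition $W = W_l \natural W_\Sigma$ and the topology of each piece require the geometric unimodularity input and the relative-handle analysis that the paper's proof provides, and these are missing or only gestured at in your sketch.
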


\begin{proof}
First note that $M=\del W$ is $(2k{-}1)$-connected and that $H_*(\del W)
\cong H_*(S^{2k} \times S^{2k{+}1})$.  It follows that any Stein filling
of $M$ is $(2k{-}1)$-connected and hence by the Hurewicz Theorem the map
$\pi_{2k{+}1}(W) \longrightarrow H_{2k{+}1}(W)$ is surjective.  The long
exact sequence of the pair
%
$$ H_{2k{+}1}(\partial W)\stackrel{\iota} \longrightarrow H_{2k{+}1}(W)
\longrightarrow H_{2k{+}1}(W,\partial W)\longrightarrow
H_{2k}(\partial W) \longrightarrow H_{2k}(W) \longrightarrow 0$$
yields that $H_{2k}(W) = \Z / l\Z$ is cyclic and we also know that
$H_{2k{+}1}(W)$ is torsion free since $W$ admits a handle decomposition without handles of index greater than $2k+2$ by assumption. Moreover, the intersection pairing is unimodular on a
complement $H$ to $im(\iota) \subseteq H_{2k{+}1}(W)$.  We let
$\{x_1,\ldots, x_{2r}\}$ be a symplectic basis for $H$ consisting of
primitive elements, which can in turn be represented by spheres. Since
the target is simply connected, we can use the Whitney trick to find
embedded representatives in the interior of $W$. We can furthermore
assume that the geometric intersection numbers of these spheres agree
with their algebraic intersection numbers.

Thus we have a configuration of embedded $(2k{+}1)$-dimensional spheres $\{S_1,\ldots, S_{2r}\}$ 
having a regular neighbourhood $N$ whose boundary is a homotopy sphere $\Sigma$.
It follows that $W$ decomposes as a boundary connected sum
$$W =W_l\thinspace  \natural \thinspace W_\Sigma,$$
where $W_\Sigma \cong N$ is $2k$-connected. This boundary connected sum is obtained by choosing an embedded path $\gamma$ from $\partial N$ to $\partial W$ and removing $N$ along with a tubular neighbourhood of $\gamma$. Note that in this decomposition $\partial W_l = M \# (-\Sigma)$.  Applying Mayer-Vietoris, we conclude that $H_{2k}(W_l) \cong \Z / l\Z$.
This proves the first part of the proposition.

We now consider the topology of $W_l$ and prove the remainder of the
proposition.  $W_l$ is $(2k{-}1)$-connected, hence it follows
by handle cancelling that $W_l$ is obtained by attaching handles of
index at least $2k$ to $ST^*S^{2k{+}1} \# \Sigma_0 \# (-\Sigma)$. Turning this handle
decomposition upside down gives a handle decomposition with handles of
index at most $2k+2$.  Then by further cancellation of handles we can
find a handle decomposition with at most one handle of index $2k$ and
at most two handles of index $2k{+}1$, where we use the fact that $W_l$
is simply connected.  This proves case $(1)$.

If $l = 1$, then the handle decomposition of $W_l$ reduces further to
contain a single $(2k{+}1)$-handle, and $W_1$ is diffeomorphic to a
linear $D^{2k{+}1}$-bundle over $S^{2k{+}1}$.  This bundle must be stably
trivial, otherwise $M$ will have a non{-}trivial stable tangent bundle.
Moreover, by analysing the homotopy long exact sequence of the fibration
\[
SO(2k{+}1) \to SO(2k{+}2) \to S^{2k{+}1},
\]
one sees that any stably trivial bundle over $S^{2k{+}1}$ is either
trivial or isomorphic to the unit tangent bundle of the sphere. If $k
\neq 1, 3$ then $W_1$ cannot be the total space of the trivial bundle,
because this would give rise to the existence of a map $f \colon M \to
S^{2k}$ as in Lemma \ref{lem:cotangent} which is impossible.  On the
other hand, if $k = 1, 3$, then the tangent bundle of $S^{2k{+}1}$ is trivial.
In both cases we conclude that there is a diffeomorphism $W_1 \cong
DT^*S^{2k{+}1}$, which proves case $(2)$.

If $l = 0$, then $H_{2k}(W) = H_{2k}(W_0) =\Z$. 
It follows that the handle
decomposition of $W_0$ has just one $2k$-handle and hence $W_0$ is
diffeomorphic to a linear $D^{2k{+}2}$-bundle over $S^{2k}$.  This
bundle is stable and must be stably trivial, otherwise the tangent bundle of
$M$ would not be stably trivial.  We conclude that $W_0$ is
diffeomorphic to $D^{2k{+}2} \times S^{2k}$.
\end{proof}


\begin{Remark} \label{rem:6d-fillings}
Although not the focus of this work, the topology of Stein fillings,
as opposed to their boundaries, is of independent interest and may be
relevent to certain computations in contact homology.  In the case that $k = 1$, Proposition \ref{prop:filling_cotangent_spheres} determines the
smooth manifolds underlying Stein fillings of almost contact
structures on $S^2 \times S^3$.  In this dimension there are no exotic
$5$-spheres \cite{Kervaire-Milnor63} and by \cite[Theorem
  6.2]{Smale62} the manifold $W_\Sigma$ is diffeomorphic to $\#_r(S^3
\times S^3)-{\rm Int}(D^6)$.  Concerning the manifolds $W_l$, we
conjecture that they are classified up to diffeomorphism by $l$.  If
this is correct, then topological Stein fillings $(W, J)$ of $S^2
\times S^3$ are classified up to stably complex diffeomorphism by
their integral homology groups $H_*(W)$ along with their first Chern class 
$c_1(J) \in H^2(W)$.
\end{Remark}

\begin{Lemma} \label{lem:c_k}
If $k$ is odd, there exist almost contact structures $\varphi$ on
$ST^*S^{2k{+}1}$ with non{-}zero $k^{th}$-Chern class, $0\neq c_k(\varphi)
\in H^{2k}(ST^*S^{2k{+}1}) \cong \Z$.
\end{Lemma}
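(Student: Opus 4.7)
The plan is to find a stable complex structure on $TM$ with non-zero $c_k$ via a rational $K$-theory computation, and then appeal to \cite[Lemma~2.17]{BCS2} to destabilize to an almost contact structure. Throughout I write $M = ST^*S^{2k+1}$.

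First I record the basic topology of $M$. The Gysin sequence of the sphere bundle $S^{2k} \hookrightarrow M \to S^{2k+1}$, together with $e(TS^{2k+1}) = 0$, yields $H^*(M;\Z) \cong H^*(S^{2k} \times S^{2k+1};\Z)$, so $M$ is $(2k-1)$-connected with $H^{2k}(M;\Z) \cong \Z$ and $TM$ is stably trivial. After fixing a stable trivialization of $TM$, stable complex structures on $TM$ (up to homotopy) are in bijection with the elements of $\ker\bigl(r \colon \widetilde K^0(M) \to \widetilde{KO}^0(M)\bigr)$, where $r$ denotes realification.

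The key rational computation: both relevant Atiyah--Hirzebruch spectral sequences collapse over $\Q$, giving $\widetilde K^0(M) \otimes \Q \cong \widetilde H^{\mathrm{even}}(M;\Q)$ and $\widetilde{KO}^0(M) \otimes \Q \cong \bigoplus_{j \geq 1} H^{4j}(M;\Q)$. The rational cohomology of $M$ is concentrated in degrees $0, 2k, 2k{+}1, 4k{+}1$; since $k$ is odd one has $2k \equiv 2 \pmod 4$, so $H^{4j}(M;\Q) = 0$ for every $j \geq 1$. Hence $\widetilde{KO}^0(M) \otimes \Q = 0$ while $\widetilde K^0(M) \otimes \Q \cong \Q$, generated by the $H^{2k}$-summand, and the rational realification map is zero. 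A non-zero integer multiple $\xi$ of a torsion-free generator of $\widetilde K^0(M)$ therefore lies in $\ker(r)$.

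To conclude, since $M$ is $(2k-1)$-connected we have $c_i(\xi) = 0$ for $i < k$, and Newton's identity then gives $c_k(\xi) = (-1)^{k-1}(k-1)! \, \mathrm{ch}_k(\xi) \neq 0$ by construction. After sufficient stabilization $\xi$ is represented by an honest complex vector bundle on $M$ whose realification is stably trivial, so it defines a stable almost contact structure on $M$; by \cite[Lemma~2.17]{BCS2} this destabilizes to an almost contact structure $\varphi$ with $c_k(\varphi) = c_k(\xi) \neq 0$. The main obstacle is the identification of stable complex structures on $TM$ with $\ker(r)$ — which rests on the stable parallelizability of $M$ — together with checking that $c_k$ is preserved under destabilization, the latter being automatic since Chern classes are stable characteristic classes.
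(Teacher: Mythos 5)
Your proof is correct, but it takes a genuinely different route from the paper. The paper works directly with the homotopy groups of $SO/U$ and obstruction theory: it identifies $M^\bullet := M \setminus B^{4k+1} \simeq S^{2k} \vee S^{2k{+}1}$, uses $\pi_{4k}(SO/U) = 0$ (which holds precisely because $k$ is odd) to conclude that the restriction $[M,SO/U] \to [M^\bullet, SO/U]$ is onto, and then uses the non-vanishing of the boundary map $\pi_{2k}(SO/U) \to \pi_{2k-1}(U)$ from the fibration $U \to SO \to SO/U$ (again by Bott, for $k$ odd) to twist the standard structure $\scxs_{can}$ by a class $\psi$ detected by $c_k$ on the fiber sphere. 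You instead work rationally in $K$-theory: the collapse of the rational AHSS gives $\widetilde K^0(M)\otimes\Q \cong \Q$ concentrated in degree $2k$ and $\widetilde{KO}^0(M)\otimes\Q = 0$ (the latter because, for $k$ odd, $H^{4j}(M;\Q)=0$ for $j\geq 1$), so the realification $r$ is rationally zero and $\ker(r)$ has rank one; Newton's identity and the Chern character then produce a class with $c_k \neq 0$. Both arguments hinge on the parity of $k$, but through different manifestations of Bott periodicity: the paper via $\pi_{4k}(SO/U)$ and the boundary map, you via $\widetilde{KO}^0(M)\otimes\Q = 0$. Your argument is more existential (you get \emph{some} class of infinite order in $\ker(r)$ with $c_k\neq 0$), whereas the paper's is slightly more explicit about which $\psi$ is added to $\scxs_{can}$.

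Two small imprecisions worth flagging, neither of which affects validity. First, after fixing a stable trivialization of $TM$, the set of homotopy classes of stable complex structures is $[M, SO/U]$, which \emph{surjects onto} $\ker(r)$ via the fibration sequence $SO/U \to BU \to BSO$ but need not be in bijection with it (the image of $[M,SO]$ may act nontrivially); surjectivity is all you need. Second, "a torsion-free generator of $\widetilde K^0(M)$" should be read as "a generator of the free summand"; the point is simply that since $\widetilde{KO}^0(M)$ has trivial rational part, the image $r(\xi_0)$ of any such generator is torsion, and a suitable multiple lands in $\ker(r)$.
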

 
\begin{proof}
Let $\varphi_{can}$ be the standard almost
contact structure underlying the canonical contact structure
on $M = ST^*S^{2k{+}1}$,
and let $\scxs_{can}$ be the stable complex structure determined by
$\varphi_{can}$.  Since $\scxs_{can}$ extends over $DT^*S^{2k{+}1}$, we
have that $c_k(\varphi_{can}) = c_k(\scxs_{can}) = 0$.  
To find a stable complex structure $\scxs$ with $c_k(\scxs) \neq 0$, we recall that
the group $[M, SO/U]$ acts freely and transitively on the set of homotopy classes of 
stable complex structures on $M$.  Now if we let $M^\bullet := M \setminus B^{4k+1}$
be the manifold obtained by removing a ball, then there is a homotopy equivalence
$M^\bullet \simeq S^{2k} \vee S^{2k+1}$ and hence 
\begin{equation} \label{eq:M_to_SO/U}
[M^\bullet, SO/U] \cong \pi_{2k}(SO/U) \oplus \pi_{2k{+}1}(SO/U).
\end{equation}
Since $k$ is odd, $\pi_{4k}(SO/U) = 0$ \cite{Bott59}, and so there is no obstruction to extending
a map $M^\bullet \to SO/U$ to a map $M \to SO/U$.  Hence the restiction
map $[M, SO/U] \to [M^\bullet, SO/U]$ is onto.
Again using that $k$ is odd, the boundary map, $\pi_{2k}(SO/U) \to \pi_{2k{-}1}(U)$, 
in the homotopy long exact sequence of the fibration
$U \to SO \to SO/U$ is non{-}zero \cite{Bott59}.
Since $\pi_{2k{-}1}(U)$ classifies stable unitary bundles over $S^{2k}$
which are in turn classified by their $k^{th}$ Chern class
\cite[Proposition 9.1]{Husemoller94}, it follows from \eqref{eq:M_to_SO/U} and the discussion above
that we can choose
$\psi \in [M, SO/U]$ such that $\scxs : = \scxs_{can} + \psi$ has
$c_k(\scxs) \neq 0$.  By \cite[Lemma 2.17]{BCS2}, we know that
$\scxs$ destabilizes to an almost contact structure
$\varphi$, which then also has $c_k(\varphi) \neq 0$.
\end{proof} 
 
\begin{Lemma}\label{lem:non_filling_cotangent}
Let $\varphi$ be an almost contact structure on $M = ST^*S^{2k{+}1}$ with $c_k(\varphi) \neq 0$ 
and let $(\Sigma,\varphi_{\Sigma})$ be any almost contact homotopy
sphere. If $k \neq 1, 3$, then neither $(M \# \Sigma, \varphi \# \varphi_{\Sigma})$
nor  $\bigl( -(M \# \Sigma), -(\varphi \# \varphi_{\Sigma}) \bigr)$ 
is Stein fillable.
\end{Lemma}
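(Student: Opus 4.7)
The plan is to argue by contradiction using the structural description of Stein fillings provided by Proposition~\ref{prop:filling_cotangent_spheres}. Suppose that $(W, J)$ is a Stein filling of $(M \# \Sigma, \varphi \# \varphi_\Sigma)$. Since the underlying smooth manifold has the form $ST^*S^{2k+1} \# \Sigma$ with $\Sigma$ a homotopy sphere, the proposition yields a boundary connected sum decomposition $W = W_l \natural W_\Sigma'$, where $W_\Sigma'$ is $2k$-connected and $H_{2k}(W_l) = \Z/l\Z$. The hypothesis $k \neq 1, 3$ excludes case~(3) of the proposition, so only the cases $l = 1$ and $l > 1$ remain.

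The key step is to verify that $H^{2k}(W) = 0$ in each remaining case, which will force $c_k(J) = 0$ and contradict $c_k(\varphi) \neq 0$. Since $W_\Sigma'$ is $2k$-connected, $H^{2k}(W_\Sigma') = 0$. When $l = 1$, Proposition~\ref{prop:filling_cotangent_spheres} identifies $W_1 \cong DT^*S^{2k+1} \simeq S^{2k+1}$, so $H^{2k}(W_1) = 0$. When $l > 1$, the explicit handle decomposition shows $W_l$ is $(2k-1)$-connected with $H_{2k}(W_l) = \Z/l\Z$, and the universal coefficient theorem yields $H^{2k}(W_l) = \mathrm{Hom}(\Z/l\Z, \Z) = 0$. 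Since $W_l \natural W_\Sigma'$ is homotopy equivalent to the wedge $W_l \vee W_\Sigma'$, we conclude $H^{2k}(W) = 0$, hence $c_k(J) = 0$. On the other hand, the Stein filling condition gives $c_k(J)|_{\partial W} = c_k(\varphi \# \varphi_\Sigma)$, and since $H^{2k}(\Sigma) = 0$, this class restricts to $c_k(\varphi) \neq 0 \in H^{2k}(M)$ under the natural projection, giving the desired contradiction.

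For the orientation-reversed case, observe that the fiberwise antipodal involution $(x, v) \mapsto (x, -v)$ on $ST^*S^{2k+1}$ preserves the base orientation of $S^{2k+1}$ but acts on each fiber $S^{2k}$ by the antipodal map of degree $-1$, so it reverses the total orientation and yields a diffeomorphism $-ST^*S^{2k+1} \cong ST^*S^{2k+1}$. Hence $-(M \# \Sigma) \cong ST^*S^{2k+1} \# (-\Sigma)$ again fits the hypothesis of Proposition~\ref{prop:filling_cotangent_spheres}, and the $k$-th Chern class of the conjugate structure $-(\varphi \# \varphi_\Sigma)$ differs from $c_k(\varphi \# \varphi_\Sigma)$ only by a sign and hence remains non{-}zero. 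The argument above then applies verbatim to rule out Stein fillings of $(-(M \# \Sigma), -(\varphi \# \varphi_\Sigma))$. The main subtlety lies in the cohomology vanishing $H^{2k}(W) = 0$, which is precisely where the exclusion $k \neq 1, 3$ (ruling out the subcritical filling $D^{2k+2} \times S^{2k}$) is essential.
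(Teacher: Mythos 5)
Your main argument is, after unwinding, the same as the paper's: Proposition~\ref{prop:filling_cotangent_spheres} constrains the homology of a Stein filling $W$ so that $H^{2k}(W)$ is finite unless $l=0$, which the proposition ties to $k=1,3$. You present this by explicitly computing $H^{2k}(W)=0$ for $l\geq 1$ (using that $W_l$ is $(2k{-}1)$-connected so the $\mathrm{Ext}$ term vanishes, and that the boundary connected sum behaves like a wedge in reduced cohomology), then noting this forces $c_k(J)=0$ and hence $c_k(\varphi)=0$, a contradiction. The paper argues contrapositively: $c_k(J)$ restricts to a nonzero element of $H^{2k}(\partial W)\cong\Z$, so $c_k(J)$ has infinite order, forcing $H^{2k}(W)$ infinite, which the proposition then pins to $k=1,3$. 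These are two presentations of the same computation.

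Where you genuinely diverge is the orientation-reversed case. The paper invokes a general duality statement from \cite{BCS2} (Proposition 6.7 there): $(M,\varphi)$ is Stein fillable if and only if $(-M,-\varphi)$ is. You instead exploit a specific symmetry of the unit cotangent bundle: the fiberwise antipodal involution reverses the total orientation because the fiber $S^{2k}$ is even-dimensional, so $-ST^*S^{2k+1}\cong ST^*S^{2k+1}$ and one can rerun the first half of the argument on $ST^*S^{2k+1}\#(-\Sigma)$ with the pushed-forward almost contact structure (whose $c_k$ changes only by $(-1)^k$, hence stays nonzero). This is correct and more self-contained than the paper's citation, but it is also more special: it buys you an elementary, geometric proof at the cost of relying on a symmetry particular to these manifolds, whereas the cited proposition would dispose of the reversed case for any almost contact manifold and is the route that generalizes. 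One small point you should make explicit: in the universal coefficient computation you use, the vanishing of $\mathrm{Ext}(H_{2k-1}(W_l),\Z)$ requires noting $H_{2k-1}(W_l)=0$, which follows from $(2k{-}1)$-connectedness; you assert the connectivity but don't flag why it kills the $\mathrm{Ext}$ term.
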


\begin{proof}
Suppose that $(W, J)$ is a Stein filling of 
$(M\#\Sigma,\varphi\#\varphi_{\Sigma})$.
Since $c_k(\varphi\#\varphi_{\Sigma}) = c_k(\varphi)$ is non{-}zero and pulls back from
$c_k(J) \in H^{2k}(W)$, we conclude that $H^{2k}(W)$ is infinite.
Now by Proposition \ref{prop:filling_cotangent_spheres}, this can only happen if
$k = 1, 3$.  Since we assumed $k \neq 1, 3$, no such Stein filling $(W, J)$ can exist.
For the reversed orientation, we use \cite[Propostion 6.7]{BCS2} which states that
$(M \# \Sigma, \varphi \# \varphi_{\Sigma})$ is Stein fillable if and only if 
$\bigl( -(M \# \Sigma), -(\varphi \# \varphi_{\Sigma}) \bigr)$ is Stein fillable.
\end{proof}

\begin{proof}[Proof of Theorem \ref{thm:mainconnsum}]
We let $k \geq 5$ be odd and set $(M,\varphi) = (ST^*S^{2k{+}1},\varphi)$, where we have $c_k(\varphi) \neq 0$.
By Lemma \ref{lem:c_k} such almost contact structures always exist. 
Now, by Lemma \ref{lem:non_filling_cotangent}, for any almost contact homotopy sphere
$(\Sigma, \varphi_{\Sigma})$ neither 
$(M \# \Sigma, \varphi \# \varphi_{\Sigma})$
nor  $\bigl( -(M \# \Sigma), -(\varphi \# \varphi_{\Sigma}) \bigr)$ 
is Stein fillable.

On the other hand, we let $M^{\bullet} = M \setminus B^{4k{+}1}$ 
and we let $\varphi^{\bullet}$ be the induced almost contact structure on $M^\bullet$. 
We set $W^{4k+2} := M^\bullet \times [0,1]$,
which has a natural almost complex structure $\varphi^\bullet \times [0, 1]$ induced by
$\varphi^{\bullet}$. Moreover, the smoothened boundary of $W$ with the
induced almost contact structure is precisely 
$(M,\varphi) \# (-M, -\varphi)$. 
Since $M$ is $(2k{-}1)$-connected, it follows from a theorem of Smale \cite[Theorem C]{Smale61}, that $M^\bullet$ admits a handle decomposition with handles of index less than or equal to $2k{+}1$.
Since any handle decomposition on $M^\bullet$ gives rise to one on $M^\bullet \times I$ with handles of the same index, we have that $W = M^\bullet \times [0, 1]$  admits a handle decomposition with handles of index less than or equal to $2k{+}1$. 
Consequently $(W, \varphi^\bullet \times [0, 1])$
is a topological Stein filling of $(M, \varphi) \# (-M, -\varphi)$, 
which is then admits a Stein fillable contact representative 
$\xi _{M\# (-M)}$ by Theorem \ref{thm:h-principle}.
\end{proof}

\begin{Remark}
Notice that the non{-}fillability of $(ST^*S^{2k{+}1}, \acs)$ in Theorem
\ref{thm:mainconnsum} arises from the choice of the almost contact
structure $\acs$, since $ST^*S^{2k{+}1}$ does admit Stein fillable
contact structures.  In \cite{BCS3} we shall prove a stronger version
of Theorem \ref{thm:mainconnsum} which asserts the existence of
$(4k{-}1)$-connected closed smooth $(8k{+}1)$-manifolds $M$, such that $M$
(and $M\# \Sigma$ for any homotopy sphere $\Sigma$)
admits no Stein fillable almost contact structure at all, but $M \# (-M)$ is 
Stein fillable.
We would like to point out that our result is on the almost contact level:
we do not claim that the Stein fillable contact structure $\xi _{M\# (-M)}$ on $M\# (-M)$ 
found in the proof of Theorem~\ref{thm:mainconnsum}
(representing the almost contact structure $\varphi \# (-\varphi )$) 
can be given
as a connected sum 
$\xi _+\# \xi _-$ where $\xi _{\pm}$ is a contact structure on $\pm M$. 
\end{Remark}


We now turn to dimension $5$ and prove Theorem \ref{thm:5fill},
restated below as Theorem \ref{thm:5fill-main}.
Notice that in 
dimension five the connected sum $M_0\# M_1$ determines the diffeomorphism 
type of its components $M_0$ and $M_1$, since there are no exotic $5$-spheres.
The following lemma extends this statement to almost contact $5$-manifolds.

\begin{Lemma} \label{lem:5d-almost-contact-disconnected-sum}
Let $\acs$ be an almost contact stucture on the connected sum of $5$-manifolds
$M_0$ and $M_1$.  Then there are, up to homotopy unique, 
almost contact structures $\acs_0$ on $M_0$ and $\acs_1$ on $M_1$, such that $(M_0 \# M_1, 
\acs) = (M_0 \# M_1, \acs_0 \# \acs_1)$.
\end{Lemma}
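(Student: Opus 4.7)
The plan is to invoke obstruction theory for sections of the bundle over a $5$-manifold with fiber $F = SO(5)/U(2)$, since almost contact structures on a $5$-manifold are precisely such sections. The first step is to identify $F$: via the twistor fibration $\C P^1 \to \C P^3 \to \Ha P^1 = S^4$ together with the transitive action of $SO(5)$ on $\C P^3$ with isotropy $U(2)$ (the inclusion $U(2) \subset SO(4) \subset SO(5)$ that appears in the definition of an almost contact structure), one obtains $F \cong \C P^3$. Combined with the Hopf fibration $S^1 \to S^7 \to \C P^3$, this gives $\pi_i(F) \cong \pi_i(S^7)$ for all $i \geq 3$, whence
\[ \pi_3(F) = \pi_4(F) = \pi_5(F) = 0. \]

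Next I will set up the geometry of the connected sum. Choose embedded $5$-disks $D_i \subset M_i$ and put $M_i^\bullet := M_i \setminus \mathrm{int}(D_i)$, so that $M_0 \# M_1 \cong M_0^\bullet \cup_{S^4} M_1^\bullet$. The given $\acs$ restricts to almost contact structures $\acs_i^\bullet$ on each $M_i^\bullet$. The heart of the proof is to show that each $\acs_i^\bullet$ extends, uniquely up to homotopy, to an almost contact structure $\acs_i$ on $M_i$. The primary obstruction to extending the $F$-section from $\partial D_i = S^4$ across $D_i$ lies in $H^5(D_i, \partial D_i; \pi_4(F)) \cong \pi_4(F) = 0$, so an extension exists; any two extensions differ by a class in $H^5(D_i, \partial D_i; \pi_5(F)) \cong \pi_5(F) = 0$, so $\acs_i$ is well-defined up to homotopy rel $M_i^\bullet$.

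It then remains to check that $\acs_0 \# \acs_1 \simeq \acs$ and that the decomposition is essentially unique. By the standard construction of the almost contact connected sum (cf.\ \cite[p.\,301--302]{Geiges08}), $\acs_0 \# \acs_1$ is obtained from $(M_i, \acs_i)$ by removing small disks on which the structure has been homotoped to the standard model and regluing along the common $S^4$. Since $\acs_i|_{M_i^\bullet} \simeq \acs|_{M_i^\bullet}$ by construction, this recovers $\acs$ up to homotopy. For uniqueness, if $(\acs_0', \acs_1')$ is another such pair, then $\acs_i'|_{M_i^\bullet}$ and $\acs|_{M_i^\bullet}$ are homotopic, and hence so are $\acs_i'|_{M_i^\bullet}$ and $\acs_i|_{M_i^\bullet}$; the vanishing of $\pi_5(F)$ then upgrades this to a homotopy $\acs_i' \simeq \acs_i$ on all of $M_i$.

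The main technical hurdle is the homotopy-theoretic identification of the fiber and the accompanying low-degree homotopy computation; once these are in hand, the remaining obstruction-theoretic book-keeping is routine. An alternative approach avoiding the twistor identification would exploit the long exact sequence of the fibration $U(2) \to SO(5) \to F$ directly, but this requires a more delicate analysis of the relevant maps, in particular $\pi_4(U(2)) \to \pi_4(SO(5))$, which is cleaner to bypass via the $\C P^3$ picture.
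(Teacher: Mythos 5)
Your proof is correct and follows essentially the same route as the paper: split $M_0\#M_1$ into the two punctured halves $M_i^\bullet$, restrict $\acs$, and use obstruction theory for sections of the $SO(5)/U(2)$-bundle to show that the restriction extends uniquely (up to homotopy) over the missing $5$-disk, which comes down to the vanishing of $\pi_4(SO(5)/U(2))$ and $\pi_5(SO(5)/U(2))$. The only difference is that the paper cites \cite{Massey61} for these vanishings, while you derive them directly from the identification $SO(5)/U(2)\cong\C P^3$ and the Hopf fibration $S^1\to S^7\to\C P^3$; that computation is correct and makes the argument self-contained.
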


\begin{proof}
For $i = 0, 1$, let $M_i^\bullet := M_i - {\rm int}(D^5) \subset M_0 \# M_1$ be the 
punctured copy of $M_i$ contained in the connected sum.  We define $\acs_i|_{M_i^\bullet} := \acs|_{M_i^\bullet}$.
It remains to show that there is a unique extension of $\acs_i|_{M_i^\bullet}$ to an almost contact structure 
on $M_i$.  Now the obstruction to extension lies in $\pi_4(SO(5)/U(2))$ and the obstruction to uniqueness 
lies in $\pi_5(SO(5)/U(2))$.  By \cite{Massey61}, we have 
$\pi_4(SO(5)/U(2)) = \pi_5(SO(5)/U(2)) = 0$, which concludes the proof.
\end{proof}
\noindent With the aid of this lemma we have
\begin{Theorem}\label{thm:5fill-main}
Let $(M, \acs) = (M_0 \# M_1, \acs_0 \# \acs_1)$ be a Stein fillable almost contact $5$-manifold.
Assume that either $c_1(\acs) = 0$ or that
\[ c_1(\acs)(\pi_2(M_0)) = c_1(\acs)(\pi_2(M_1))  = \Z = \pi_2(BU).\]
Then both $(M_0, \acs_0)$ and $(M_1, \acs_1)$ are Stein fillable.
\end{Theorem}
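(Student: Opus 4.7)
The strategy is to verify the Stein fillability criterion of Theorem~\ref{thm:stein} for each summand, namely that the obstruction class $[M_i, \bar\zeta_i] \in \Omega_5(B^1_{\zeta_i}; \eta^1_{\zeta_i})$ vanishes for $i = 0,1$, where $\zeta_i$ is the stabilisation of $\acs_i$. By Lemma~\ref{lem:5d-almost-contact-disconnected-sum} the structures $\acs_i$ are well defined up to homotopy, so these classes are intrinsic to $(M_i, \acs_i)$. The plan is to show that in each of the two cases the complex normal $1$-type of $(M_i, \acs_i)$ is sufficiently simple that the obstruction class lies in a bordism group that vanishes.

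The first step is to transfer the Chern class information from $M$ to the summands. Writing $M_j^\bullet = M_j \setminus \mathring{D}^5$, Mayer--Vietoris applied to $M_0 \# M_1 = M_0^\bullet \cup_{S^4} M_1^\bullet$ yields a canonical splitting
\[ H^2(M_0 \# M_1) \cong H^2(M_0) \oplus H^2(M_1) \]
carrying $c_1(\acs)$ to $(c_1(\acs_0), c_1(\acs_1))$. Moreover, any class in $\pi_2(M_i)$ can be represented by a map whose image lies in $M_i^\bullet \subset M_0 \# M_1$, so the evaluation of $c_1(\acs)$ on the image of $\pi_2(M_i)$ in $\pi_2(M_0 \# M_1)$ coincides with the evaluation of $c_1(\acs_i)$ on $\pi_2(M_i)$. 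Thus Case~A forces $c_1(\acs_i) = 0$ and Case~B forces $c_1(\acs_i)(\pi_2(M_i)) = \Z$ for each $i$.

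Assuming that $M_0 \# M_1$ (and hence each $M_i$) is simply connected, I would then read off the normal $1$-types. In Case~A the vanishing of $c_1(\acs_i)$ allows $\zeta_i$ to factor through $BSU \to BU$ (compare \cite[Lemma 2.22(ii), Lemma 2.23]{BCS2}); by the universal property of the normal $1$-type this identifies $(B^1_{\zeta_i}, \eta^1_{\zeta_i})$ with $(BSU, BSU \to BU)$, and the obstruction class lies in $\Omega_5^{SU}$, which vanishes by Stong~\cite[p.~248]{Stong1}. In Case~B the primitivity of $c_1(\acs_i)$ makes $\zeta_i \colon M_i \to BU$ itself a $2$-equivalence, so the normal $1$-type is the identity of $BU$, and the obstruction lies in $\Omega_5^U$, which vanishes by \cite[p.~117]{Stong1}. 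In either case Theorem~\ref{thm:stein} yields Stein fillability of $(M_i, \acs_i)$.

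The principal obstacle is the case when the fundamental groups of the summands are non-trivial; the relevant bordism groups then involve $B\pi_1(M_i)$ and are not automatically zero. In this situation my plan is to work directly with the vanishing class $[M_0 \# M_1, \bar\zeta] = 0$ supplied by the hypothesis and to transfer this vanishing to each summand via Theorem~\ref{thm:surgery_theorem_alternative} applied to maps of normal $1$-types induced by the inclusions $M_i^\bullet \hookrightarrow M_0 \# M_1$ and $M_i^\bullet \hookrightarrow M_i$. The technical heart of the argument is to choose these maps so that they are genuine maps of complex normal $1$-types and so that the Chern-class hypothesis makes the resulting pushforward of $[M_i, \bar\zeta_i]$ vanish for bordism-theoretic reasons.
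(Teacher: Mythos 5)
Your argument is correct but only in the simply connected case, and you explicitly flag that you do not have a proof in general.  In fact, when $M_0$ and $M_1$ are simply connected the computation you propose ($\Omega_5^{SU}=0$ or $\Omega_5^U=0$) shows that \emph{every} almost contact $5$-manifold meeting the Chern class hypothesis is Stein fillable, without any reference to the fillability of $M_0\#M_1$; the theorem has no content there.  All the substance lies in the case $\pi_1(M_i)\neq 0$, which is exactly where your proposal trails off into a sketch.

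The ``plan'' you outline for the non--simply connected case --- invoking Theorem~\ref{thm:surgery_theorem_alternative} and ``maps of normal $1$-types induced by the inclusions $M_i^\bullet\hookrightarrow M_0\#M_1$'' --- does not contain the key idea, and as stated it does not obviously close the gap.  Theorem~\ref{thm:surgery_theorem_alternative} tells you when a topological Stein cobordism exists; it does not by itself let you ``transfer vanishing'' of an obstruction class from $M_0\#M_1$ to $M_1$, which is a statement in $\Omega_5^G\bigl(K(\pi_1(M_1),1)\bigr)$ where the bordism group is generically nonzero.  The paper's argument is concretely different: one takes $W:=\bigl(M\times[0,1]\bigr)\cup h^5$, attaching a $6$-dimensional $5$-handle to $M\times\{1\}$ along the connect-sum locus $S^4$ to produce a bordism from $M$ to $M_0\sqcup M_1$; this is made stably complex using $\pi_4(SO/SU)=0$, and the $1$-smoothing extends.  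One then post-composes with the collapse map $K_0\vee K_1\to K_1$ on the Eilenberg--MacLane factor.  The resulting bordism gives, in $\Omega_5^G(K_1)$ (with $G=SU$ or $U$), the relation $[M,\mathrm{col}\circ\bar\zeta]=[M_0,\bar\zeta_0]+[M_1,\bar\zeta_1]$.  The left side vanishes by hypothesis.  Crucially, the $K_1$-component of $\bar\zeta_0$ is null-homotopic (because $M_0$ maps into $K_0$, which is collapsed), so $[M_0,\bar\zeta_0]$ lies in the image of $\Omega_5^G(\mathrm{pt})=\Omega_5^G=0$; hence $[M_1,\bar\zeta_1]=0$, and by Theorem~\ref{thm:stein} $(M_1,\acs_1)$ is Stein fillable, with the symmetric argument handling $M_0$.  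This $5$-handle-plus-collapse mechanism is the missing ingredient in your proposal.

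Two smaller remarks.  Your Mayer--Vietoris step and the claim that $c_1(\acs)$ evaluated on $\pi_2(M_i)$ equals $c_1(\acs_i)$ evaluated on $\pi_2(M_i)$ are fine, though they are not needed once you have Lemma~\ref{lem:5d-almost-contact-disconnected-sum} and the normal-$1$-type computation from \cite[Lemma~2.13]{BCS2} applied to $M_0\#M_1$ directly.  Also, in Case~B the relevant unobstructedness of the stably complex $5$-handle uses $\pi_4(SO/U)=0$ (rather than $SO/SU$); this holds, so the argument still goes through, but be sure to quote the correct homotopy group for whichever fibre $SO/G$ you are working over.
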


%
%
%
%

\begin{proof}
Let $\scxs, \scxs_0$ and $\scxs_1$ be the stable complex structures determined by $\acs, \acs_0$
and $\acs_1$ respectively.  After stabilizing we have $(M, \scxs) = (M_0, \scxs_0) \# (M_1, \scxs_1)$.
Also, let $K_i = K(\pi_1(M_i), 1)$ so that $K(\pi_1(M), 1) = K_0 \vee K_1$.
Under the assumptions of the proposition, \cite[Lemma~2.13]{BCS2} implies that the complex normal
$1$-type of $(M, \scxs)$ is given by
\[ 
(B^1_\scxs, \eta^1_\scxs) \cong (BSU \times 
\left( K_0 \vee K_1 \right), 
{\rm pr}_{BSU}) 
\]
if $c_1(\acs) = 0$ and
\[ 
(B^1_\scxs, \eta^1_\scxs) \cong (BU \times 
\left( K_0 \vee K_1 \right), 
{\rm pr}_{BU}) 
\]
if $c_1(\acs)(\pi_2(M_0)) =c_1(\acs)(\pi_2(M_1))= \pi_2(BU)  = \mathbb{Z}$. 
Since $\eta^1_\scxs$ is the projection to $BSU$
resp.\ $BU$, there is a canonical isomorphism 
$\Omega_5(B^1_\zeta; \eta^1_\zeta) \cong \Omega_5^G(K_0 \vee K_1)$ for $G = U$ or $SU$,
which we use in the remainder of the proof.

Let us now assume that $c_1(\acs) = 0$.
The argument in the other cases is formally the same, and is given by replacing
the map $BSU \to BU$ by ${\rm Id} \colon BU \to BU$. 
Let $\bscxs \colon M \to B^1_\scxs$ be a $\scxs$-compatible normal $1$-smoothing.
We first consider the product smoothing 
$\bscxs \circ {\rm pr}_M \colon M \times [0, 1] \to M \to B^1_\scxs$ and attach
a 6-dimensional $5$-handle to $M \times \{1\}$ along the connect sum locus
$S^4 \times [0, 1] \subset M = M_0 \# M_1$ to obtain a bordism $W$.  
Since $\pi_4(SO/SU) = 0$, there is
no obstruction to making this a stably complex $5$-handle (see \cite[Section 2.3]{BCS2}),
and indeed there is no obstruction to extending $\bscxs \circ {\rm pr}_M$ to a normal smoothing
$\bscxs'_{W} \colon W \to B^1_\scxs$.
Consider the map $\bscxs_W$ given by taking the composition
of $\bscxs'_{W}$ with the collapsing map induced by the
wedge sum:

\[ W \stackrel{\bscxs'_{W}} \longrightarrow BSU \times (K_0 \vee K_1) \stackrel{{\rm col}} \longrightarrow BSU \times K_1.\]
This is a normal map, and setting $\bscxs_i$ to be the restriction of $\zeta_W$ to $M_i$ we see that the bordism
$(W, \bscxs_W)$ gives the equality
\[ [M, {\rm col} \circ \bscxs] = [M_0, \bscxs_0] + [M_1, \bscxs_1] \in \Omega_5^{SU}(K_1).\]
Now, by Theorem \ref{thm:stein}, $[M, \bscxs] = 0$ since $(M, \acs)$ is Stein fillable, and
consequently the bordism class $[M, {\rm col} \circ \bscxs]$ is trivial too.  Moreover, since
the composition ${\rm pr}_{K_1} \circ \bscxs_0 \colon M_0 \to K_1$ is null-homotopic and the bordism group $\Omega^{SU}_5 = \Omega_5^{U} = 0$ according to \cite[p.\,248]{Stong1}, it follows that $[M_0, \bscxs_0] = 0$. 
Hence the bordism class $[M_1, \bscxs_1]$ is trivial. Since the map
$$\bscxs_1 \colon M_1 \to BSU \times K_1$$
 is a 
$\scxs_1$-compatible normal $1$-smoothing, Theorem \ref{thm:stein} implies that $(M_1, \acs_1)$ is Stein fillable. The same argument \emph{mutatis mutandis} shows that
 $(M_0, \acs_0)$ is Stein fillable as well.
\end{proof}

\begin{Remark}
We point out that in dimension five the method in the proof of 
Theorem \ref{thm:5fill-main} does not ascend to 
give control over contact structures.  That is, if the almost contact manifolds
$(M_0, \acs_0)$ and $(M_1, \acs_1)$ are induced from contact manifolds $(M_0, \xi_0)$
and $(M_1, \xi_1)$, and even if we know that $(M_0 \# M_1, \xi_0 \# \xi_1)$ is 
Stein fillable, then in contrast to the situation in dimension 3,
we cannot conclude that $(M_0, \xi_0)$ and $(M_1, \xi_1)$
are Stein fillable.
%
\end{Remark}
\begin{Remark}
Note that in the proof of Theorem \ref{thm:5fill-main} involved constructing a nullbordism of each component of the connected sum $M_0 \# M_1$ by first adding a $5$-handle and then capping off two of the resulting boundary components. This bordism is thus far from having the correct homotopy type, but applying surgery below the middle dimension as in the proof of Theorem \ref{thm:stein} has the virtue of remedying this.
\end{Remark}
\section{Non{-}fillable almost contact structures on highly connected manifolds} 
\label{sec:nonst-acs}
In dimensions congruent to $7$~mod~$8$, the isomorphism
$\pi_{8k{-}1}(SO/U) \cong \Z_2$ means that there are precisely two homotopy classes of stable almost contact 
structures on $S^{8k{-}1}$.
One of these homotopy classes, denoted $\scxs_{std}$, bounds over $D^{8k}$ and is thus
Stein fillable.  Let us call the other stable almost contact
structure on $S^{8k{-}1}$ \emph{exotic} and denote it by
$\scxs_{ex}$.  By \cite[Theorem~1.3]{BCS2} we know that
$(S^7, \scxs_{ex})$ is Stein fillable.  Indeed, (according to
Theorem~\ref{thm:Yang+} below) the quaternionic projective plane ${\mathbb {HP}}^2$ 
admits no almost complex structure, but if we puncture it, then as the Hopf
$D^4$-bundle over $S^4$ it does. Thus the punctured ${\mathbb {HP}}^2$
provides a filling of $(S^7, \scxs_{ex})$ which admits a
Stein structure, inducing a Stein fillable contact structure on $S^7$
that stabilizes to $\zeta _{ex}$.  In higher dimensions, however, we have the following result (which 
corresponds to Theorem~\ref{thm:wackyspheres} from the Introduction):


\begin{Theorem}\label{thm:S8k{-}1}
The exotic stable complex structure $\zeta _{ex}$ on $S^{8k{-}1}$
cannot be represented by a Stein fillable contact structure once
$k\geq 2$.
\end{Theorem}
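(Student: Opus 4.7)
The plan is to apply Theorem~\ref{thm:stein} and show that the bordism obstruction class $[S^{8k-1}, \bscxs_{ex}]$ is non-zero in its ambient bordism group. Since $S^{8k-1}$ is $(8k{-}2)$-connected and $\pi_{4k-1}(BU) = 0$ by Bott periodicity, the complex normal $(4k{-}2)$-type of $(S^{8k-1}, \scxs_{ex})$ can be identified with $(BU\langle 4k\rangle, \pi_{4k})$ as in Example~\ref{exa:normal-type}. Consequently the obstruction class lives in $\Omega^{U\langle 4k-1\rangle}_{8k-1}$, and it suffices to show this class is non-trivial for every $k \geq 2$.

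I would argue by contradiction: assume the class vanishes and pick a $BU\langle 4k \rangle$-null-cobordism $(W, \scxs_W)$ with $\partial W = S^{8k-1}$ and $\scxs_W|_{\partial W} = \scxs_{ex}$. Applying surgery below the middle dimension as in \cite[Proposition~2.6]{BCS2}, we may assume $W$ is $(4k{-}1)$-connected. Capping off with the standard disc yields a closed smooth $(4k{-}1)$-connected $8k$-manifold $\hat W = W \cup_{S^{8k-1}} D^{8k}$, on which the structure $\scxs_W$ on $W$ and the standard structure on $D^{8k}$ disagree across the gluing sphere by precisely the generator of $\pi_{8k-1}(SO/U) \cong \Z/2$. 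By Theorem~\ref{thm:YangIntro}, $\hat W$ nevertheless admits some stable almost complex structure (automatically in cases (1) and (2), and subject to the image condition in case (3)), so the mismatch between $\scxs_W$ and the existing complex structure on $\hat W$ can be measured as a well-defined defect.

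The main work is then to convert this defect into a number-theoretic contradiction. Pairing the Chern character of a $BU\langle 4k\rangle$-lift with $[\hat W]$ yields a rational number whose denominator is controlled by Bernoulli numbers via the classical Todd/$\hat A$ formulae, and the discrepancy between the two candidate structures across the gluing sphere shifts this rational number by an explicit quantity. Integrality of Chern numbers on closed stable complex manifolds forces this shift to be an integer, which in turn amounts exactly to a divisibility statement for differences of reciprocals of Bernoulli numbers. This is the divisibility property proved by Kellner in the Appendix, underpinning the improvement of Yang's theorem recorded as Theorem~\ref{thm:YangIntro}; it is precisely the assertion that must \emph{fail} in order for the hypothesised null-cobordism to exist.

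The hardest step will be this final integrality/defect computation, since it must be carried out uniformly in both the even and odd parities of $k$, and must also explain why the argument collapses at $k = 1$ (where $\zeta_{ex}$ on $S^7$ is Stein filled by the Hopf disc bundle over $S^4$ and the relevant bordism group $\Omega^{SU}_7$ vanishes). Once the Bernoulli divisibility of the Appendix is invoked to rule out the defect for $k \geq 2$, the null-cobordism cannot exist, $[S^{8k-1}, \bscxs_{ex}] \neq 0$, and Theorem~\ref{thm:stein} delivers the non-fillability of $(S^{8k-1}, \scxs_{ex})$.
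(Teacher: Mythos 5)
Your outline correctly identifies the capping-off construction and the appeal to the improved Yang theorem (Theorem~\ref{thm:YangIntro}), but there is a genuine gap in how you intend to derive the contradiction, and the role you assign to Kellner's Bernoulli divisibility is inverted relative to its actual function in the paper.

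The paper does not pass through the bordism obstruction class of Theorem~\ref{thm:stein} at all. Instead, it argues directly: given \emph{any} Stein filling $(W,J)$ of $S^{8k-1}$, cap off to form $X = W \cup_{\mathrm{Id}} D^{8k}$, invoke Theorem~\ref{thm:YangIntro} to get a stable complex structure $\zeta_X$ on $X$, and then puncture $X$ to obtain a second stable complex structure $\zeta_W$ on $W$ that induces $\zeta_{std}$ on the boundary. The crucial step — which your proposal has no analogue of — is Lemma~\ref{lem:delJ_is_topological}: for $q \equiv 3 \bmod 4$, the stable complex structure induced on $\partial W = S^{2q+1}$ by an almost complex structure on $W$ depends only on the oriented diffeomorphism type of $W$, not on the choice of $J$. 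This lemma is proved by a surgery argument (framed embedded spheres, Whitney trick, Kervaire--Milnor) and is entirely independent of Chern numbers. Once one has it, $S\partial J$ and $\partial\zeta_W$ are forced to agree, and since $\partial\zeta_W = \zeta_{std}$ one concludes $S\partial J = \zeta_{std} \neq \zeta_{ex}$.

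Your substitute for this lemma — measuring the "defect" between $\scxs_W$ and the structure on $\hat W$ by pairing Chern characters against the fundamental class and appealing to integrality — is precisely the hard part, and you explicitly defer it. Moreover, the logical role of Kellner's theorem is not what you describe. Kellner's divisibility is used \emph{inside} the proof of Theorem~\ref{thm:YangIntro} to show that the Bernoulli conditions in Yang's original theorem are automatically satisfied (via the $\widehat A$-genus integrality formula of Wall), so that the existence statement becomes unconditional for odd $k \geq 3$ and conditional only on the image condition for even $k$. It is then invoked \emph{positively}: $\hat W$ admits a stable complex structure. It is not the source of a contradiction with the null-cobordism, and the claim that "it is precisely the assertion that must fail in order for the hypothesised null-cobordism to exist" is backwards. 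Without Lemma~\ref{lem:delJ_is_topological} or a worked-out replacement, the argument does not close.
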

Before giving the proof of this result, we derive
Corollary~\ref{cor:wacky} as a simple consequence.

\begin{proof}[Proof of Corollary~\ref{cor:wacky}]
Let $\varphi$ be an almost contact structure on the 
$(4k{-}2)$-connected oriented $(8k{-}1)$-manifold $M$.
If $(M, \acs)$ is not Stein fillable, we are done.  If $(M, \acs)$ is
Stein fillable, let $\scxs$ be the stable complex structure determined by $\acs$
and observe that the complex normal $(4k{-}2)$-type of $(M, \scxs)$ is $(BU\an{4k}, \pi_{4k})$
from Example \ref{exa:normal-type}, with associated bordism groups $\Omega_*^{U\an{4k{-}1}}$.
Let $\bscxs \colon M \to B^{4k{-}2}_\scxs$ be a
$(4k{-}2)$-smoothing.  
By Theorem \ref{thm:stein} we have $[M, \bscxs]
= 0 \in \Omega_{8k{-}1}^{U\an{4k{-}1}}$.
The connected sum 
$(M, \bscxs \# \bscxs_{ex}) : = (M, \bscxs) \# (S^{8k{-}1}, \bscxs_{ex})$ 
does not change the complex
normal $(4k{-}2)$-type, and
\[ 
[M, \bscxs \# \bscxs_{ex}]  = [M, \bscxs] + [S^{8k{-}1}, \bscxs_{ex}] 
\in  \Omega_{8k{-}1}^{U\an{4k{-}1}}.
\]
%
By Theorem~\ref{thm:S8k{-}1} the stable complex manifold
$(S^{8k{-}1}, \zeta _{ex})$ is not
Stein fillable, and so by Theorem \ref{thm:stein} $[S^{8k{-}1}, \bscxs_{ex}] \neq 0$,
since $(BU\an{4k}, \pi_{4k})$ is the complex normal $(4k{-}2)$-type of $(S^{8k-1}, \bscxs_{ex})$.
It follows that $[M, \bscxs \# \bscxs_{ex}] \neq 0$ and consequently $(M, \scxs \# \scxs_{ex})$ is not Stein fillable by Theorem \ref{thm:stein}, 
since $(BU\an{4k}, \pi_{4k})$ is the complex normal $(4k{-}2)$-type of $(M, \scxs \# \scxs_{ex})$.
\end{proof}

\begin{Remark} \label{rem:Contact_existence}
According to \cite[Proposition 6 (vi)]{Geiges97},
the hypothesis of Corollary \ref{cor:wacky} that 
the $(4k{-}2)$-connected $(8k{-}1)$-manifold $M$ admit
an almost contact structure 
is equivalent to assuming that $\im(\tau_{M*}) \subseteq F_*(\pi_{4k}(BU))$. 
In Theorem~\ref{thm:Yang+}\,(3) below, 
we prove that provided $k\geq2$, the same condition is a necessary and sufficient condition for a $(4k{-}1)$-connected $8k$-manifold  to admit a stable complex structure.
\end{Remark}

\begin{Remark} \label{rem:Geiges-obstruction}
The stable almost contact structures $\scxs$ and $\scxs \# \scxs_{ex}$ appearing 
in the proof of Corollary~\ref{cor:wacky} differ by precisely the 
{\em ``top-dimensional $\Z/2$-obstruction to stable homotopy of almost contact structures''}
identified by Geiges in \cite[Theorem 4 (2b)]{Geiges97}.  

\end{Remark}

\begin{Remark}
By \cite{BEM} the stable almost contact structure found in Corollary~\ref{cor:wacky} (as any stable complex, or even almost contact structure)
can be represented by 
an overtwisted contact structure; according to Theorem \ref{thm:S8k{-}1} the stable almost contact structure $\zeta _{ex}$ on 
$S^{8k-1}$ as well as the stable contact structure found by Corollary~\ref{cor:wacky} 
cannot be represented
by Stein fillable contact structures. It would be interesting to see if these particular stable complex structures
admit fillable or tight (that is, not overtwisted) contact representatives.
 \end{Remark}

\subsection{Almost complex $8k$-manifolds}
\label{subsec:AlmostComplex}
In order to prove Theorem \ref{thm:S8k{-}1}, we first improve a theorem
of Yang \cite{Yang12} determining which smooth closed oriented
$(4k{-}1)$-connected $8k$-manifolds admit stable complex structures.


Let $Y$ be a smooth closed oriented $(4k{-}1)$-connected $8k$-manifold, 
let $\tau_Y \colon Y \to BSO$ classify the stable tangent bundle 
of $Y$ and let
\[ \tau_{Y*} \colon \pi_{4k}(Y) \to \pi_{4k}(BSO) \]
be the induced homomorphism.  
If $Y$ admits a stably complex structure, then $\tau_Y$ factors through $F \colon BU \to BSO$
and in this case ${\im(\tau_{Y*})} \subseteq F_*(\pi_{4k}(BU)) \subseteq \pi_{4k}(BSO)$.
Now let $B_i$ denote the $i^{th}$ Bernoulli number (where we
use the topological indexing and sign conventions, in particular,
$B_1=\frac{1}{6}$ and $B_2=\frac{1}{30}$).  The following theorem
is a straightforward reformulation of (2) and (3) of~\cite[Theorem~1]{Yang12}.
\begin{Theorem}[cf.~{\cite[Theorem 1, (2) \& (3)]{Yang12}}] \label{thm:Yang}
A smooth closed oriented $(4k{-}1)$-connected $8k$-manifold $Y$ with signature
$\sigma_Y$ admits a stable complex structure if and only if,
\begin{enumerate}
\item $k$ is odd and $\left(\frac{B_{2k}+B_{k}}{B_{2k}B_{k}} \cdot
  \frac{1}{2^{4k{-}2}}\right)\sigma_Y \equiv 0~mod~2$, or
\item $k$ is even, $\im(\tau_{Y*}) \subseteq F_*(\pi_{4k}(BU))$ and
  $\left(\frac{B_{2k}-B_{k}}{B_{2k}B_{k}} \cdot \frac{4k}{2^{4k}}\right)\sigma_Y
  \equiv 0~mod~2$.\qed
\end{enumerate}
\end{Theorem}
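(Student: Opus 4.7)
My plan is to reduce the existence question to standard obstruction theory for the forgetful map $F \colon BU \to BSO$, and then to identify the mod-$2$ secondary obstruction with the stated Bernoulli number congruence via Hirzebruch--Riemann--Roch. Since the homotopy fiber of $F$ is $SO/U$, the obstructions to lifting $\tau_Y \colon Y \to BSO$ to $BU$ live in $H^{n+1}(Y;\pi_n(SO/U))$. Because $Y$ is $(4k{-}1)$-connected of dimension $8k$, only $H^{4k}(Y)$ and $H^{8k}(Y)$ survive, so there are at most two obstructions. Using the long exact sequence of $U \to SO \to SO/U$ together with Bott periodicity I would verify that $\pi_{4k{-}1}(SO/U)=0$ when $k$ is odd and $\mathbb{Z}/2$ when $k$ is even, while $\pi_{8k{-}1}(SO/U)=\mathbb{Z}/2$ in both cases. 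The primary obstruction is thus automatic for odd $k$, and for even $k$ matches (by comparison of characteristic classes on the $4k$-skeleton) exactly the stated condition $\im(\tau_{Y*}) \subseteq F_*(\pi_{4k}(BU))$.

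Assuming the primary obstruction vanishes, the remaining secondary obstruction is a $\mathbb{Z}/2$-invariant that I would detect by integrality of the Todd genus. Choose any extension of $\tau_Y$ to a stable complex structure over $Y^{\bullet}=Y\setminus D^{8k}$; this provides classes $c_{2k},c_{4k}$ after capping off the top cell with an arbitrary choice, and the resulting (possibly half-integer valued) ``Todd genus'' of $Y$ is an honest integer precisely when the secondary obstruction vanishes. On a $(4k{-}1)$-connected $8k$-manifold the Hirzebruch $L$-polynomial and the Todd polynomial collapse to linear combinations of $p_k^2[Y]$ and $p_{2k}[Y]$, and from the relations $p_k=2(-1)^k c_{2k}$ and $p_{2k}=c_{2k}^{\,2}+2c_{4k}$ (with $c_{4k}[Y]=\chi(Y)\in\mathbb{Z}$) one obtains, after inverting the linear system, an expression of the form
\[
Td(Y) \;=\; (\text{integer combination of }\chi(Y)) \;+\; \lambda_k\,\sigma_Y,
\]
where $\lambda_k\in\mathbb{Q}$ is determined by the coefficients of the two top-degree polynomials and is therefore expressible in closed form via $B_k$ and $B_{2k}$.

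The heart of the argument is then a Bernoulli number computation: I would read off the coefficients of $p_k^2$ and $p_{2k}$ in both $L_{2k}$ and $\mathrm{Td}_{2k}$ from the standard generating function identities, and check that $2\lambda_k$ simplifies to $\tfrac{B_{2k}+B_k}{B_{2k}B_k}\cdot \tfrac{1}{2^{4k-2}}$ when $k$ is odd and to $\tfrac{B_{2k}-B_k}{B_{2k}B_k}\cdot\tfrac{4k}{2^{4k}}$ when $k$ is even. Integrality of $Td(Y)$ modulo the integer $\chi(Y)$ then becomes exactly the congruence asserted in (1) and (2). For sufficiency, one must produce an actual lift realizing the vanishing of the obstruction cocycle; since $\pi_{8k{-}1}(SO/U)$ is cyclic of order two, any primary lift can be altered over the top cell to flip the secondary obstruction, and the parity condition above shows that such a flip is possible exactly when the congruence holds. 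The main obstacle is the bookkeeping in the Bernoulli computation: matching the signs, the normalizations in the Todd class generating function, and the factors of $2$ coming from $p_k=\pm 2c_{2k}$ must all line up with Yang's explicit constants, and one has to check that the choice of extension over $Y^{\bullet}$ does not affect the final residue, so that $\lambda_k\sigma_Y\bmod\mathbb{Z}$ is a well-defined invariant of $Y$ alone.
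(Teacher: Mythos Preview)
The paper does not prove this statement. Theorem~\ref{thm:Yang} is presented as ``a straightforward reformulation of (2) and (3) of~\cite[Theorem~1]{Yang12}'' and is closed with a \qed\ immediately after the enunciation; no argument is supplied. The paper's own work begins with Theorem~\ref{thm:Yang+}, where Yang's Bernoulli congruences are shown to be automatic using Wall's $\widehat A_{2k}$-formula together with the $2$-adic estimate on $\frac{1}{B_k}-\frac{1}{B_{2k}}$ proved in the Appendix. So there is nothing in the paper to compare your argument against; you are in effect sketching a proof of Yang's theorem itself.

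As such a sketch, your obstruction-theoretic setup is correct: the only relevant coefficient groups are $\pi_{4k-1}(SO/U)$ and $\pi_{8k-1}(SO/U)$, your identification of these with $0$ or $\Z/2$ according to the parity of $k$ is right, and the primary obstruction is exactly the condition $\im(\tau_{Y*})\subseteq F_*(\pi_{4k}(BU))$. The Todd--genus integrality argument also gives the \emph{necessity} direction cleanly.

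There are, however, two genuine problems. First, the claim $c_{4k}[Y]=\chi(Y)$ is false for \emph{stable} complex structures; the identity $c_{\mathrm{top}}=e$ holds only for an honest complex structure on $TY$, not on $TY\oplus\underline{\R}^{2N}$. The correct route is to express $c_{2k}^2[Y]$ and $c_{4k}[Y]$ in terms of the Pontrjagin numbers $p_k^2[Y]$ and $p_{2k}[Y]$ via $p_k=(-1)^k 2c_{2k}$ and $p_{2k}=c_{2k}^2+2c_{4k}$, and then eliminate $p_{2k}[Y]$ using the signature formula. Second, your sufficiency step does not work as written. The sentence ``any primary lift can be altered over the top cell to flip the secondary obstruction'' is not meaningful: the top cell is where the obstruction \emph{lives}, so there is nothing there to alter before one has extended, and for $k$ odd one has $\pi_{4k}(SO/U)=0$, so the primary lift is unique and cannot be varied at all. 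Todd integrality only shows that vanishing of the obstruction implies the congruence; to get the converse you must identify the secondary $\Z/2$-obstruction with the stated characteristic number, typically by checking on generators of the relevant bordism group of $(4k{-}1)$-connected $8k$-manifolds (via Wall's classification) or by an explicit cocycle computation. That identification, together with the Bernoulli bookkeeping you leave as an assertion, is the actual content of Yang's paper.
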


\begin{Remark} \label{rem:Yang_and_us}
Using the Hurewicz isomorphism $\pi_{4k}(Y) \cong H_{4k}(Y)$ and 
the Universal Coefficient Theorem, we regard the homomorphism
$\tau_{Y*} \colon \pi_{4k}(Y) \to \pi_{4k}(BSO) = \Z$ as a cohomology class 
$\tau_{Y*} \in H^{4k}(Y)$, which Yang denotes by $\nu$.
When $k$ is even, $F_*(\pi_{4k}(BU)) \subset \pi_{4k}(BSO)$ is the subgroup of index two 
\cite{Bott59}, and so the condition $\im(\tau_{Y*}) \subseteq F_*(\pi_{4k}(BU))$ is equivalent 
to the condition that $\tau_{Y*}$ vanishes mod~$2$, which is the condition Yang uses.
\end{Remark}

The following result, Theorem \ref{thm:YangIntro} from the Introduction,
simplifies Yang's theorem by removing the assumptions involving Bernoulli numbers from its statement.
\begin{Theorem} \label{thm:Yang+}
A smooth closed oriented $(4k{-}1)$-connected $8k$-manifold\,\,\,$Y$\! admits
a stable almost complex structure if and only if
\begin{enumerate}
\item $k\geq 3$ is odd, or
\item  $k=1$ and the  signature $\sigma_Y$ of $Y$  is even, or
\item $k$ is even and  $\im(\tau_{Y*}) \subseteq F_*(\pi_{4k}(BU))$.
\end{enumerate}
%
\end{Theorem}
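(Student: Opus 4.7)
The approach is to deduce Theorem~\ref{thm:Yang+} directly from Yang's Theorem~\ref{thm:Yang} by eliminating the Bernoulli{-}number parity conditions appearing in cases (1) and (2) of the latter. Rewriting Yang's conditions, for $k$ odd one must show that
\[
\Bigl(\tfrac{1}{B_{k}}+\tfrac{1}{B_{2k}}\Bigr)\tfrac{\sigma_Y}{2^{4k-2}}\equiv 0\pmod 2
\]
is vacuous for $k\geq 3$ and reduces to $\sigma_Y$ being even for $k=1$; while for $k$ even, under the hypothesis $\im(\tau_{Y*})\subseteq F_*(\pi_{4k}(BU))$, one must show that
\[
\Bigl(\tfrac{1}{B_{k}}-\tfrac{1}{B_{2k}}\Bigr)\tfrac{4k\,\sigma_Y}{2^{4k}}\equiv 0\pmod 2
\]
holds automatically.

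The case $k=1$ is an immediate calculation: $\tfrac{1}{B_1}+\tfrac{1}{B_2}=6+30=36$, so Yang's coefficient equals $36/4=9$, and $9\sigma_Y$ is even precisely when $\sigma_Y$ is even. This yields case~(2) of Theorem~\ref{thm:Yang+}.

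For the remaining cases $k\geq 2$, I plan to rewrite $\sigma_Y$ in terms of integer Pontryagin numbers and then invoke the number theoretic input of the Appendix. Since $Y$ is $(4k-1)$-connected, the only potentially non{-}vanishing Pontryagin classes are $p_k(Y)$ and $p_{2k}(Y)$, and the Hirzebruch signature theorem specialises to
\[
\sigma_Y \;=\; s_{2k}\,p_{2k}[Y]\;+\;t_k\,p_k^2[Y],
\]
where $s_{2k}=\tfrac{2^{4k}(2^{4k-1}-1)B_{2k}}{(4k)!}$ and $t_k$ is the coefficient of $p_k^2$ in the $L$-genus $L_{2k}$, a rational multiple of $B_k$. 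Both $p_{2k}[Y]$ and $p_k^2[Y]$ are integers. Moreover, since $Y$ is $(4k-1)$-connected, $\tau_{Y*}$ corresponds via Poincar\'e duality and universal coefficients to a class $\alpha\in H^{4k}(Y)$, and one has $p_k(Y)=a_k\cdot\alpha$ where $a_k\in\{(2k-1)!,\,2(2k-1)!\}$ records the value of $p_k$ on the generator of $\pi_{4k}(BSO)$. Consequently $p_k^2[Y]$ is divisible by $a_k^2$; and when $k$ is even with $\im(\tau_{Y*})\subseteq F_*(\pi_{4k}(BU))$, the class $\alpha$ is itself divisible by $2$, contributing a further factor of $4$.

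Substituting these expressions into Yang's parity conditions and clearing the explicit powers of $2$, factorials, and $a_k^2$, the condition in case~(1), respectively case~(3), reduces to a purely arithmetic divisibility statement about the numerator of $\tfrac{1}{B_k}+\tfrac{1}{B_{2k}}$, respectively $\tfrac{1}{B_{2k}}-\tfrac{1}{B_k}$. This is the main obstacle, and it is precisely the content of the Appendix by Bernd Kellner, which establishes the required divisibility property for sums and differences of reciprocals of Bernoulli numbers. Granting these results, Yang's parity conditions hold automatically in the relevant ranges, yielding cases~(1) and (3); combined with the direct verification in case~(2), this completes the deduction of Theorem~\ref{thm:Yang+}.
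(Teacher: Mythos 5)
Your plan correctly identifies the strategy (eliminate the Bernoulli parity conditions from Yang's theorem) and correctly disposes of $k=1$ by the computation $9\sigma_Y$. But the route you describe for $k \geq 2$ has a genuine gap. Substituting the Hirzebruch signature formula $\sigma_Y = s_{2k}\,p_{2k}[Y] + t_k\,p_k^2[Y]$ into Yang's condition replaces $\sigma_Y$ by a combination of two Pontryagin numbers, and you constrain only one of them, $p_k^2[Y]$, via the Bott divisibility of $p_k$ on $\pi_{4k}(BSO)$. You place no constraint on $p_{2k}[Y]$, which after substitution remains a free integer, so no amount of ``clearing powers of $2$, factorials, and $a_k^2$'' can make the condition automatic. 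The missing ingredient is a \emph{second} integrality input. Since $Y$ is $(4k{-}1)$-connected with $k\geq 1$ it is spin, and $\wh A_{2k}(Y)\in\Z$. The paper invokes Wall's closed formula $(15)_m$,
\[
\wh A_{2k}(Y) =
\frac{a_k^2 \cdot 2^{4k-4}\cdot B_k^2 \cdot (2^{2k}-1)^2 \cdot \tau_Y^2 - k^2 \sigma_Y}{2^{4k{+}1} \cdot k^2 \cdot(2^{4k{-}1}-1)},
\]
which packages exactly the elimination of $p_{2k}[Y]$ between the $L$-genus and $\wh A$-genus that you gesture at. Combined with the evenness of the intersection form for $k>2$ (hence $2\mid\tau_Y^2$, and $8\mid\tau_Y^2$ under the extra hypothesis when $k$ is even), the integrality of $\wh A_{2k}(Y)$ forces $2^{4k-3-2j}\mid\sigma_Y$ where $k=2^jc$, $c$ odd. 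It is this divisibility of $\sigma_Y$, not the decomposition into Pontryagin numbers per se, that renders Yang's parity condition automatic; without citing the $\wh A$-integrality your argument cannot close.

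Two secondary points. First, the Appendix (Theorem~\ref{thm:bernoulli}) establishes a $2$-adic lower bound only for the \emph{difference} $\frac{1}{B_k}-\frac{1}{B_{2k}}$ with $k$ even; it is not stated for, nor needed for, the sum. For odd $k$ the required estimate is elementary: writing $B_k=N_k/D_k$ with $N_k$ odd and $D_k=2D'_k$, $D'_k$ odd, one has $\Num\bigl(\tfrac{B_{2k}+B_k}{B_{2k}B_k}\bigr) = D_kN_{2k}+D_{2k}N_k = 2(D'_kN_{2k}+D'_{2k}N_k)$, which is divisible by $4$ since the parenthesised quantity is a sum of two odd numbers. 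Second, your uniform treatment of ``$k\geq 2$'' hides a special case: the evenness of the intersection form cited above is only available for $k>2$, so $k=2$ must be handled separately; the paper does so by noting $B_4=B_2=\tfrac{1}{30}$, so the relevant difference vanishes outright.
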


\begin{Remark} \label{rem:Yang+}
The simplification achieved in moving from Theorem \ref{thm:Yang} to
Theorem \ref{thm:Yang+} is perhaps surprising and rests on Theorem
\ref{thm:bernoulli}, which is a non{-}trivial fact about the differences of
reciprocals of Bernoulli numbers. Theorem~\ref{thm:Yang+} can be interpreted as a
statement about the characteristic numbers (signature and $p_{k}^2$)
of closed $(4k{-}1)$-connected almost complex smooth manifolds and the
bordism groups $\Omega_{8k}^{U\an{4k{-}1}}$.  It would be interesting to
see if there are further connections between number theory and the
characteristic numbers of closed $j$-connected almost complex
$n$-manifolds for other values of $j$ and $n$.
\end{Remark}

\begin{proof}[Proof of Theorem \ref{thm:Yang+}]
For $k=1, 2$, $B_1 = \frac{1}{6}$ and $B_2 = B_4 = \frac{1}{30}$, see
e.g.~\cite[p.\,12]{Hirzebruch66}.  Hence
\[ \frac{B_{2}+B_{1}}{B_{2}B_{1}} 
\cdot \frac{\sigma_Y}{2^{2}} = 9 \sigma_Y \quad \text{and} \quad
\frac{B_{4}-B_{2}}{B_{4}B_{2}}= 0\]
and Theorem \ref{thm:Yang} implies Theorem \ref{thm:Yang+} in these two cases.

The case $k > 2$ will follow from a result of Wall, a fact about Bernoulli numbers (see
Theorem~\ref{thm:bernoulli} in the Appendix) and from the evenness of
$\sigma_Y$.  As in Remark \ref{rem:Yang_and_us}, we regard $\tau_{Y*}$ as 
a cohomology class $\tau_{Y*} \in H^{4k}(Y)$ and define the integer
\[ \tau_Y^2 : = \an{(\tau_{Y*})^2, [Y]}. \]
%
%
Setting $a_k := \frac{3-(-1)^k}{2}$, Wall \cite[$(15)_m$]{Wall62} proved
that the $\wh A$-genus of a $(4k{-}1)$-connected $8k$-manifold $Y$ is given by the following formula:
\begin{equation} \label{eq:A2m}
\wh A_{2k}(Y) = 
\frac{a_k^2 \cdot 2^{4k-4}\cdot B_k^2 \cdot (2^{2k}-1)^2 \cdot \tau_Y^2 - k^2 \sigma_Y}{2^{4k{+}1} \cdot k^2 \cdot(2^{4k{-}1}-1)},
\end{equation}
where $\tau_Y^2 = \chi^2$ in Wall's notation. Now let $B_k = \frac{N_k}{D_k}$ where $N_k$ and $D_k$ denote
respectively the numerator and denominator of $B_k$ expressed in
lowest terms.  We recall from \cite[p.\,284]{Milnor&Stasheff74} that
$N_k$ is odd and that $D_k = 2 D'_k$ where $D'_k$ is odd: in fact
$D_k'$ is the product of odd primes $p$ such that $(p-1)$ divides
$2k$. Writing  $k = 2^j \cdot c$ for $c$ an odd integer,
$j \geq 0$, we rewrite \eqref{eq:A2m} as
\begin{equation} \label{eq:A2m-1}
 \wh A_{2k}(Y) = 
\frac{a_k^2 \cdot 2^{4k-6-2j} \cdot N_k^2 \cdot (2^{2k}-1)^2 \cdot \tau_Y^2 - (D'_k)^2 \cdot c^2 \cdot \sigma_Y}
{2^{4k{+}1} \cdot c^2 \cdot (D'_k)^2 \cdot (2^{4k{-}1}-1)  }. 
\end{equation}
Since $k > 2$ and $Y$ is $(4k{-}1)$-connected, the intersection form of $Y$ is 
even by \cite{Wall62}, and hence $\tau_Y^2$ is an even in integer.  In addition, if
$k$ is even, then for $Y$ to admit a stably complex structure, $\tau_Y$ 
must lie in $2H^{4k}(Y)$, and so $8$ divides $\tau_Y^2$.
Since $\wh A_{2k}(Y)$ is an integer, \eqref{eq:A2m-1} entails that $
2^{4k-3-2j}$ divides $\sigma_Y$.

To apply Theorem~\ref{thm:Yang} when $k$ is odd,
we must show that
\[ \Num \left( \frac{B_{2k}+B_{k}}{B_{2k}B_{k}} \cdot \frac{\sigma_Y}{2^{4k{-}2}}
 \right) \]
is even, where $\Num \left( \frac{a}{b}\right) $ denotes the
numerator of $\frac{a}{b}$, expressed in lowest terms. Since $k$ is odd, $j = 0$, and hence $2^{4k-3}$ divides $\sigma_Y$. 
Furthermore, the largest power of $2$ which divides $\Denom \left(
\frac{\sigma_Y}{2^{4k{-}2}} \right)$ is $2$.  But
\[ \Num \left( \frac{B_{2k}+B_k}{B_{2k}B_k} \right) = 
D_kN_{2k}+D_{2k}N_k = 2(D'_kN_{2k} + D'_{2k}N_k)\]
is divisible by $2^2$, and condition (1) in Theorem~\ref{thm:Yang} holds.

To apply Theorem~\ref{thm:Yang} when 
$k$ is even, we must show that
\[ \Num \left( \frac{B_{2k}-B_{k}}{B_{2k}B_{k}} 
\cdot \frac{4k \cdot \sigma_Y}{2^{4k}}\right) \]
is even. Since $k =
2^j c$ and $2^{4k-3-2j}$ divides $\sigma_Y$, the largest power of $2$
which can divide $\Denom \left( \frac{4k \cdot \sigma_Y}{2^{4k}}
\right)$ is $2^{j+1}$.  By Theorem~\ref{thm:bernoulli},
$2^{j+3}$ divides $\Num \left( \frac{B_{2k}-B_k}{B_{2k}B_k} \right)$,
which ensures that condition (2) in Theorem~\ref{thm:Yang} holds.
\end{proof}

To prove Theorem \ref{thm:S8k{-}1}, we shall need the following result.
\begin{Lemma} \label{lem:delJ_is_topological}
Let $q \equiv 3$~mod~$4$ and let $(W, J)$ be an almost complex $(2q{+}2)$-manifold with $\del W = S^{2q{+}1}$.  
The stable complex structure induced on the boundary, $(S^{2q{+}1}, S\del \acxs)$, is independent
on the choice of the almost complex structure $\acxs$ up to homotopy and depends only on the 
oriented diffeomorphism
type of $W$.
\end{Lemma}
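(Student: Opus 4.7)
The plan is to show that any two almost complex structures $J_0, J_1$ on $W$ induce homotopic stable complex structures on $\del W = S^{2q+1}$; the oriented-diffeomorphism invariance then follows by transporting almost complex structures along the diffeomorphism and applying the first part.

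First I would set up the relevant difference class. Writing $SJ_0, SJ_1$ for the stabilizations as two lifts of $\nu_W\colon W \to BSO$ to $BU$, they differ by a homotopy class $\phi\colon W \to SO/U$, the homotopy fibre of $F\colon BU \to BSO$. The induced stable complex structures on $\del W$ therefore differ by the restriction $\phi|_{\del W} \in \pi_{2q+1}(SO/U)$. Under the hypothesis $q \equiv 3 \pmod 4$ we have $2q+2 = 8k$, and Bott periodicity gives $\pi_{8k-1}(SO/U) = \Z/2$, so it suffices to show that $\phi|_{\del W}$ is trivial.

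The next step is to reduce the problem to a parity question in $\pi_{8k-1}(SO)$. The composition $W \to SO/U \to BU$ classifies a virtual complex bundle $V$ on $W$ whose realification in $BSO$ is trivial. Since $\widetilde{KU}(S^{8k-1}) = \pi_{8k-1}(BU) = 0$, its restriction $V|_{\del W}$ is trivial as a virtual complex bundle. From the fibration $SO \to SO/U \to BU$ the restriction $\phi|_{\del W}\colon S^{8k-1} \to SO/U$ then lifts to a map $\tilde \phi\colon S^{8k-1} \to SO$. The ambiguity in the lift is the image of $r_*\colon \pi_{8k-1}(U) \to \pi_{8k-1}(SO)$, which is the index-$2$ subgroup $2\Z \subset \Z$ in this degree; thus the parity of $[\tilde\phi] \in \pi_{8k-1}(SO) = \Z$ is well-defined and coincides with $\phi|_{\del W} \in \pi_{8k-1}(SO/U) = \Z/2$.

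The main step is to show this parity is $0$. For this I would exploit that $V$ satisfies the identity $c(V) \cdot c(\bar V) = 1$, forced by vanishing of all Pontryagin classes $p_i(V_\R) = 0$, and identify the parity of $[\tilde\phi]$ with a Chern-number obstruction on $W$ via obstruction theory for the fibration $U \to SO \to SO/U$ relative to $\del W$. The top-dimensional obstruction to lifting $\phi$ from $SO/U$ to $SO$ over $W$ relative to $\tilde\phi$ lies in $H^{8k}(W, \del W; \pi_{8k-1}(U)) \cong \Z$; Poincar\'e--Lefschetz duality identifies this integer with an evaluation of Chern classes of $V$ against $[W, \del W]$, and the relations among the $c_i(V)$ implied by $c(V) c(\bar V) = 1$ (for instance, the expression $2c_{4k} = -c_{2k}^2 + 2\sum_{i=1}^{2k-1}(-1)^{i+1}c_i c_{4k-i}$ coming from $p_{2k}=0$, combined with $c_{2k}^2$ itself being manifestly even from lower $p_j=0$ identities) then force the resulting integer to be even.

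The hard part will be precisely this last parity verification — making rigorous the identification of $[\tilde\phi]$ with an obstruction-theoretic characteristic number on $W$ and extracting evenness from the Chern-class identities $c(V)c(\bar V)=1$; both steps use decisively that $q \equiv 3 \pmod 4$ (so that $\dim W = 8k$ and the realification map $r_*$ has index-$2$ image on $\pi_{8k-1}$).
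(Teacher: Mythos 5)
Your proposal takes a genuinely different route from the paper, and the crucial step is left unproven, so as written there is a real gap.

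The paper's proof is surgery-theoretic: it forms the boundary connected sum $W\natural(-W)$ with $J_0\natural(-J_1)$, finds disjoint framed embedded $(q{+}1)$-spheres representing a Lagrangian (anti-diagonal) basis of $H_{q+1}$, uses that $\pi_{q+1}(SO)\to\pi_{q+1}(SO/U)$ is onto and that $\pi_{q+1}(SO(q{+}1))\to\pi_{q+1}(SO)$ is onto (Kervaire) to reframe those spheres so the induced stable complex structures on them vanish, and then performs stably complex surgeries to build an explicit stably complex bordism from $S^{2q+1}$ with $S\del J_0\#(-S\del J_1)$ to the standard sphere. Crucially, the proof begins by assuming $W$ is a topological Stein filling and hence $q$-connected with unimodular intersection form; this hypothesis drives the whole argument even though the statement of the lemma does not make it explicit. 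Your proposal instead reformulates the problem as computing the restriction of the difference class $\phi\colon W\to SO/U$ to $\partial W$, and aims to pin down the parity of a lift $\tilde\phi\in\pi_{8k-1}(SO)$ via a top relative obstruction in $H^{8k}(W,\partial W;\pi_{8k-1}(U))$ and Chern-class identities from $c(V)c(\bar V)=1$.

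The gap: the identification of the parity of $[\tilde\phi]$ with a well-defined integral obstruction class, and the extraction of evenness from $c(V)c(\bar V)=1$, are precisely the content of the lemma and are left as ``the hard part.'' As written, the proposal does not address the lower relative obstructions, notably the one in $H^{4k}(W,\partial W;\pi_{4k-1}(U))\cong H_{4k}(W;\Z)$, which can be nonzero and must be dealt with before a top obstruction is even defined; nor does it explain why the top obstruction, which a priori depends on the choice of lift over the $(8k{-}1)$-skeleton, has a well-defined parity matching that of $\tilde\phi$. Moreover the proposed Chern-class identities only yield useful integrality if one knows $H^{2j}(W)=0$ for $0<j<2k$ and $2k<j<4k$, i.e.\ one must invoke the $q$-connectedness coming from the topological Stein filling hypothesis, which your argument never uses. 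Without that hypothesis the manipulation $2c_{4k}=\pm(c_{2k}^2-\cdots)$ does not isolate $c_{4k}$, and the evenness claim is not justified. So while the obstruction-theoretic framing is a reasonable direction, the proposal as it stands does not constitute a proof, and it is not the route the paper takes.
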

\begin{proof}
Let $(W,J_0)$ and $(W,J_1)$ be two almost complex structures on the same topological Stein filling of $S^{2q{+}1}$, which is then a $q$-connected manifold. By the Hurewicz Theorem and repeated application of the Whitney trick we can find a basis of $H_{q{+}1}(W)$ consisting of primitive elements
$\{ x_1, \dots, x_n \}$ represented by embeddedings $f_i \colon S^{q{+}1} \hookrightarrow W$.  
The embedded spheres $f_i(S^{q{+}1})$
will intersect in the pattern determine by the intersection form of $W$,
which is unimodular, since the boundary of $M$ is a sphere, and is denoted by
\[ \lambda_W \colon H_{q{+}1}(W) \times H_{q{+}1}(W) \to \Z.\]

We now consider the boundary connected sum 
$\big( W\natural(-W), J_0\natural(-J_1) \bigr)$, 
given by reversing the orientation on $(W, J_1)$ and then 
attaching an almost complex $1$-handle.  This manifold is again
$q$-connected. By tubing together the two copies of the embeddings $f_i$,
taking care to reverse the orientation along the tube,
we obtain embeddedings $f_i \# (-f_i) \colon S^{q{+}1} \hookrightarrow W \natural (-W)$,
which represent a basis of the anti-diagonal summand
%
%
\[ L := \left\langle (x_1, -x_1) \, \dots\,, (x_n, -x_n) \right\rangle
\subset H_{q{+}1}(W\natural(-W)) = H_{q+1}(W) \oplus H_{q+1}(W). \]
We claim that this basis for $L$ is represented by disjoint embedded $(q{+}1)$-spheres
with trivial normal bundle.  The normal bundle of each embedding $f_i \# (-f_i)$,
$i = 1, \dots, n$, is isomorphic to the Whitney
sum of the normal bundle of $f_i$ and its inverse, and so is trivial.
Moreover, the intersection form of $W \natural (-W)$ is the orthogonal sum 
$\lambda_W \oplus -\lambda_W$, and so for all pairs $(i, j)$, 
the algebraic intersection of $(x_i, -x_i)$ with $(x_j, -x_j)$ is given by
\[ \lambda_{W \natural (-W)}((x_i, -x_i), (x_j, -x_j))
= \lambda_W(x_i, x_j) - \lambda_W(-x_i, -x_j) = 0 .\]
By further applications of the Whitney trick, we arrive at the required
disjoint embeddings 
$\phi_i \colon D^{q{+}1} \times S^{q{+}1} \hookrightarrow W \natural (-W)$ 
representing the given basis of $L$.

The stable complex structure induced on 
$\phi_i(D^{q{+}1} \times S^{q{+}1})$ by 
$\scxs = S(J_0 \natural (-J_1))$ may be regarded as an element 
$\scxs_i \in \pi_{q{+}1}(SO/U)$ and since $q \equiv 3$~mod~$4$, each $\scxs_i$ lies
in the image of the map $\pi_{q{+}1}(SO) \to \pi_{q{+}1}(SO/U)$.  Moreover, the condition that $q
\equiv 3$~mod~$4$ implies that the stabilization homomorphism
$\pi_{q{+}1}(SO(q{+}1)) \to \pi_{q{+}1}(SO)$ is onto by \cite{Kervaire60},
and hence we may reframe our embeddings to obtain new embeddings $\bar
\phi_i$ so that each $\scxs_i$ is trivial.  It follows that there is no
obstruction to extending the stable complex structure induced by 
$J_0 \natural (-J_1)$ on $W \natural (-W)$ over a handle attachment along $\bar
\phi_i$.  That is, we may perform stably complex surgeries on the
embeddings $\bar \phi_i$: see \cite[Section 2.3]{BCS1}.
The trace of these surgeries is a stably complex bordism, relative to
the boundary, to a simply connected homology ball, which is in turn a
topological ball: see \cite[Lemma 7.1]{Kervaire-Milnor63}.
Moreover, the stable almost complex structure on the
boundary is equal to the stabilization of $\del \acxs_0 \# (-\del
\acxs_1)$. It follows that $S(\del \acxs_0 \#(-\del \acxs_1)) $ is the
standard stable complex structure and thus that $S\del \acxs_0 = S\del
\acxs_1$.
\end{proof}	

\begin{proof}[Proof of Theorem \ref{thm:S8k{-}1}]
Suppose that $k  \geq2$.
Let $(W, J)$ be a Stein filling with boundary $S^{8k{-}1}$, and
consider  the smooth closed
oriented manifold $X$
obtained by adding the $8k$-disc to $W$ via the identity map:
\[ X : = W \cup_{\rm Id} D^{8k}.  \]
We consider the manifold $X = W \cup_{\Id}
D^{8k}$. Note that since $W$ admits an almost complex structure
$\acxs$ by hypothesis, we have $\im(\tau_{X*}) = \im(\tau_{W*})
\subset F_*(\pi_{4k}(BU))$.  It follows from Theorem \ref{thm:Yang+} that $X$ also admits a stable complex
structure $\scxs_X$.

Now take the resulting stably complex manifold $(X, \scxs_X)$ and
remove a small open disc.  The outcome is a smooth oriented manifold
diffeomorphic to $W$ with an induced stable complex structure
$\scxs_W$.  Since $\scxs_W$ extends to $X$, we conclude that the
induced stable complex structure $\del \scxs_W$ on $S^{8k{-}1}$ is
homotopic to $\scxs_0$.  Now by Lemma~\ref{lem:delJ_is_topological}, 
the stable complex structures $S \del J$ and $\del \scxs_W$ are
homotopic and hence $S \del J$ is homotopic to $\scxs_0$.  This shows
that only the standard stable complex structure on $S^{8k{-}1}$ admits a
Stein filling, which proves Theorem \ref{thm:S8k{-}1}.
\end{proof}


\subsection{A description of $(S^{8k{-}1}, \acs_{ex})$ } \label{subsec:explicit}
In this subsection we give an explicit description of an almost contact structure
$\acs_{ex}$ on $S^{8k{-}1}$ which stabilizes to $(S^{8k{-}1}, \scxs_{ex})$ when $k \geq 2$. 
Recall that $\pi_{8k{-}1}(SO) \to \pi_{8k{-}1}(SO/U)$ is onto,
and that by \cite{Kervaire60}, for $k \geq 2$,
the stabilization homomorphism
\[ \pi_{8k{-}1}(SO(8k{-}2)) \to \pi_{8k{-}1}(SO) \]
is also onto.  Let $f \colon (D^{8k{-}1}, S^{8k{-}2}) \to (SO(8k{-}2),
\Id)$ be a smooth map representing a class $[f] \in
\pi_{8k{-}1}(SO(8k{-}2))$ where $[f]$ stabilizes to a generator of
$\pi_{8k{-}1}(SO)$.  Let $\xi_{std} \subset TS^{8k{-}1}$ be the
oriented hyperplane distribution given by the standard contact
structure on $S^{8k{-}1}$ and let $J_{std}$ be the complex structure
on $\xi_{std}$ induced by the choice of a contact form.  We observe
that we can use $f$ to define a vector bundle automorphism
\[ \alpha_f \colon \xi_{std} \cong \xi_{std} \]
where $\alpha_f$ is the identity on all fibres outside a small 
$(8k{-}1)$-disc $D \subset S^{8k{-}1}$ and on $TS^{8k{-}1}|_D \cong D \times \R^{8k{-}1}$
we use $f$ to twist $\xi_{std}$ in the obvious way.  We can then use $\alpha_f$ to pull-back
the complex structure $J_{std}$ on $\xi_{std}$ and obtain $\alpha_f^*(J_{std})$.  Clearly
$(\xi_{std}, J_{std})$ and $(\xi_{std}, \alpha_f^*(J_{std}))$ are isomorphic complex vector bundles but
since $\alpha_f$ is not homotopic to a unitary automorphism of $(\xi_{std}, J_{std})$, 
it follows
that $(\xi_{std}, J_{std})$ and $(\xi_{std}, \alpha_f^*(J_{std}))$ are not homotopic as complex structures on $\xi_{std}$. 
Indeed, even after stabilization $\alpha_f$ is not homotopic to a unitary automorphism
and so the almost contact structure 
\begin{equation} \label{eq:ex}
\varphi_{ex} := (\xi_{std} \subset TS^{8k{-}1}, \alpha_f^*(J_{std})) 
\end{equation}
stabilizes to the stable complex structure on $S^{8k{-}1}$ given by acting on $\scxs_{std}$ with the generator of $\pi_{8k{-}1}(SO/U) \cong \Z/2$.  Hence we have proven

\begin{Lemma} \label{lem:zeta_ex}
For $k \geq 2$, the almost contact structure $(S^{8k{-}1}, \varphi_{ex})$ of \eqref{eq:ex}
stabilizes to the stable complex structure $(S^{8k{-}1}, \zeta_{ex})$. \qed
\end{Lemma}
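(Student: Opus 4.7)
The plan is to exploit the fact that stable complex structures on $S^{8k-1}$ lifting the stable normal Gauss map form a torsor over $[S^{8k-1}, SO/U] = \pi_{8k-1}(SO/U) \cong \Z/2$. Since there are exactly two homotopy classes of such structures, namely $\zeta_{std}$ and $\zeta_{ex}$, it suffices to prove that $S\varphi_{ex}$ is \emph{not} homotopic to $\zeta_{std}$.

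First I would make precise how the torsor action is implemented by the construction of $\varphi_{ex}$. The general principle is that if $E$ is an oriented real vector bundle over $X$, $J$ a complex structure on $E$, and $\alpha \colon E \to E$ an oriented bundle automorphism, then the pulled-back complex structure $\alpha^* J = \alpha^{-1} J \alpha$ differs from $J$ by the class of $\alpha$ under the composition $[X, SO] \to [X, SO/U]$ (this is just the identification of $SO/U$ with the space of orientation-compatible complex structures on $\R^\infty$). Applying this to $X = S^{8k-1}$, $E = \xi_{std}$, $J = J_{std}$ and $\alpha = \alpha_f$, the stabilization $S\varphi_{ex}$ differs from $\zeta_{std}$ by the image of $[\alpha_f]$ under the map $\pi_{8k-1}(SO) \to \pi_{8k-1}(SO/U)$. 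Since $\alpha_f$ is supported in a small disc and equals $f$ there, its stabilization is precisely the stabilization of $[f]$, which by hypothesis generates $\pi_{8k-1}(SO)$.

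Next I would use the long exact sequence of the fibration $U \to SO \to SO/U$,
\[
\pi_{8k-1}(SO) \lra \pi_{8k-1}(SO/U) \lra \pi_{8k-2}(U),
\]
to see that the first map is surjective, since $\pi_{8k-2}(U) = 0$ by Bott periodicity (even-degree homotopy of $U$ vanishes). For $k \geq 2$ we have $8k-1 \equiv 7 \pmod{8}$, so $\pi_{8k-1}(SO) \cong \Z$, and the surjection onto $\pi_{8k-1}(SO/U) \cong \Z/2$ must send the generator to the nontrivial element. Hence the image of $[\alpha_f]$ in $\pi_{8k-1}(SO/U)$ is nonzero, which gives $S\varphi_{ex} \neq \zeta_{std}$, and therefore $S\varphi_{ex} = \zeta_{ex}$.

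The main obstacle is the conceptual bookkeeping in the first paragraph, namely verifying that the passage from "acting on $\xi_{std}$ by the gauge transformation $\alpha_f$ and pulling back $J_{std}$" to "the torsor action of $[\alpha_f] \in [S^{8k-1}, SO]$ on the set of stable complex structures via $SO \to SO/U$" really is an equality on the nose, and that stabilization commutes with everything in sight. Once this identification is set up, the remainder is an elementary computation with the homotopy long exact sequence together with the Bott periodicity input $\pi_{8k-2}(U) = 0$ and the Kervaire fact already cited in the excerpt.
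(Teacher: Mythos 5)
Your proposal is correct and follows essentially the same route as the paper: identify the difference between $S\varphi_{ex}$ and $\zeta_{std}$ with the image of the stabilization of $[f]$ under $\pi_{8k-1}(SO)\to\pi_{8k-1}(SO/U)$, and conclude it is the nonzero element. You supply a justification (Bott periodicity via $\pi_{8k-2}(U)=0$ in the long exact sequence of $U\to SO\to SO/U$) for the surjectivity of $\pi_{8k-1}(SO)\to\pi_{8k-1}(SO/U)$, which the paper merely asserts, so your write-up is marginally more self-contained but not a different argument.
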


The examples $(S^{8k{-}1}, \acs_{ex})$ above and also the examples $(STS^{4k{-}1}, \acs)$ 
with $c_k(\varphi) \neq 0$ from Lemma \ref{lem:cotangent} are interesting examples of $(q{-}1)$-connected
$(2q{+}1)$-dimensional almost contact manifolds which are not Stein fillable.
The Stein fillability of such manifolds was studied in \cite{Geiges93, Geiges97}.
In \cite{BCS3} we take up this question in the context of Theorem \ref{thm:stein} 
by systematically studying the bordism groups $\Omega_{2q{+}1}(B^{q{-}1}_\scxs; \eta^{q{-}1}_\scxs)$.

\appendix
\section*{Appendix: $2$-adic valuation of differences of the Bernoulli numbers:\\ 
By Bernd C. Kellner}
\label{sec:bernoulli}
\setcounter{section}{1}
\setcounter{Theorem}{0}

Let $B_k$ be the $k^{th}$ Bernoulli number with topologist's indexing and sign
conventions as in \cite[p.\,12]{Hirzebruch66} and
\cite[Appendix B]{Milnor&Stasheff74}. In particular, we have
\[
  \begin{aligned}
      B_1 & = \frac{1}{6},  & B_2 & = \frac{1}{30},     & B_3 & = \frac{1}{42},
    & B_4 & = \frac{1}{30}, \\
      B_5 & = \frac{5}{66}, & B_6 & = \frac{691}{2730}, & B_7 & = \frac{7}{6},
    & B_8 & = \frac{3617}{510}.
  \end{aligned}
\]
Given a fraction $\frac{a}{b}$, let $\Num \left( \frac{a}{b} \right)$ and
$\Denom \left( \frac{a}{b} \right)$ denote respectively the numerator and the
denominator of $\frac{a}{b}$, when expressed in lowest terms. In this Appendix
we prove the following theorem about Bernoulli numbers, which is the essential
number-theoretic input to the proof of Theorem~\ref{thm:Yang+}.

\begin{Theorem} \label{thm:bernoulli}
Suppose that $k$ is even and write $k = 2^j c$,
where $c$ is odd and $j \geq 1$. Then
\[
  2^{j+3} \mid \Num \left( \frac{B_{2k} - B_k}{B_{2k}B_k} \right).
\]
\end{Theorem}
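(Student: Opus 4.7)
The plan is to convert the divisibility claim into a $2$-adic congruence between $B_k$ and $B_{2k}$, then derive that congruence from a standard Kummer-type statement for the prime $2$.

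First I would write $B_k = N_k / D_k$ in lowest terms. The von Staudt--Clausen theorem gives $D_k = 2 D'_k$ with $D'_k$ odd, and $N_k$ is odd (being coprime to $D_k$). Hence
\[
\frac{B_{2k} - B_k}{B_{2k} B_k} \;=\; \frac{1}{B_k} - \frac{1}{B_{2k}}
\;=\; \frac{D_k N_{2k} - D_{2k} N_k}{N_k N_{2k}}
\;=\; \frac{2(D'_k N_{2k} - D'_{2k} N_k)}{N_k N_{2k}},
\]
with odd denominator. Since $2 B_k = N_k/D'_k$ and $2 B_{2k} = N_{2k}/D'_{2k}$ are $2$-adic units, the divisibility $2^{j+3} \mid \Num(\cdot)$ is equivalent to the $2$-adic congruence
\[
2 B_k \;\equiv\; 2 B_{2k} \pmod{2^{j+2}} \quad \text{in } \Z_2.
\]
Passing to classical indexing (topologist's $B_k$ equals $\pm$ classical $B_{2k}$), the statement becomes $2 B^{\mathrm{cl}}_N \equiv 2 B^{\mathrm{cl}}_M \pmod{2^{j+2}}$ for $N := 2k = 2^{j+1} c$ and $M := 4k = 2N$.

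The main input is the Kubota--Leopoldt $2$-adic $L$-function, or equivalently the $\Z_2$-valued Bernoulli measure $\mu$ on $\Z_2^\times$ with moments
\[
\int_{\Z_2^\times} a^{n-1} \, d\mu(a) \;=\; 2 (1 - 2^{n-1}) B^{\mathrm{cl}}_n \quad \text{for even } n \geq 2
\]
(integrality of the right-hand side is again a consequence of von Staudt--Clausen). For any $a \in \Z_2^\times$, writing $a = \pm (1 + 4v)$ and using that $a^n = \pm \exp(n \log a)$ with $v_2(\log a) \geq 2$, one obtains the pointwise estimate
\[
v_2(a^{N-1} - a^{M-1}) \;\geq\; v_2(N - M) + 2
\]
for even $N, M$. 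Integrating against $\mu$ and using $v_2(N - M) = v_2(N) = j + 1$ yields
\[
v_2\bigl( 2(1 - 2^{N-1}) B^{\mathrm{cl}}_N - 2(1 - 2^{M-1}) B^{\mathrm{cl}}_M \bigr) \;\geq\; j + 3.
\]

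Finally, the Euler factors $(1 - 2^{N-1})$ and $(1 - 2^{M-1})$ are congruent to $1$ modulo $2^{j+3}$ whenever $N - 1 \geq j + 3$, which holds in every case except $j = 1, c = 1$; that exceptional case, $k = 2$, is trivial because $B_2 = B_4 = 1/30$ makes the numerator in question vanish outright. In all other cases the Euler factors drop out, giving $2 B^{\mathrm{cl}}_N \equiv 2 B^{\mathrm{cl}}_M \pmod{2^{j+3}}$, stronger than required. The substantive obstacle is the $2$-adic $L$-function input; the remaining steps are bookkeeping, and an alternative proof (perhaps closer to Kellner's actual argument) would replace the measure-theoretic estimate with a direct Voronoi-style congruence computation for $B_n \pmod{2^a}$.
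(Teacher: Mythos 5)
Your opening reduction is correct and matches the paper's: you pass to $\frac{1}{B_k} - \frac{1}{B_{2k}}$, note that the denominator $N_k N_{2k}$ is odd, and observe the claim is equivalent to $\ord_2(B_{2k} - B_k) \geq j+1$. The gap is in the second half: the measure you invoke does not exist, and the conclusion it would give is false. Any $\Z_2$-valued measure $\mu$ of norm $\leq 1$ on $\Z_2^\times$ has moments $\mu_n := \int a^n\,d\mu$ satisfying $\ord_2(\mu_{N-1} - \mu_{M-1}) \geq \ord_2(N-M)+2$, by the pointwise bound you correctly established. With the claimed moments $\mu_{n-1} = 2(1-2^{n-1})\BN_n$ and $N=2k$, $M=4k$, this forces (after stripping the Euler factors, which are $\equiv 1 \bmod 2^{N-1}$) $\ord_2(\BN_{2k} - \BN_{4k}) \geq j+2$. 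That is one power of $2$ too strong. Take $k=4$, so $j=2$: then $\BN_8 = -\frac{1}{30}$, $\BN_{16} = -\frac{3617}{510}$, and $\BN_8 - \BN_{16} = \frac{120}{17}$, of $2$-adic valuation exactly $3 = j+1$, not $\geq 4$. So no such bounded measure exists, and your claimed strengthening ``$2\BN_{2k} \equiv 2\BN_{4k} \pmod{2^{j+3}}$, stronger than required'' is not true.

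The culprit is the moment formula. The Kubota--Leopoldt interpolation involves $(1-2^{n-1})\BN_n/n$, not $(1-2^{n-1})\BN_n$; the extra division by $n = 2k = 2^{j+1}c$ shifts the $2$-adic valuation by exactly $j+1$, and that is precisely where your phantom power of $2$ disappears. (At $p=2$ the trivial-character component is especially delicate, since $\BN_n/n$ is not even $2$-integral, so the na\"ive ``measure with $B_n$-moments'' picture cannot be correct.) The paper avoids the $L$-function entirely and quotes Carlitz's 1960 congruence on iterated finite differences of Bernoulli numbers, whose $r=1$ case gives $\ord_2(\BN_n - \BN_m) \geq \min(n-2,\ \ord_2(m-n))$ for even $n, m$, hence exactly $\ord_2(\BN_{2k} - \BN_{4k}) \geq j+1$ --- sharp, as the $k=4$ example shows. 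Your closing remark anticipating a ``direct Voronoi-style congruence'' is the right instinct; the $L$-function version as you wrote it overshoots by one and cannot be repaired without reintroducing the $1/n$.
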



Let $p$ be any prime and let $\Z_p$ denote the ring of $p$-adic integers. As usual, define the $p$-adic valuation of $s \in \Z_p$ by $\ord_p s$,
such that $s = u \, p^{\ord_p s}$ where $u \in \Z_p^\times$ is a unit.

We will prove Theorem~\ref{thm:bernoulli} later, since we first need to show
some $p$-adic properties of the Bernoulli numbers. From now on, it is more
convenient to switch to the notation of signed and even{-}indexed Bernoulli
numbers $\BN_n$ as commonly used in number theory. They may be defined by
the generating function
\[
  \frac{t}{e^t-1} = \sum_{n \geq 0} \BN_n \frac{t^n}{n!}, \quad |t| < 2 \pi.
\]

These numbers are rational and $\BN_n = 0$ for odd $n > 1$.
The even{-}indexed Bernoulli numbers alternate in sign, such that
$(-1)^{\frac{n}{2}+1} \BN_n > 0$ for even $n > 0$. Accordingly
\[
  (-1)^{n{+}1} B_n = \BN_{2n}, \quad n \geq 1.
\]

The famous theorem of von Staudt and Clausen
\cite[Theorem~3, p.~233]{Ireland&Rosen90} asserts for even $n \geq 2$, that
\begin{equation} \label{eq:bernoulli_denom}
  \BN_n + \sum_{p-1 \mid n} \frac{1}{p} \in \Z, \quad \text{which implies that}
  \quad \Denom(\BN_n) = \prod_{p-1 \mid n} p.
\end{equation}

Let $n \geq 2$ be even. If $p-1 \mid n$, then we obtain by
\eqref{eq:bernoulli_denom} that
\[
  \BN_n + \frac{1}{p} \in \Z_p,
\]
whereas we already have $\BN_n \in \Z_p$ in the case $p-1 \nmid n$.
Both cases imply that
\[
  \BN_n - \BN_m \in \Z_p,
\]
whenever $n, m \geq 2$ are both even and satisfy $n \equiv m \pmod{p-1}$.
As an easy consequence, iterated finite differences of a sequence of
Bernoulli numbers $\BN_n$ are $p$-integers, assuming that all indices
are even and congruent $\bmod\ p-1$.
Now, we consider the special case $p=2$, where we use the following
more general result of Carlitz.

\begin{Theorem}[{Carlitz \cite[Theorem~7]{Carlitz60}}] \label{thm:carlitz}
If $n \geq 2$ is even, $r \geq 1$, and $2^{e-1} \mid w$ with $e \geq 2$, then
\[
  \sum_{s=0}^r (-1)^s \binom{r}{s} 2 \BN_{n{+}sw} \equiv 0
    \pmod{ \gcd(2^{n{-}1},2^{re+\lambda}) },
\]
where $\lambda = \min (r-1, r-r'+3)$ and $2^{r'} \leq 2r < 2^{r'+1}$. 
%
\end{Theorem}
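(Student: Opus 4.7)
The plan is to prove Carlitz's theorem by interpreting the alternating sum as a finite-difference integral against a $2$-adic measure that represents the sequence $(2\BN_n)_{n \text{ even}}$, and then estimating the $2$-adic valuation of the resulting integrand pointwise.

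First, I note that the factor of $2$ ensures $2$-integrality: by the von Staudt--Clausen congruence \eqref{eq:bernoulli_denom}, for every even $n \geq 2$ the denominator of $\BN_n$ contains the prime $2$ exactly once, so each $2\BN_{n+sw}$ lies in $\Z_2$ and the left-hand side is a $2$-adic integer. Next I would invoke a Mazur-type $2$-adic measure $\mu$ on $\Z_2^\times$ (of the sort underlying the Kubota--Leopoldt $2$-adic zeta function) whose moments reproduce the sequence $2\BN_n$, possibly up to an Euler factor of the form $1 - 2^{n-1}$, which is a unit in $\Z_2$ for $n \geq 2$ and is therefore harmless for the divisibilities in play. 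Under such a representation the alternating sum becomes the integral
\[
  \sum_{s=0}^r (-1)^s \binom{r}{s}\, 2\BN_{n+sw} \;=\; \int x^n (1-x^w)^r \, d\mu(x),
\]
and the task reduces to a pointwise estimate on $\ord_2$ of the integrand.

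The main pointwise estimate divides according to whether $x$ lies in $2\Z_2$ or in $\Z_2^\times$. On $2\Z_2$ the factor $x^n$ already contributes valuation at least $n$, which after accounting for the bounded denominator of the measure produces the term $\gcd$ with $2^{n-1}$ in the final modulus. On $\Z_2^\times$ the factor $x^n$ is a unit, so all content must come from $(1-x^w)^r$. Here I apply the lifting-the-exponent lemma at $p=2$: since $x$ is odd and $w$ is even (as $2^{e-1}\mid w$ with $e \geq 2$), one obtains
\[
  \ord_2(x^w - 1) \;=\; \ord_2(x-1) + \ord_2(x+1) + \ord_2(w) - 1 \;\geq\; e,
\]
giving $\ord_2\!\bigl((1-x^w)^r\bigr) \geq re$ uniformly in $x \in \Z_2^\times$. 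Combining both regions yields the crude form $2^{\min(n-1,\, re)}$ of the claimed congruence.

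The main obstacle is sharpening the exponent from $re$ to $re+\lambda$ with $\lambda = \min(r-1,\, r-r'+3)$. This refinement demands a more delicate analysis of the expansion $(1-x^w)^r = \sum_t (-1)^t \binom{r}{t} x^{tw}$: the binomial coefficients $\binom{r}{t}$ carry additional $2$-adic content governed by Kummer's theorem, with $\ord_2\binom{r}{t}$ equal to the number of base-$2$ carries in the addition $t + (r-t)$. The parameter $r'$ enters naturally because $2^{r'} \leq 2r < 2^{r'+1}$ controls the binary digit structure of $r$, and minimizing $\ord_2\binom{r}{t} + t\cdot\ord_2(x^w-1)$ over $t$ produces precisely the correction $\lambda$. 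The cleanest execution is probably by induction on $r$: the base case $r=1$ follows directly from the lifting-the-exponent estimate and supplies the initial $\lambda$-correction, while the inductive step uses $\Delta^{r+1} = \Delta \circ \Delta^r$ together with careful bookkeeping of how the extra factor $(1-x^w)$ interacts with the accumulated binomial content. The subtlest point will be ensuring that the $\lambda$-correction does not degenerate at the transition values $2r = 2^{r'}$, which is where the two arguments of the minimum in $\lambda$ exchange roles.
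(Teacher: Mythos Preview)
The paper does not prove this theorem at all: it is quoted from Carlitz's 1960 paper and used as a black box to derive Corollary~\ref{cor:carlitz} and then Proposition~\ref{prop:bernoulli_ord2}. Carlitz's own argument is elementary, proceeding via Bernoulli polynomials and binomial identities rather than $p$-adic integration, so your proposal is necessarily a different route.

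A measure-theoretic approach to Kummer-type congruences is legitimate in principle, but your sketch has two concrete gaps. First, there is an internal inconsistency about the domain: you declare $\mu$ to be a Mazur-type measure on $\Z_2^\times$, and then immediately split the pointwise estimate into the cases $x \in 2\Z_2$ and $x \in \Z_2^\times$. If $\mu$ really lives on $\Z_2^\times$ then $x^n$ is always a unit and your ``$2\Z_2$ branch'' does not exist; the $2^{n-1}$ part of the modulus must then come from somewhere else, most plausibly from the Euler-factor discrepancy you dismissed as harmless. Note that the Euler factors $(1 - 2^{n+sw-1})$ vary with $s$, so they cannot simply be pulled outside the alternating sum; the correction terms they produce are individually of valuation at least $n-1$, and tracking them is exactly what would give the $\gcd$ with $2^{n-1}$.

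Second, your mechanism for the $\lambda$-refinement does not hold together. You expand $(1-x^w)^r = \sum_t (-1)^t \binom{r}{t} x^{tw}$ and then propose to minimise $\ord_2\binom{r}{t} + t \cdot \ord_2(x^w - 1)$, but the terms of that expansion involve $x^{tw}$, not $(x^w-1)^t$, so the quantity you write down does not arise from it. The sharp constant $\lambda = \min(r-1,\, r - r' + 3)$ is the real content of Carlitz's result beyond the crude bound $re$ your lifting-the-exponent argument gives, and obtaining it requires a substantially more careful combinatorial analysis than what you have sketched; the inductive scheme you outline would need to be made precise before one could judge whether it actually recovers Carlitz's exponent.
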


Note that the sum above describes an iterated finite difference with
increment $w$. As mentioned above, this sum still lies in $\Z_2$, if we cancel
the factor 2 that occurs. We can rewrite this result as follows.

\begin{Corollary} \label{cor:carlitz}
If $n, w \geq 2$ are both even and $r \geq 1$, then
\[
  \ord_2 \left( \sum_{s=0}^r (-1)^s \binom{r}{s} \BN_{n{+}sw} \right)
    \geq \min( n{-}2, re + \lambda - 1 ),
\]
where $e = 1 + \ord_2 w \geq 2$, $l = \lfloor \log_2 r \rfloor \geq 0$,
and $\lambda = \min( r-1, r-l+2 ) \geq 0$. 
%
\end{Corollary}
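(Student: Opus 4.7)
My plan is to derive the corollary as a direct reformulation of Carlitz's Theorem~\ref{thm:carlitz}: the external factor of $2$ there becomes a shift of $\ord_2$ by one, and Carlitz's auxiliary parameter $r'$ can be identified with $l+1$.

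First, I would verify the hypotheses of Theorem~\ref{thm:carlitz} with $e := 1 + \ord_2 w$. The divisibility $2^{e-1} = 2^{\ord_2 w} \mid w$ is tautological, and since $w$ is assumed even we have $\ord_2 w \geq 1$ and hence $e \geq 2$, as required.

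Next I would identify Carlitz's parameter $r'$, defined by $2^{r'} \leq 2r < 2^{r'+1}$, with $l+1$. A short case distinction handles this: if $r = 2^l$ then $2r = 2^{l+1}$ forces $r' = l+1$, while if $2^l < r < 2^{l+1}$ then $2^{l+1} < 2r < 2^{l+2}$ also forces $r' = l+1$. Substituting $r' = l+1$ into $\lambda_{\mathrm{Car}} = \min(r-1,\, r-r'+3)$ recovers exactly the $\lambda = \min(r-1,\, r-l+2)$ appearing in the corollary.

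Finally, the congruence provided by Theorem~\ref{thm:carlitz} translates, by the definition of $\ord_2$ on $\Q_2$, into
\[
  \ord_2\!\left( 2\sum_{s=0}^r (-1)^s \binom{r}{s} \BN_{n+sw}\right) \;\geq\; \min(n-1,\, re + \lambda).
\]
Since $\ord_2$ is additive on products in $\Q_2^\times$ and $\ord_2(2) = 1$, subtracting one from both sides yields the stated bound
\[
  \ord_2\!\left( \sum_{s=0}^r (-1)^s \binom{r}{s} \BN_{n+sw}\right) \;\geq\; \min(n-2,\, re + \lambda - 1),
\]
where a vanishing sum trivially satisfies the inequality. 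The only real obstacle is bookkeeping: treating the edge case $r = 2^l$ in the identification $r' = l+1$, and making sure the convention for $\ord_2$ on rationals correctly accommodates the possible factor $\tfrac12$ coming from the von Staudt--Clausen denominators of the individual $\BN_{n+sw}$.
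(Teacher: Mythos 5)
Your proposal is correct and follows exactly the reformulation the paper leaves implicit (the paper gives no separate proof, stating only that the Corollary is a rewriting of Carlitz's Theorem~\ref{thm:carlitz}): set $e = 1 + \ord_2 w$, observe $r' = \lfloor \log_2 r\rfloor + 1$, and divide out the factor of $2$, using the paper's earlier remark that the finite difference $\sum_{s}(-1)^s\binom{r}{s}\BN_{n+sw}$ already lies in $\Z_2$ to justify that $\ord_2$ drops by exactly one. Your flagged concern about the von Staudt--Clausen halves is precisely the point the paper addresses with that remark, and it is handled correctly.
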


\begin{Proposition} \label{prop:bernoulli_ord2}
If $m>n \geq 2$ are both even, then
\[
  \ord_2 \left( \frac{1}{\BN_n} - \frac{1}{\BN_m} \right)
    = 2 + \ord_2 ( \BN_n - \BN_m )
    \geq \min ( n, 2 + \ord_2( m - n ) ).
\]
\end{Proposition}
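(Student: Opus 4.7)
The plan is to split the statement into the equality and the inequality. For the equality, rewrite
\[
  \frac{1}{\BN_n} - \frac{1}{\BN_m} = \frac{\BN_m - \BN_n}{\BN_n\BN_m},
\]
so that $\ord_2(1/\BN_n - 1/\BN_m) = \ord_2(\BN_m - \BN_n) - \ord_2(\BN_n) - \ord_2(\BN_m)$. To evaluate the last two terms, invoke the von Staudt–Clausen theorem \eqref{eq:bernoulli_denom}: since $2-1=1$ divides every even index, $\BN_n + \tfrac{1}{2} \in \Z_2$ for every even $n \geq 2$, so $\BN_n = -\tfrac{1}{2} + a$ with $a \in \Z_2$, forcing $\ord_2(\BN_n) = -1$. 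Therefore $\ord_2(\BN_n) + \ord_2(\BN_m) = -2$, which immediately gives the claimed equality $\ord_2(1/\BN_n - 1/\BN_m) = 2 + \ord_2(\BN_n - \BN_m)$.

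For the lower bound, set $w := m - n$; since both $m$ and $n$ are even with $m > n$, $w$ is an even integer with $w \geq 2$. The plan is to apply Corollary \ref{cor:carlitz} with $r = 1$ to the single difference $\BN_n - \BN_{n+w}$. With $r=1$ we have $l = \lfloor \log_2 1 \rfloor = 0$ and $\lambda = \min(r-1, r-l+2) = \min(0, 3) = 0$, while $e = 1 + \ord_2 w \geq 2$. The corollary then yields
\[
  \ord_2(\BN_n - \BN_m) \;\geq\; \min\bigl(n-2,\, re + \lambda - 1\bigr)
  \;=\; \min\bigl(n-2,\, e-1\bigr) \;=\; \min\bigl(n-2,\, \ord_2(m-n)\bigr).
\]
Adding $2$ to both sides and combining with the equality established above gives
\[
  \ord_2\!\left(\frac{1}{\BN_n} - \frac{1}{\BN_m}\right)
  = 2 + \ord_2(\BN_n - \BN_m) \geq \min\bigl(n,\, 2 + \ord_2(m-n)\bigr),
\]
which is the desired conclusion.

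There is no genuine obstacle here: the whole argument reduces to correctly bookkeeping the $2$-adic valuation of a single Bernoulli number via von Staudt–Clausen, and then invoking Carlitz's congruence in its simplest case $r=1$. The only point that requires a moment's care is verifying that the hypotheses of Corollary \ref{cor:carlitz} are met ($n, w$ both even and $\geq 2$, and $r \geq 1$), and tracing through the small integers $l, \lambda, e$ to recover exactly $\ord_2(m-n)$ on the right-hand side.
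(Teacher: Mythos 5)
Your proof is correct and follows essentially the same route as the paper's: the equality is obtained from the von Staudt--Clausen theorem (the paper phrases it as $\ord_2(\Denom(\BN_n\BN_m)) = 2$, you instead compute $\ord_2(\BN_n) = -1$ directly, which is the same observation), and the inequality follows from Corollary~\ref{cor:carlitz} with $r=1$ and $w = m-n$, tracing $e$, $l$, $\lambda$ exactly as the paper does.
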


\begin{proof}
We first observe that
\[
  \ord_2 \left( \frac{1}{\BN_n} - \frac{1}{\BN_m} \right)
    = \ord_2 \left( \frac{\BN_n - \BN_m}{\BN_n \BN_m} \right)
    = 2 + \ord_2 ( \BN_n - \BN_m ),
\]
since by \eqref{eq:bernoulli_denom} we have $\ord_2( \Denom(\BN_n \BN_m) ) = 2$.
Using Corollary~\ref{cor:carlitz} with parameters $r = 1$ and $w = m - n$,
we then infer that
\[
  \ord_2 ( \BN_n - \BN_m ) \geq \min ( n{-}2, \ord_2( m - n ) ),
\]
completing the proof.
\end{proof}

\begin{proof}[Proof of Theorem~\ref{thm:bernoulli}]
Recall that $k = 2^j c$ where $c$ is odd and $j \geq 1$.
Since $k$ is even, the Bernoulli numbers $\BN_{2k}$ and $\BN_{4k}$
have the same sign.
Thus, we can apply Proposition~\ref{prop:bernoulli_ord2} to obtain that
\begin{align*}
  \ord_2 \left( \frac{B_{2k} - B_k}{B_{2k}B_k} \right)
    = 2 + \ord_2 ( \BN_{2k} - \BN_{4k} )
    \geq \min ( 2k, 3 + \ord_2 k ) = 3 + \ord_2 k.
\end{align*}
The last step follows by a simple counting argument.
Since $\ord_2 k = j$, this gives the result.
\end{proof}

In the Summer of 2013, Theorem~\ref{thm:bernoulli} arose as a conjecture.
At the same time, it was independently proved by Karl Dilcher and
the author of this appendix using results of Carlitz.


\end{document}